\newtheorem{thm}{Theorem}[section]
\newtheorem{lemma}[thm]{Lemma}
\newtheorem{prop}[thm]{Proposition}
\newtheorem{coro}[thm]{Corollary}
\theoremstyle{definition}
\newtheorem{definition}[thm]{Definition}
\newtheorem{remark}[thm]{Remark}
\newcommand\ddd{\mathrm{d}}
\newcommand\supp{\mathrm{supp}}
\newcommand\rhu{\rightharpoonup}
\newcommand\rsa{\rightsquigarrow}
\newcommand\bR{\mathbb{R}}
\newcommand\bZ{\mathbb{Z}}
\newcommand\bS{\mathbb{S}}
\def \l {\left}
\def \r {\right}
\begin{document}

\title[Sharp Fourier extension on fractional surfaces]{Sharp Fourier extension on fractional surfaces}

\author{Boning Di$^{1,2}$}
\author{Dunyan Yan$^1$}
\address{$^1$ School of Mathematical Sciences, University of Chinese Academy of Sciences, Beijing, 100049, People's Republic of China}
\address{$^2$ Academy of Mathematics and Systems Science, Chinese Academy of Sciences, Beijing, 100190, People's Republic of China}
\email{diboning18@mails.ucas.ac.cn (ORCID: 0000-0002-2609-4645)}
\email{ydunyan@ucas.ac.cn (ORCID: 0000-0001-7462-3823)}

\date{} 
\thanks{This work was supported by the National Key R\&D Program of China [Grant No. 2023YFC3007303], National Natural Science Foundation of China [Grant Nos. 12271501 \& 12071052] and China Postdoctoral Science Foundation [Grant No. GZB20230812].}

\subjclass[2020]{Primary 42B10; Secondary 42B37, 35B38, 35Q41.}
\keywords{Sharp Fourier restriction theory, extremals, fractional Schr\"odinger equations, Strichartz inequalities.}
\begin{abstract}
We investigate a class of Fourier extension operators on fractional surfaces $(\xi,|\xi|^\alpha)$ with $\alpha\geq 2$. For the corresponding $\alpha$-Strichartz inequalities, we characterize the precompactness of extremal sequences by applying the missing mass method and bilinear restriction theory. Our result is valid in any dimension. In particular for dimension two, our result implies the existence of extremals for $\alpha \in [2,\alpha_0)$ with some $\alpha_0>5$.
\end{abstract}

\maketitle

\section{Introduction}
For $\alpha\geq 2$ and the corresponding $\alpha$-order free Schr\"odinger equation, the classical \textit{$\alpha$-Strichartz inequality} of \cite[Theorem 3.1]{KPV1991} states the following estimate
\begin{equation}\label{E:alpha-Strichartz}
	\l\|[D^{\frac{\alpha-2}{q}}] [e^{it|\nabla|^{\alpha}}] f \r\|_{L_t^q(\bR)L_x^r(\bR^d)} \leq \mathbf{M}_{d,q,\alpha} \|f\|_{L^2(\bR^d)},
\end{equation}
where $2/q+d/r=d/2$ with $q>2$ and
\[\mathbf{M}_{d,q,\alpha}:= \sup\l\{\l\|[D^{\frac{(\alpha-2)}{q}}] [e^{it|\nabla|^{\alpha}}]f \r\|_{L_t^q(\bR)L_x^r(\bR^d)}: \|f\|_{L_x^2(\bR^d)}=1\r\}\]
is the sharp constant, as well as
\[[e^{it|\nabla|^{\alpha}}]f(x):=\mathscr{F}^{-1}e^{it|\xi|^{\alpha}}\mathscr{F}f(x)= \frac{1}{(2\pi)^d} \int_{\bR^d} e^{ix\xi+it|\xi|^{\alpha}} \widehat{f}(\xi) \ddd \xi,\quad [D^s]u(x):=\mathscr{F}^{-1}|\xi|^s\mathscr{F}f(x).\]
Here $\mathscr{F}$ denotes the spatial Fourier transform
\[\mathscr{F}f(\xi):=\widehat{f}(\xi)=\int_{\bR^d} e^{-ix\xi}f(x)\ddd x, \quad x\xi:=x_1\xi_1+x_2\xi_2+\cdots+x_d\xi_d\]
for $x=(x_1,x_2,\ldots, x_d)$ and $\xi=(\xi_1,\xi_2,\ldots,\xi_d)$ in $\bR^d$. Indeed, this $\alpha$-Strichartz inequality \eqref{E:alpha-Strichartz} belongs to the wider class of \textit{Fourier extension estimates} since the space-time Fourier support of $[e^{it|\nabla|^{\alpha}}]f$ is on the \textit{fractional surface} $(\xi,|\xi|^{\alpha}) \subset \bR^{d+1}$. For convenience, we denote that
\[[E_{\alpha}]f(t,x):=[D^{\frac{\alpha-2}{q_0}}][e^{it|\nabla|^{\alpha}}]f(x), \quad q_0:=\frac{2d+4}{d}, \quad \mathbf{M}_{d,\alpha}:= \mathbf{M}_{d,q_0,\alpha}, \quad \mathbf{S}_d^{*}:=\mathbf{M}_{d,2}.\]
Note that $\mathbf{S}_d^{*}$ is the corresponding sharp constant for the classical Schr\"odinger operator $[e^{it\Delta}]$ and the case $\alpha=2$ is also known as the \textit{Stein-Tomas estimate for paraboloid}.

The relevant \textit{symmetries} for these $\alpha$-Strichartz inequalities are the space-time translations and scaling as follows
\[[g_n]f(x):=(h_n)^{d/2} [e^{it_n|\nabla|^{\alpha}}]f (h_nx +x_n), \quad (h_n,x_n,t_n)\in \bR_{+}\times \bR^d \times\bR;\]
and the \textit{associated group} $G$ is defined by
\[G:=\Big\{[g_n]: (h_n,x_n,t_n)\in \bR_{+}\times \bR^d \times\bR\Big\}.\]
Then we say a sequence of functions $(f_n)$ in $L^2(\bR^d)$ is \textit{precompact up to symmetries} if there exists a sequence of symmetries $([g_n])$ in $G$ such that $\l([g_n]f_n\r)$ has convergent subsequence in $L^2(\bR^d)$. Meanwhile, a sequence of functions $(f_n)$ in $L^2(\bR^d)$ is an \textit{extremal sequence} for $\mathbf{M}_{d,q,\alpha}$ if it satisfies
\[\|f_n\|_{L^2(\bR^d)}=1,\quad \lim_{n\to\infty} \l\|[D^{\frac{\alpha-2}{q_0}}] [e^{it|\nabla|^{\alpha}}] f_n\r\|_{L_{t,x}^{q_0}(\bR^{d+1})}=\mathbf{M}_{d,q,\alpha}.\]
Furthermore, a function $f_* \in L^2(\bR^d)$ is called an \textit{extremal} for $\mathbf{M}_{d,q,\alpha}$ if $f_*$ can make the inequality \eqref{E:alpha-Strichartz} an equality and $\|f_{*}\|_{L^2}=1$.

The sharp Fourier restriction theory, equivalently the extremal problems for Strichartz type inequalities, has received much attention recently. Readers are referred to the survey \cite{FO2017} and the references therein for some progress on this theory, see also the recent survey \cite{NOT2022}. We sketch briefly some of the works as follows.
\begin{itemize}
	\item \textbf{Paraboloid:} Kunze \cite{Kunze2003} showed the existence of extremals for the Fourier extension operator on one-dimension parabola $(\xi,\xi^2) \subset \bR^2$  based on an application of \textit{concentration-compactness principle} from Lions \cite{Lions1984a,Lions1984b}; then Foschi \cite{Foschi2007} proved that the only extremals are Gaussians for the case of one-dimension and two-dimension paraboloids by solving some functional equations and investigating some Cauchy-Schwarz inequalities; meanwhile, Hundertmark and Zharnitsky \cite{HZ2006} established the same result independently by giving a new representation for Strichartz integral based on some orthogonal projection operators; later, Shao \cite{Shao2009EJDE} showed the existence of extremals for the case of arbitrary dimensional paraboloids by applying the \textit{profile decomposition} consequence of \cite{BV2007}.
	\item \textbf{Cone:} for the case of $(\xi,|\xi|) \subset \bR^{d+1}$ with low dimensions $d=\{1,2,3\}$, the only extremals are known to be exponentials by the works of Foschi \cite{Foschi2007} and Carneiro \cite{Carneiro2009}; for the case of higher dimensions $d\geq4$, the extremals exist due to the work of Ramos \cite{Ramos2012}.
	\item \textbf{Sphere:} Christ and Shao \cite{CS2012A&P} showed the existence of extremals for the Fourier extension operator on the two-dimension sphere $\bS^2$ by following the general concentration compactness framework, as well as establishing some strict comparisons for the sharp constants of sphere and paraboloid; then, for this $\bS^2$ case, Foschi \cite{Foschi2015} proved that the only extremals are constants by investigating the Cauchy-Schwarz type estimates for some quadratic forms based on the geometric feature of $\bS^2$; later, Shao \cite{Shao2016} obtained the existence of extremals for the one-dimension sphere $\bS^1$ by combining the outlines in \cite{CS2012A&P} and the profile decomposition ideas in \cite{BG1999,CK2007}; then for arbitrary dimensions, Frank, Lieb and Sabin \cite{FLS2016} established a characterization for the precompactness of extremal sequences by applying the \textit{missing mass method} from Lieb \cite{Lieb1983}.
	\item \textbf{Other situations:} There are many related works such as the odd curves \cite{BOQ2020,FS2018,Shao2009}, hyperboloids \cite{COS2019ANIHPC,COSS2021}, perturbations \cite{OQ2018}, and non-endpoint type estimates \cite{FVV2011,FVV2012,HS2012}, as well as $L^p$ extremals \cite{BS2023,CQ2014,FS2022,Stovall2020}.
\end{itemize}

The natural generalization of the paraboloid case is to investigate the sharp Fourier extension on fractional surfaces $(\xi,|\xi|^{\alpha}) \subset \bR^{d+1}$, which is corresponding to the fractional Schr\"odinger equations. For the case $(d,\alpha)=(1,4)$, Jiang, Pausader and Shao \cite{JPS2010} established a dichotomy result on the existence of extremals by establishing the corresponding linear profile decomposition for one-dimension forth order Schr\"odinger equations; for the case $(d,\alpha)=(d,4)$, Jiang, Shao and Stovall \cite{JSS2017} studied the high-dimension forth order Schr\"odinger equations and established a dichotomy result on the existence of extremals; then for the case $(d,\alpha)=(2,4)$, Oliveira e Silva and Quilodr\'an \cite{OQ2018} resolved this dichotomy and obtained the existence of extremals by applying some comparison principle for convolutions of certain singular measures; later, Brocchi, Oliveira e Silva and Quilodr\'an \cite{BOQ2020} established a dichotomy result for the case of $(d,\alpha)=(1,\alpha)$ by following some concentration compactness arguments and further obtained the existence of extremals for all $\alpha\in (1,\alpha_0)$ with some $\alpha_0>5$ by applying the aforementioned comparison principle; recently, the authors \cite{DY2023} established same dichotomy result for the case of $(d,\alpha)=(1,\alpha)$ by establishing the corresponding linear profile decomposition for one-dimension fractional Schr\"odinger equations, and then further studied the asymmetric as well as non-endpoint Strichartz inequalities. For the case of one-dimension fractional curves, both of the proofs in \cite{BOQ2020} and \cite{DY2023} are based on some refined Strichartz estimates, which follows from the Hausdorff-Young inequality and Whitney decomposition. However, these techniques cannot deal with the higher-dimension fractional surfaces case.

In this article, we investigate the general $(d,\alpha)$ case. One of our main results is the following existence of extremals consequence Theorem \ref{T:Existence}. This result generalizes the aforementioned result of \cite[Theorem 1.6]{OQ2018} which claims the existence of extremals for the case of $(d,\alpha)=(2,4)$.
\begin{thm} \label{T:Existence}
	For dimension $d=2$, there exists one constant $\alpha_0>5$ such that for arbitrary $\alpha\in[2,\alpha_0)$ the extremal for $\mathbf{M}_{2,\alpha}$ exists.
\end{thm}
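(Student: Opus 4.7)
The plan is to deduce Theorem~\ref{T:Existence} from the precompactness characterization of extremal sequences announced in the abstract (missing mass method combined with bilinear restriction theory), by verifying a strict comparison for the sharp constant. I would start with an arbitrary extremal sequence $(f_n)$ for $\mathbf{M}_{2,\alpha}$ normalized by $\|f_n\|_{L^2(\bR^2)}=1$. The characterization should reduce precompactness modulo the symmetry group $G$ to a strict inequality of the form
\[
	\mathbf{M}_{2,\alpha} > \mathbf{S}_2^{\ast}.
\]
The Stein--Tomas paraboloid constant $\mathbf{S}_2^{\ast}$ appears on the right because the only loss of compactness comes from concentration at a nonzero frequency $\xi_0$ on the fractional surface $(\xi,|\xi|^\alpha)$; combined with the scaling $f\mapsto h^{d/2}f(h\,\cdot)$ in $G$, such a concentration flattens the surface to its tangent paraboloid, whose endpoint sharp constant is $\mathbf{S}_2^{\ast}$. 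Once the strict inequality is available, extracting a convergent subsequence of $([g_n]f_n)$ yields a unit-norm limit $f_\ast \in L^2(\bR^2)$ that saturates \eqref{E:alpha-Strichartz}.

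To verify $\mathbf{M}_{2,\alpha} > \mathbf{S}_2^{\ast}$ I would invoke a convolution comparison principle in the spirit of Oliveira e Silva and Quilodr\'an \cite{OQ2018}. For $d=2$ the endpoint exponent is $q_0 = 4$, and dualization expresses $\|[E_\alpha]f\|_{L^4_{t,x}(\bR^3)}^{\,4}$ as a quadratic form in $|f|^2$ paired against the double convolution $\sigma_\alpha * \sigma_\alpha$ of a suitably weighted surface measure on $(\xi,|\xi|^\alpha)$. A pointwise lower bound for this double convolution relative to its paraboloid analogue, exploiting the rotational symmetry of $|\xi|^\alpha$ and an explicit analysis of the associated folding map, together with testing against a Gaussian-type trial function, should deliver the strict comparison on the claimed range of $\alpha$.

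The main obstacle is precisely this convolution comparison. The endpoints $\alpha=2$ (Shao \cite{Shao2009EJDE}) and $\alpha=4$ \cite[Theorem 1.6]{OQ2018} are already in hand, but extending the pointwise bound \emph{uniformly} over the interval $[2,\alpha_0)$ requires tight control of the Jacobian of the folding map as $\alpha$ varies, since the curvature of $(\xi,|\xi|^\alpha)$ degenerates at the origin and grows at infinity when $\alpha$ is large. The threshold $\alpha_0 > 5$ should emerge as the first value at which the pointwise lower bound first fails, mirroring the 1D threshold obtained in \cite{BOQ2020,DY2023}; carrying out the analysis in the 2D radial setting, rather than the 1D Whitney decomposition framework used there, is the delicate technical step.
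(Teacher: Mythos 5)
There is a genuine gap: the strict inequality you propose to verify is the wrong one, and in fact it is false on the relevant range of $\alpha$. The precompactness characterization does not reduce matters to $\mathbf{M}_{2,\alpha}>\mathbf{S}_2^{*}$; the correct threshold (Theorem \ref{T:Precompactness}) is
\[
\mathbf{M}_{2,\alpha}>(\alpha-1)^{-\frac{1}{8}}\l(\alpha/2\r)^{-\frac{1}{4}}\,\mathbf{S}_2^{*},
\qquad\text{equivalently}\qquad
\mathbf{M}_{2,\alpha}^4>\l(2\alpha\sqrt{\alpha-1}\r)^{-1},
\]
and the factor $\mathbf{a}_{2,\alpha}^{*}=(\alpha-1)^{-1/8}(\alpha/2)^{-1/4}$ cannot be dropped. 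When mass escapes to frequencies $|\xi_n|\to\infty$, the surface $(\xi,|\xi|^{\alpha})$ does flatten to a quadratic, but to the \emph{anisotropic} paraboloid $\alpha|\xi|^2/2+\alpha(\alpha-2)|\xi\xi_0|^2/2$; renormalizing it to the standard paraboloid by the linear map $A_0$ produces the Jacobian factor $|\det A_0|^{1/q_0-1/2}=\mathbf{a}_{2,\alpha}^{*}$ (Lemma \ref{L:Asymptotic schrodinger}), so the loss-of-compactness level is $\mathbf{a}_{2,\alpha}^{*}\mathbf{S}_2^{*}$, not $\mathbf{S}_2^{*}$. Your target inequality $\mathbf{M}_{2,\alpha}>\mathbf{S}_2^{*}$ actually fails for every $\alpha\in(2,\alpha_0)$: by the Plancherel identity $(2\pi)\mathbf{M}_{2,\alpha}^4=\mathcal{Q}_{\alpha}^4$ and the upper bound $\mathcal{Q}_{\alpha}^4\leq\pi/\alpha$ of \cite[Proposition 6.9]{OQ2018}, one has $\mathbf{M}_{2,\alpha}^4\leq(2\alpha)^{-1}<1/4=(\mathbf{S}_2^{*})^4$. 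So the comparison step, as you set it up, targets an inequality that cannot hold, and the argument collapses at exactly the point where the quantitative input is needed.

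Once the threshold is corrected, the remaining work is far lighter than the program you sketch: no new pointwise convolution comparison, uniform in $\alpha$, has to be carried out. The paper observes that $[D^{\frac{\alpha-2}{4}}][e^{it|\nabla|^{\alpha}}]\mathscr{F}^{-1}f$ is, up to constants, the space-time Fourier transform of $f\sigma_{\alpha}$ with the weighted measure $\ddd\sigma_{\alpha}(y,s)=\delta(s-|y|^{\alpha})|y|^{\frac{\alpha-2}{4}}\ddd y\,\ddd s$, whence $(2\pi)\mathbf{M}_{2,\alpha}^4=\mathcal{Q}_{\alpha}^4$, and then quotes the lower bound $\mathcal{Q}_{\alpha}^4>\pi/(\alpha\sqrt{\alpha-1})$ of \cite[Proposition 6.9]{OQ2018}, valid for all $\alpha\in(2,\alpha_0)$ with some $\alpha_0>5$; combined with $\mathbf{S}_2^{*}=2^{-1/2}$ (Gaussians extremize the $d=2$ Schr\"odinger estimate) this is precisely $\mathbf{M}_{2,\alpha}^4>(2\alpha\sqrt{\alpha-1})^{-1}$, and the case $\alpha=2$ is classical. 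In your proposal this quantitative input is left as an unproved ``pointwise lower bound for the double convolution'' whose uniformity in $\alpha$ you flag as the delicate step, but that analysis is already available in the cited proposition in exactly the weighted form required; what is missing in your write-up is not that computation but the correct identification of the threshold it must beat.
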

To prove this result, we need the following precompactness Theorem \ref{T:Precompactness}, which gives one characterization for the precompactness of extremal sequences. With this precompactness theorem in place, as we will show later in Section \ref{S:Existence of extremals}, our Theorem \ref{T:Existence} follows directly from the previous results in \cite[Proposition 6.9]{OQ2018} and the classical fact that Gaussians are extremals for $\mathbf{S}_2^{*}$.
\begin{thm}\label{T:Precompactness}
	All extremal sequences for $\mathbf{M}_{d,\alpha}$ are precompact up to symmetries if and only if
	\begin{equation}\label{T:Precompactness-1}
		\mathbf{M}_{d,\alpha}>(\alpha-1)^{\frac{-1}{2d+4}} (\alpha/2)^{\frac{-d}{2d+4}} \mathbf{S}_d^{*}.
	\end{equation}
	In particular, if the strict inequality \eqref{T:Precompactness-1} holds, then there exists an extremal for $\mathbf{M}_{d,\alpha}$.
\end{thm}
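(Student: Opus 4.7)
Following the missing mass framework of Lieb \cite{Lieb1983} and Frank-Lieb-Sabin \cite{FLS2016}, I would first construct, for the necessity direction, an explicit extremal sequence that $G$ cannot compactify whenever equality holds in \eqref{T:Precompactness-1}. Fix $\xi_0\in\bR^d$ with $|\xi_0|=1$; the Hessian $H_{\xi_0}$ of $|\xi|^\alpha$ at $\xi_0$ has radial eigenvalue $\alpha(\alpha-1)$ and tangential eigenvalues $\alpha$ (multiplicity $d-1$), so $\det(H_{\xi_0}/2)=(\alpha/2)^d(\alpha-1)$. Pulling back a Gaussian extremal for the paraboloid through the linear map normalizing $\tfrac{1}{2}\eta^T H_{\xi_0}\eta$ to $|\mu|^2$ and frequency-localizing at $\xi_0$ on shrinking scales $\epsilon_n\downarrow 0$, a change-of-variables computation gives
\[
\lim_{n\to\infty}\frac{\|[E_\alpha]f_n\|_{L^{q_0}_{t,x}}}{\|f_n\|_{L^2}}=\det(H_{\xi_0}/2)^{-\frac{1}{2d+4}}\,\mathbf{S}_d^{*}=(\alpha-1)^{-\frac{1}{2d+4}}(\alpha/2)^{-\frac{d}{2d+4}}\mathbf{S}_d^{*}.
\]
Since $\epsilon_n\to 0$ and $G$ contains no frequency translations, $(f_n)$ cannot be compactified, forcing \eqref{T:Precompactness-1} under the precompactness hypothesis.

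\textbf{Sufficiency via bilinear restriction and missing mass.} For the converse, let $(f_n)$ be an extremal sequence with $\|f_n\|_2=1$. I would first establish a refined Strichartz estimate of the form
\[
\|[E_\alpha]f\|_{L^{q_0}_{t,x}}\le C\,\|f\|_{L^2}^{1-\theta}\Big(\sup_{\tau}|\tau|^{-1/2}\|\widehat f\,\mathbf{1}_\tau\|_{L^2}\Big)^{\theta},\qquad \theta\in(0,1),
\]
with $\tau=B(\xi,r)\subset\bR^d$ ranging over dyadic caps away from the origin. This follows from an $\alpha$-adapted bilinear restriction bound combined with Whitney decomposition and Hausdorff-Young interpolation; the bilinear piece reduces, after rescaling each dyadic annulus to unit scale, to Tao-style paraboloid bilinear restriction, and the transversality and curvature hypotheses hold uniformly for $\alpha\ge 2$ on $\bR^d\setminus\{0\}$. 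Applied to $f_n$, the estimate forces caps $\tau_n=B(\xi_n,r_n)$ with $|\tau_n|^{-1/2}\|\widehat{f_n}\mathbf{1}_{\tau_n}\|_2\ge c>0$; using dilations in $G$ I normalize $r_n=1$. If $(|\xi_n|)$ is bounded, translations in $G$ produce a nonzero weak limit $f_n\rightharpoonup f_*$ with $\|f_*\|_2\ge c$; a Brezis-Lieb decomposition for the space-time $L^{q_0}$ norm, together with $1=\|f_*\|_2^2+\lim_{n\to\infty}\|f_n-f_*\|_2^2$ and the strict concavity of $A\mapsto A^{q_0/2}$ (valid since $q_0/2>1$), then forces $\|f_n-f_*\|_2\to 0$, giving precompactness and realizing $f_*$ as an extremal. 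If $|\xi_n|\to\infty$, applying the anisotropic parabolic zoom from the necessity argument at the moving centers $\xi_n$ produces a weak $L^2$ limit on the rescaled paraboloid model and yields $\mathbf{M}_{d,\alpha}\le(\alpha-1)^{-1/(2d+4)}(\alpha/2)^{-d/(2d+4)}\mathbf{S}_d^{*}$, contradicting \eqref{T:Precompactness-1}.

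\textbf{Main obstacle.} The heart of the argument is the refined Strichartz/bilinear restriction estimate on $(\xi,|\xi|^\alpha)$: I must transport Tao-type paraboloid bilinear $L^2$ theory uniformly across dyadic frequency annuli while handling the multiplier $|\xi|^{(\alpha-2)/q_0}$ introduced by $[D^{(\alpha-2)/q_0}]$. A secondary delicate point is that the paraboloid-escape alternative $|\xi_n|\to\infty$ is ruled out by a limiting computation matching the threshold in \eqref{T:Precompactness-1} exactly, so the Jacobian change-of-variables computation cannot afford any constant loss; in particular, the numerology $\det(H_{\xi_0}/2)^{-1/(2d+4)}=(\alpha-1)^{-1/(2d+4)}(\alpha/2)^{-d/(2d+4)}$ must be verified with care.
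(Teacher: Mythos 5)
Your overall architecture (a bilinear-restriction-based refined Strichartz estimate to extract a nonzero profile, a Br\'ezis--Lieb/missing-mass step, and the anisotropic Schr\"odinger limit producing the threshold constant $\det(\tfrac12\mathrm{Hess}\,|\xi|^{\alpha}(\xi_0))^{-1/(2d+4)}\mathbf{S}_d^{*}$) is the same as the paper's, but two steps have genuine gaps. First, your refined Strichartz inequality is false as stated: the functional $\sup_{\tau}|\tau|^{-1/2}\|\widehat f\,\mathds{1}_{\tau}\|_{L^2}$ is not invariant under the $L^2$-scaling $\widehat f(\xi)\mapsto\lambda^{d/2}\widehat f(\lambda\xi)$, which leaves $\|[E_{\alpha}]f\|_{L^{q_0}_{t,x}}/\|f\|_{L^2}$ unchanged. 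Taking $\widehat{f_R}=cR^{-d/2}\mathds{1}_{B(Re_1,R)}$ (caps well away from the origin), the left-hand side is a fixed positive constant while your right-hand side is $O(R^{-d\theta/2})\to0$. The correct cap functionals are sub-$L^2$ Lebesgue norms on caps (B\'egout--Vargas) or, as in Proposition \ref{P:Refined alpha-Strichartz}, $\sup_Q|Q|^{-1/2}\|[e^{it|\nabla|^{\alpha}}]f_Q\|_{L^\infty_{t,x}}$; the $L^\infty_{t,x}$ form matters beyond correctness because it is what produces the space-time translation parameters, so that a lower bound yields a nonzero weak limit after applying $G$ (Corollary \ref{C:Non-zero weak limit}). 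With an $L^2$-on-caps quantity, a lower bound on a unit cap with bounded center does not preclude $[g_n]f_n\rightharpoonup0$ for every choice of symmetries, so your step ``translations in $G$ produce a nonzero weak limit with $\|f_*\|_{L^2}\ge c$'' does not follow. Relatedly, in $d\ge2$ the passage from Tao's bilinear estimate back to the linear one is not ``Whitney plus Hausdorff--Young'': since $q_0/2<2$, one needs a quasi-orthogonality estimate for the Whitney pieces in $L^{q_0/2}$, and on $(\xi,|\xi|^{\alpha})$ this forces the angular sectorization and the parallelepiped decomposition of the lifted sum-sets (Lemma \ref{L:Quasi-orthogonality} together with Proposition \ref{P:Angle decomposition}), plus the annular Littlewood--Paley step handling the degenerate curvature at the origin (Lemma \ref{L:Annular orthogonality}). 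This is the technical core of the argument and is absent from your outline.

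Second, in the regime $|\xi_n|\to\infty$ you assert that one weak-limit extraction in the moving frame ``yields $\mathbf{M}_{d,\alpha}\le\mathbf{a}_{d,\alpha}^{*}\mathbf{S}_d^{*}$.'' That conclusion only comes out of a Br\'ezis--Lieb splitting in the moving frame: you must show $[\bar T_{\alpha}^n]r_n\to0$ almost everywhere (which requires a local smoothing estimate uniform in the frequency parameter, as in Lemmas \ref{L:Approximate local smooth}--\ref{L:Approximate local convergence}) and that $[\bar T_{\alpha}^n]V$ converges strongly in $L^{q_0}_{t,x}$ to the limiting anisotropic Schr\"odinger evolution (uniform stationary phase, as in the proof of Lemma \ref{L:Asymptotic schrodinger}); only then does the elementary inequality $a^{q_0/2}+(1-a)^{q_0/2}\le1$, combined with the crude bound $\mathbf{M}_{d,\alpha}\|r_n\|_{L^2}$ on the remainder, give the contradiction with \eqref{T:Precompactness-1}. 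A single profile at escaping frequencies controls only part of the mass; without the splitting, the remainder could a priori carry most of the $L^{q_0}$ norm. Finally, in your necessity construction use an extremal for $\mathbf{S}_d^{*}$ furnished by Shao's existence theorem rather than a Gaussian: for $d\ge3$ it is not known that Gaussians extremize the Schr\"odinger Strichartz inequality, and with a non-extremal profile your concentrating sequence need not be an extremal sequence for $\mathbf{M}_{d,\alpha}$, so no contradiction with precompactness results; with that fix, your fixed-point frequency concentration is equivalent, via the scaling in $G$, to the paper's escaping-frequency sequence in Theorem \ref{T:Mass}, and the numerology you record is correct.
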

It is obvious that the Strichartz norm is invariant under the actions of aforementioned symmetries\footnote{Indeed, the Strichartz estimates are also invariant under some other transformations such as the rotation symmetries $f(x)\mapsto e^{ix_0}f(x)$. However, we will not use these symmetries here since they do not lead to loss of compactness in $L^2(\bR^d)$ and are inessential in our situation. Furthermore, for the special case $\alpha=2$, there are also frequency-translation symmetries $f(x)\mapsto e^{ix\xi_0}f(x)$ which does not maintain the Strichartz norm for general $\alpha$.}, hence precompact up to symmetries is the best one can expect. Note that Theorem \ref{T:Precompactness} states some universal property for all extremal sequences instead of identifying the extremals. Similar consequences are also established in previous literature, such as \cite[Theorem 1.1]{FLS2016} for the sphere and \cite[Theorem 1]{FS2018} for cubic curve. As mentioned above, the one-dimension case of Theorem \ref{T:Precompactness} has been proved in the recent works \cite[Theorem 1.3]{BOQ2020} and \cite[Theorem 1.1]{DY2023} by different methods. For the sharp constant $\mathbf{S}_d^{*}$, it is conjectured in \cite{Foschi2007,HZ2006} that the only extremals are Gaussians and then the corresponding constant can be obtained by the residue theorem. It can be seen from the asymptotic Schr\"{o}dinger Lemma \ref{L:Asymptotic schrodinger} that $(\alpha-1)^{\frac{-1}{2d+4}} (\alpha/2)^{\frac{-d}{2d+4}} \mathbf{S}_d^{*}$ is a lower bound for $\mathbf{M}_{d,\alpha}$. In addition, the strict inequality \eqref{T:Precompactness-1} has been proved for some special cases and further details are discussed in Remark \ref{R:Asymptotic schrodinger}.

Now we give some remarks on the proof of Theorem \ref{T:Precompactness}. Recall that the Banach-Alaoglu theorem implies that all extremal sequences must have weak limits up to subsequences. Hence the precompactness of extremal sequences (as well as the existence of extremals) usually comes from two steps: finding a nonzero weak limit and then upgrading this weak convergence to strong convergence. The first step often relies on some refinement of the original $\alpha$-Strichartz estimates \eqref{E:alpha-Strichartz}, and the second step often follows from some compactness arguments. In this paper, we achieve these two steps by using the bilinear restriction estimates and the missing mass method separately.

The fact that Tao's bilinear restriction estimates \cite{Tao2003} could deduce some refined Strichartz estimates is first shown by B\'egout and Vargas for the classical Schr\"odinger equations in \cite{BV2007}, which generalizes the previous low-dimensional results in \cite{KPV1991,Keraani2001,MV1998} to higher dimensions. And then numerous consequences are established by following this idea, see, for instance, \cite{COSS2021,FLS2016,KV2013}. In fact, it is also mentioned in \cite[Section 2]{BOQ2020} and \cite[Section 3.3]{FS2018} that the refined Strichartz-type estimates usually come from the bilinear restriction estimates. However, both of them can simply use the Hausdorff-Young inequality instead, since they both only study the one dimensional fractional curves. This Hausdorff-Young inequality is also used by the authors in the recent work \cite{DY2023}. Inspired by these previous results, in this paper we will use Tao's bilinear restriction estimates to establish the desired high dimensional refined $\alpha$-Strichartz estimates Proposition \ref{P:Refined alpha-Strichartz} for fractional surfaces and then use this result to establish the non-zero weak limit.

However, there are two potential difficulties we should resolve: one is that the fractional surface $(\xi, |\xi|^{\alpha})$ has zero Gaussian curvature at the origin point, which means that we cannot use Tao's bilinear restriction estimate \cite[Section 9]{Tao2003} directly; another one is that (except for the origin point) the geometric structure of fractional surface is different from that of paraboloid, which implies that we need to investigate the corresponding quasi-orthogonality of these fractional surfaces in order to apply the bilinear-to-linear arguments in \cite{TVV1998}. As shown later in Section \ref{S:Bilinear restriction and refined Strichartz}, we settle the first difficulty by applying some annular orthogonality consequence Lemma \ref{L:Annular orthogonality} and then restrict our attention to the annular case, which are inspired by \cite{COSS2021,JSS2017,KSV2012}; furthermore, based on one geometric result Proposition \ref{P:Angle decomposition}, the second difficulty can be solved by dividing the angle into a large number of regions (see the number $K_{d,\alpha}$ in Section \ref{S:Bilinear restriction and refined Strichartz}) and then investigating the quasi-orthogonality of each regions.

To upgrade the weak convergence to strong convergence, we use the missing mass method which is invented by Lieb \cite{Lieb1983} in the content of Hardy-Littlewood-Sobolev inequality; see, for instance, \cite{FLS2016,FS2018} for the applications of this method in sharp Fourier restriction theory lately. One crucial tool to apply this method is the Br\'ezis-Lieb type lemma due to \cite{BL1983,Lieb1983}, and here we use a more general version \cite[Lemma 3.1]{FLS2016}. There are also various other kinds of generalizations appeared in the literature such as \cite{BOQ2020,COS2019ANIHPC,FVV2011}. For a sequence of functions, when we decompose each function into different parts, these Br\'ezis-Lieb type lemmas can give some limit-orthogonality properties under some suitable conditions. More specifically, the main required condition is a pointwise convergence assumption which in turn relies on the corresponding local smoothing estimates. Furthermore, in high dimensions, there need some multi-variable analysis such as the multi-variable Taylor's theorem and decay estimates for multi-variable oscillatory integrals.

Let us roughly explain how our strategy works. It seems that this also obeys Lions' concentration-compactness principle \cite{Lions1984a,Lions1984b}. Note that the loss of compactness in $L^2(\bR^d)$ is the main enemy in our situation. Also recall that there are some symmetries when we investigate the precompactness of extremal sequences for $\mathbf{M}_{d,\alpha}$. These three parameters $(h_n,t_n,x_n)$ in the symmetries represent three possible ways to lose compactness in $L^2(\bR^d)$: scaling and space-time translations. Fortunately, by using the terminology ``up to symmetries" to eliminate the effect of these three parameters, we do not need to worry about the aforementioned three ways of losing compactness. However, there is obvious another way to lose compactness in $L^2(\bR^d)$: the frequency translations $f(x)\mapsto e^{ix\xi_n}f(x)$ with parameters $|\xi_n|\to\infty$. Notice that the Strichartz norm changes when frequency translation occurs. Therefore, to establish the precompactness of extremal sequences (up to symmetries), we should at least understand the effect of frequency translations and exclude this type of possibility of losing compactness. Then we are led to the asymptotic Schr\"odinger behavior Lemma \ref{L:Asymptotic schrodinger} and this is why the strict inequality \eqref{T:Precompactness-1} appears in our Theorem \ref{T:Precompactness}: to exclude the loss of compactness deduced by frequency translations. In this sense, our Theorem \ref{T:Precompactness} states one fact that essentially the aforementioned four ways are the only ways of losing compactness when investigating the extremal sequences of $\mathbf{M}_{d,\alpha}$.

The outline of this paper is as follows. In Section \ref{S:Bilinear restriction and refined Strichartz}, we use the bilinear restriction theory to deduce the refined $\alpha$-Strichartz estimates as well as the non-zero weak limit consequence; meanwhile, our arguments in this section rely on an auxiliary geometric result whose proof is postponed to the Appendix \ref{S:A geometric result}. Then in Section \ref{S:Local smoothing and local convergence}, we present some pointwise convergence results so that we are able to apply the Br\'ezis-Lieb type lemma. Next in Section \ref{S:Approximate symmetry}, we study the effect of frequency parameters including the asymptotic Schr\"odinger behavior and the corresponding pointwise convergence property. In Section \ref{S:Method of Missing Mass}, we apply the missing mass method to establish our precompactness Theorem \ref{T:Precompactness}. Finally in Section \ref{S:Existence of extremals}, we show our existence of extremals result Theorem \ref{T:Existence}.

We end this section with some notations. The familiar notation $x\lesssim y$ denotes that there exists a finite constant $C$ such that $|x|\leq C|y|$, similarly for $x\gtrsim y$ and $x\sim y$. If necessary, we may use the notation $x\lesssim_{\alpha} y$ to show the dependence of this aforementioned constant $C=C_{\alpha}=C(\alpha)$. Finally the indicator function of a set $E$ will be denoted by $\mathds{1}_E$ and we further define $\widehat{f}_E:=\mathds{1}_E\widehat{f}$.

\section{Bilinear restriction and refined Strichartz}\label{S:Bilinear restriction and refined Strichartz}
In this section, we apply the bilinear restriction theory to establish our desired high dimensional refined $\alpha$-Strichartz estimates, see Proposition \ref{P:Refined alpha-Strichartz} below. To achieve this, we need an auxiliary function and a relevant geometric result which is provided in Appendix \ref{S:A geometric result}. For a vector $\xi=(\xi_1,\xi_2,\cdots,\xi_d) \in\bR^d$, we introduce the notations
\[\|\xi\|_{\max}:=\max\{|\xi_1|, |\xi_2|, \ldots, |\xi_d|\}, \quad \|\xi\|_{\min}:=\min\{|\xi_1|, |\xi_2|, \ldots, |\xi_d|\}.\]
Let $\mathcal{D}$ denote the family of all the classical dyadic cubes in $\bR^d$. We further define the \textit{annular dyadic cubes} as follows.
\begin{definition}
	For $N\in 2^{\bZ}$, we define the \textit{dyadic cube-annular $\mathcal{A}_N$ on Fourier space} as
	\[\mathcal{A}_N:=\l\{\xi \in\bR^d: N\leq \|\xi\|_{\max}<2N\r\}.\]
	Moreover, for $r\in 2^{\bZ_{+}}$, we decompose $\mathcal{A}_N$ into \textit{annular dyadic cubes} which are
	\[\mathcal{D}_{N,r}:=\l\{\tau\in \mathcal{D}: \tau \subset \mathcal{A}_N, \ell(\tau)=\frac{N}{r}\r\}.\]
\end{definition}
To apply some Whitney-type decomposition and achieve some quasi-orthogonal properties, we need the following preliminary definitions and notations.
\begin{definition}
	For dyadic cubes $\tau\subset \mathcal{A}_N$ and $\tau'\subset \mathcal{A}_N$, we write $\tau\sim\tau'\subset \mathcal{A}_N$ if $\ell(\tau)=\ell(\tau')$ and they are not adjacent, their parents are not adjacent, their $2$-parents are not adjacent, ..., their $(N_{d,\alpha}-1)$-parents are not adjacent while their $N_{d,\alpha}$-parents are adjacent. Here we say two regions are \textit{adjacent} if their closures intersect, and the number $N_{d,\alpha}\in\bZ_{+}$ will be determined later in \eqref{E:Angle condition}.
\end{definition}
Notice that $\tau\sim\tau'$ will imply $\ell(\tau)=N/r$ with some $r\geq 2^{N_{d,\alpha}}$. To use the bilinear form and the Whitney-type decomposition, we divide the unit sphere $\bS^{d-1}$ into $K_{d,\alpha}$ parts which will lead to the following angle decomposition
\[\mathcal{A}_N=\bigcup_{j=1}^{K_{d,\alpha}}\mathcal{A}_N^{j},\quad \theta_N^j:=\sup\l\{\theta(\xi,\eta): (\xi,\eta)\in \mathcal{A}_N^j\times \mathcal{A}_N^j\r\},\]
such that $\theta_N^j <\theta_{d,\alpha}$ with the angle range $\theta_{d,\alpha}$ depending on $K_{d,\alpha}$. Here $\theta(\xi,\eta)$ denotes the angle between the vectors $\xi$ and $\eta$. By symmetry we may assume $|\xi|\geq |\eta|$, meanwhile we can set $K_{d,\alpha}$ large enough to let the angle region $\theta_{d,\alpha}$ as small as we want. Here the numbers $N_{d,\alpha}$ and $K_{d,\alpha}$ are chosen based on the angle decomposition result Proposition \ref{P:Angle decomposition}. Indeed, using the notations in Remark \ref{R:Angle decomposition}, our numbers $N_{d,\alpha}$ and $K_{d,\alpha}$ are chosen such that
\begin{equation}\label{E:Angle condition}
	\arctan (d^{-1/2}2^{-N_{d,\alpha}}) + \theta_{d,\alpha} \leq \bar{\theta}_0.
\end{equation}
Since the angle $\bar{\theta}_0$ is fixed and depends only on $(d,\alpha)$, we know that this condition \eqref{E:Angle condition} must can be achieved as long as $N_{d,\alpha}$ large enough and $\theta_{d,\alpha}$ small enough which means $K_{d,\alpha}$ large enough.

The reason we construct this condition \eqref{E:Angle condition} is that it will imply the result \eqref{E:Quasi-orthogonality-7} which is a critical estimate in the proof of Quasi-orthogonality Lemma \ref{L:Quasi-orthogonality}. Meanwhile it should be pointed out that
\begin{equation}\label{E:Angular divided}
	\max_{1\leq j\leq K_{d,\alpha}} \l\|[e^{it|\nabla|^{\alpha}}] f_{\mathcal{A}_{N}^j} \r\|_{L_{t,x}^{q_0}} \leq \l\|[e^{it|\nabla|^{\alpha}}] f_{\mathcal{A}_N}\r\|_{L_{t,x}^{q_0}} \leq \sum_{j=1}^{K_{d,\alpha}} \l\|[e^{it|\nabla|^{\alpha}}] f_{\mathcal{A}_{N}^j} \r\|_{L_{t,x}^{q_0}}.
\end{equation}
We further introduce the \textit{annular-restricted notation} $\tau\sim\tau'\subset \mathcal{A}_N^j$ which means
\[\tau\sim\tau'\subset \mathcal{A}_N, \quad \tau\cap\mathcal{A}_N^j\neq \emptyset, \quad \tau'\cap\mathcal{A}_N^j \neq \emptyset.\]
Recall that $\bar{\theta}_0$ is very small and much smaller than $\pi/8$. Hence by dividing $\mathcal{A}_N$ into $K_{d,\alpha}$ parts as above, except for a null set, it can be achieved that for arbitrary $(\xi,\xi') \in \mathcal{A}_{N}^{j}\times \mathcal{A}_N^j$ there exists unique pair $\tau\sim \tau'\subset \mathcal{A}_N^j$ satisfying $\xi\in \tau$ and $\xi'\in \tau'$. This fact deduces the Whitney-type decomposition which will be used later in the proof of annular refined estimates Lemma \ref{L:Annular refined alpha-Strichartz}.

In our bilinear setting, the first crucial fact is the following quasi-orthogonality lemma.

\begin{lemma}\label{L:Quasi-orthogonality}
	Suppose that $d\geq 2$ and $j_0\in \{1,2\ldots, K_{d,\alpha}\}$. Then the following inequality holds
	\[\l\|\sum_{\tau\sim\tau' \subset \mathcal{A}_1^{j_0}} [e^{it|\nabla|^{\alpha}}] f_{\tau} \cdot [e^{it|\nabla|^{\alpha}}] f_{\tau'}\r\|_{L_{t,x}^{\frac{q_0}{2}}}^{\frac{q_0}{2}} \lesssim \sum_{\tau\sim\tau' \subset \mathcal{A}_1^{j_0}} \l\|[e^{it|\nabla|^{\alpha}}] f_{\tau} \cdot [e^{it|\nabla|^{\alpha}}] f_{\tau'}\r\|_{L_{t,x}^{\frac{q_0}{2}}}^{\frac{q_0}{2}},\]
	for all $f\in L^2(\bR^d)$ satisfying $\supp{\widehat{f}}\subset \mathcal{A}_1^{j_0}$.
\end{lemma}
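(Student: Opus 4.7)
The plan is to reduce the claim to a bounded-overlap property in the space-time Fourier domain. Since each factor $[e^{it|\nabla|^{\alpha}}]f_{\tau}$ is a superposition of space-time Fourier modes supported on the graph $\Gamma_{\tau}:=\{(\xi,|\xi|^{\alpha}):\xi\in\tau\}$, the bilinear product
\[
F_{\tau,\tau'}:=[e^{it|\nabla|^{\alpha}}]f_{\tau}\cdot[e^{it|\nabla|^{\alpha}}]f_{\tau'}
\]
has its space-time Fourier support in the Minkowski sum
\[
\Sigma_{\tau,\tau'}:=\bigl\{(\xi+\xi',\ |\xi|^{\alpha}+|\xi'|^{\alpha}):\xi\in\tau,\ \xi'\in\tau'\bigr\}\subset\bR^{d+1}.
\]
I would then deduce the stated quasi-orthogonality from the bounded overlap of the family $\{\Sigma_{\tau,\tau'}\}_{\tau\sim\tau'\subset\mathcal{A}_1^{j_0}}$.

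The main obstacle is the geometric bounded-overlap claim: there exists $M=M(d,\alpha)$ such that each $(\zeta,\sigma)\in\bR^{d+1}$ belongs to at most $M$ of the sets $\Sigma_{\tau,\tau'}$ over Whitney-separated pairs in $\mathcal{A}_1^{j_0}$. To establish this, I would fix $(\zeta,\sigma)$ and analyse the preimage variety
\[
\bigl\{(\xi,\xi')\in\mathcal{A}_1^{j_0}\times\mathcal{A}_1^{j_0}:\ \xi+\xi'=\zeta,\ |\xi|^{\alpha}+|\xi'|^{\alpha}=\sigma\bigr\}.
\]
The Jacobian of the map $(\xi,\xi')\mapsto(\xi+\xi',|\xi|^{\alpha}+|\xi'|^{\alpha})$ degenerates only along the diagonal $\xi=\xi'$, which the Whitney separation $\tau\sim\tau'$ (forcing $|\xi-\xi'|\sim\ell(\tau)\cdot 2^{N_{d,\alpha}}$) stays away from. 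Moreover the angle condition \eqref{E:Angle condition}, tuned through the geometric Proposition \ref{P:Angle decomposition} in the appendix, is precisely what will supply the quantitative estimate \eqref{E:Quasi-orthogonality-7}, ensuring that the portion of the preimage variety lying in the small angular sector $\mathcal{A}_1^{j_0}$ has diameter $\lesssim\ell(\tau)$. Since $\tau$ and $\tau'$ both have sidelength $\ell(\tau)$, the pair of cubes containing $(\xi,\xi')$ is then determined up to $O(1)$ choices, yielding the bounded overlap.

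With bounded overlap in hand, the remaining step is routine. Set $p:=q_0/2=(d+2)/d\in(1,2]$ and $p'=(d+2)/2\geq 2$. By $L^{p}$–$L^{p'}$ duality,
\[
\Bigl\|\sum_{\tau\sim\tau'}F_{\tau,\tau'}\Bigr\|_{L_{t,x}^{p}}=\sup_{\|\phi\|_{L_{t,x}^{p'}}\leq 1}\Bigl|\int\sum_{\tau\sim\tau'}F_{\tau,\tau'}\,\bar{\phi}\,\ddd t\,\ddd x\Bigr|.
\]
For admissible $\phi$, let $\phi_{\tau,\tau'}$ denote a smooth Fourier truncation of $\phi$ to a slight thickening of $\Sigma_{\tau,\tau'}$, so that Plancherel gives $\int F_{\tau,\tau'}\bar{\phi}=\int F_{\tau,\tau'}\overline{\phi_{\tau,\tau'}}$. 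Applying Hölder's inequality in both the space-time integral and the sum over $(\tau,\tau')$, together with the embedding $\ell^{p'}\hookrightarrow\ell^{2}$ and the bounded-overlap square-function estimate $\|(\sum_{\tau\sim\tau'}|\phi_{\tau,\tau'}|^{2})^{1/2}\|_{L_{t,x}^{p'}}\lesssim\|\phi\|_{L_{t,x}^{p'}}$ (which follows from bounded Fourier overlap for $p'\geq 2$), I obtain $\|\sum_{\tau\sim\tau'}F_{\tau,\tau'}\|_{L_{t,x}^{p}}\lesssim(\sum_{\tau\sim\tau'}\|F_{\tau,\tau'}\|_{L_{t,x}^{p}}^{p})^{1/p}$. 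Raising to the $p$-th power yields the lemma.
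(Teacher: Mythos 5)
Your starting point coincides with the paper's: the space--time Fourier support of each product lies in the lifted sum-set, and the geometry (scale rigidity coming from $|\xi|^{\alpha}+|\xi'|^{\alpha}-|\xi+\xi'|^{\alpha}/2^{\alpha-1}\sim r^{-2}$, i.e.\ \eqref{E:Quasi-orthogonality-6}, which rests on the Hessian bounds of Proposition \ref{P:Angle decomposition} via \eqref{E:Angle condition}) yields a uniform bound on how many sum-sets can meet a given point. Your sketch of that overlap count is essentially the paper's first two steps, and in the special case $d=2$, where $q_0/2=2$, bounded overlap plus Plancherel does finish the proof. The problem is the last analytic step for $d\geq 3$, which is where the lemma's real content lies.

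For $d\geq 3$ one has $q_0/2=(d+2)/d<2$ and dual exponent $p'=(d+2)/2>2$, and your argument then hinges on the assertion that the square-function bound $\|(\sum_{\tau\sim\tau'}|\phi_{\tau,\tau'}|^{2})^{1/2}\|_{L^{p'}_{t,x}}\lesssim\|\phi\|_{L^{p'}_{t,x}}$ ``follows from bounded Fourier overlap for $p'\geq 2$.'' It does not: bounded overlap gives only the $p'=2$ case. For $p'>2$ such an estimate is a Rubio de Francia--type theorem, known only for special geometries (arbitrary intervals in one dimension, axis-parallel rectangles), and it is unavailable --- indeed obstructed by Besicovitch-type constructions --- for a family of curved slab-like sets $\Sigma_{\tau,\tau'}$ occurring at all dyadic scales $r^{-1}\times\cdots\times r^{-1}\times r^{-2}$ and with essentially arbitrary orientations, as here. (A further unaddressed point: a sharp or ``smooth'' truncation adapted to a thickened \emph{curved} slab is not a uniformly bounded multiplier on $L^{p'}$ for $p'\neq 2$.) This is exactly the difficulty the paper's proof is built to avoid: each sum-set is covered by $O(1)$ parallelepipeds $P_{\ell}$ whose $(1+\beta)$-dilates become pairwise disjoint after splitting the Whitney pairs into $O(1)$ subfamilies, and then the quasi-orthogonality lemma of \cite[Lemma 2.2]{Ramos2012} (in the spirit of \cite{TVV1998}) is applied; that lemma needs precisely the structure you skip --- bump functions equal to $1$ on the supports, supported in pairwise disjoint enlargements that are affine images of a fixed cube with uniformly bounded $\|\widehat{\varphi}\|_{L^{1}}$, together with the multiplier bound \eqref{E:Quasi-orthogonality-3} --- and it operates exactly in the range of exponents $q_0/2\leq 2$. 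So either you prove the missing square-function estimate (not a realistic route) or you upgrade ``bounded pointwise overlap'' to the disjoint-dilated-parallelepiped structure, at which point you have reconstructed the paper's argument; as written, the proposal has a genuine gap for all $d\geq 3$.
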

\begin{proof}[\textbf{Proof of Lemma \ref{L:Quasi-orthogonality}}]
	At the beginning, we mention that all the dyadic cubes in this proof have intersections with $\mathcal{A}_1^{j_0}$. Given $\tau\in \mathcal{D}_{1,r}$, denote $\xi_0:=c(\tau)$ the center of $\tau$. Then for every $\xi\in \tau$, considering the radial direction and angle differences, some geometry observations give that
	\begin{equation}\label{E:Quasi-orthogonality-1}
		\Big||\xi|-|\xi_0|\Big|\lesssim 1/r, \quad \l(|\xi||\xi_0|-\xi\xi_0\r)^{1/2}\lesssim 1/r.
	\end{equation}
	Furthermore for $\tau\sim\tau' \in \mathcal{D}_{1,r}$ and $\xi_0':=c(\tau')$, there holds
	\begin{equation}\label{E:Quasi-orthogonality-2}
		\Big||\xi_0|-|\xi'_0|\Big| +(|\xi_0||\xi'_0|-\xi_0\xi'_0)^{1/2} \sim 1/r.
	\end{equation}
	Denote $\tilde{\tau}, \tilde{\tau}'$ the lifts of $\tau, \tau'$ into the surface $(\xi, |\xi|^{\alpha})\subset \bR^{d+1}$. Based on the quasi-orthogonality result \cite[Lemma 2.2]{Ramos2012}, our main task is investigating the geometry of the sum-set
	\[\tilde{\tau}+\tilde{\tau}':=\l\{(\xi+\xi', |\xi|^{\alpha}+|\xi'|^{\alpha}): (\xi,\xi')\in \tau\times \tau'\r\} \subset \bR^{d+1}.\]
	We claim that the collection $\{(\tau,\tau'):\tau\sim \tau'\subset \mathcal{A}_1^{j_0}\}$ can be finitely decomposed into universal number of subsets $\l\{\mathscr{T}_m\r\}_{m=1}^{K_0}$, and then for each subset $\mathscr{T}_m$ there exist a universal number $K_1$ and a universal constant $\beta>0$ such that the following property holds: for every $(\tau,\tau') \in \mathscr{T}_m$,  we can find $K_1$ parallelepipeds $\{P_{\ell}=P_{\ell}(\tau,\tau')\}_{\ell=1}^{K_1}$ satisfying
	\[\l(\tilde{\tau}+\tilde{\tau}'\r) \subset \l(\bigcup_{\ell=1}^{K_1} P_{\ell}\r), \quad\quad \Big[(1+\beta)\cdot P_{\ell}\Big] \cap \Big[(1+\beta)\cdot P_{\ell'}\Big]=\emptyset ~ \text{for} ~\ell\neq \ell'.\]
	Here $(1+\beta)\cdot P_{\ell}$ denotes the centered dilation of $P_{\ell}$ which means
	\[(1+\beta)\cdot P_{\ell}:=(1+\beta)[P_{\ell}-c(P_{\ell})]+c(P_{\ell})\]
	with $c(P_{\ell})$ denoting the center of $P_{\ell}$. Let us postpone the detailed proof of this claim and use it to prove our final result now. Define $\Psi_{\ell}:=\mathds{1}_{P_{\ell}}$. Since $P_{\ell}$ is a parallelepiped, for every exponent $q>1$, the boundedness of Hilbert transform implies the following multiplier boundedness
	\begin{equation}\label{E:Quasi-orthogonality-3}
		\|F\ast \widehat{\Psi}_{\ell}\|_{L^q(\bR^{d+1})} \lesssim\|F\|_{L^q(\bR^{d+1})}
	\end{equation}
	for all $F\in L^q(\bR^{d+1})$. On the other hand, note that the space-time Fourier support satisfies
	\[\supp \Big([e^{it|\nabla|^{\alpha}}]f_{\tau}[e^{it|\nabla|^{\alpha}}]f_{\tau'}\Big)^{\bigwedge_{t,x}} \subset \l(\tilde{\tau}+\tilde{\tau}'\r).\]
	Hence by the claim and triangle inequality, we only need to show the following estimate holds
	\begin{equation}\label{E:Quasi-orthogonality-4}
		\l\|\sum_{(\tau,\tau')\in \mathscr{T}_m} \l([e^{it|\nabla|^{\alpha}}]f_{\tau}[e^{it|\nabla|^{\alpha}}]f_{\tau'}\r) \ast \widehat{\Psi}_{\ell}\r\|_{L_{t,x}^{\frac{q_0}{2}}}^{\frac{q_0}{2}} \lesssim \sum_{(\tau,\tau')\in \mathscr{T}_m} \l\| [e^{it|\nabla|^{\alpha}}]f_{\tau}[e^{it|\nabla|^{\alpha}}]f_{\tau'}\r\|_{L_{t,x}^{\frac{q_0}{2}}}^{\frac{q_0}{2}}
	\end{equation}
	for each $m\in \{1,2,\ldots, K_0\}$ and each $\ell\in \{1,2,\ldots, K_1\}$. This estimate \eqref{E:Quasi-orthogonality-4} comes from a direct application of \cite[Lemma 2.2]{Ramos2012} and the boundedness \eqref{E:Quasi-orthogonality-3}. Indeed, since $P_{\ell}$ is an affine image of the unit cube, for each $(\tau,\tau')\in\mathscr{T}_m$ we can construct a bump function $\varphi=\varphi(\tau,\tau')$ satisfying
	\[\supp \varphi \subset (1+\beta)\cdot P_{\ell}, \quad \varphi(x)\equiv 1 ~\text{for}~ x\in P_{\ell}, \quad \|\widehat{\varphi}\|_{L^1(\bR^{d+1})}\leq C,\]
	where the constant $C$ is independent of $(\tau,\tau')$. Thus, \cite[Lemma 2.2]{Ramos2012} deduces that
	\[\l\|\sum_{(\tau,\tau')\in \mathscr{T}_m} \l([e^{it|\nabla|^{\alpha}}]f_{\tau}[e^{it|\nabla|^{\alpha}}]f_{\tau'}\r) \ast \widehat{\Psi}_{\ell}\r\|_{L_{t,x}^{\frac{q_0}{2}}}^{\frac{q_0}{2}} \lesssim \sum_{(\tau,\tau')\in \mathscr{T}_m} \l\| \l([e^{it|\nabla|^{\alpha}}]f_{\tau}[e^{it|\nabla|^{\alpha}}]f_{\tau'}\r) \ast \widehat{\Psi}_{\ell}\r\|_{L_{t,x}^{\frac{q_0}{2}}}^{\frac{q_0}{2}}.\]
	This conclusion and the boundedness \eqref{E:Quasi-orthogonality-3} immediately give the desired estimate \eqref{E:Quasi-orthogonality-4}.
	
	It remains to prove the aforementioned claim. For the sum-set $\tau+\tau'$, considering the length along radial direction and the angle differences, it is not hard to see that
	\begin{equation}\label{E:Quasi-orthogonality-5}
		\Big||\xi+\xi'|-|\xi_0+\xi_0'|\Big|\lesssim 1/r, \quad \l[|\xi+\xi'||\xi_0+\xi_0'|-(\xi+\xi')(\xi_0+\xi_0')\r]^{1/2}\lesssim 1/r.
	\end{equation}
	Meanwhile some further investigation will imply the following estimate
	\begin{equation}\label{E:Quasi-orthogonality-6}
		|\xi|^{\alpha}+|\xi'|^{\alpha}-|\xi+\xi'|^{\alpha}/2^{\alpha-1} \sim {1}/{r^2}.
	\end{equation}
	Indeed, without loss of generality we may assume $|\xi|\geq |\xi'|$. If we define the auxiliary function
	\[F_{\xi'}(\xi):=|\xi|^{\alpha}+|\xi'|^{\alpha}-|\xi+\xi'|^{\alpha}/2^{\alpha-1},\]
	then for every fixed $\xi'\in\bR^d$ the function $F_{\xi'}(\xi)$ has non-degenerate critical point $\xi=\xi'$. In other words, the Hessian matrix $\mathrm{Hess} F_{\xi'}(\xi')$ is positive-definite and
	\[\nabla F_{\xi'}(\xi')=0.\]
	Also notice that $F_{\xi'}(\xi')=0$ at this point and $F_{\xi'}(\xi)$ is a smooth function. Moreover, since we have
	\[(\xi,\xi')\in\tau\times \tau', \quad \tau\sim\tau'\subset \mathcal{A}_1^{j_0},\]
	the condition \eqref{E:Angle condition} and Proposition \ref{P:Angle decomposition} then imply
	\begin{equation}\label{E:Quasi-orthogonality-7}
		\det \mathrm{Hess}F_{\xi'}(\xi) \sim 1.
	\end{equation}
	Hence when $\tau\sim\tau'\in \mathcal{D}_{1,r}$ which leads to $|\xi-\xi'|\sim 1/r$, we can use the multi-variable Taylor's theorem to obtain $|F_{\xi'}(\xi)|\sim 1/r^2$. This gives the desired estimate \eqref{E:Quasi-orthogonality-6}.
	
	Then we follow some similar arguments in the proof of \cite[Lemma 5.2]{COSS2021}. First, we can find a universal constant $C_1$ such that for every fixed $\tau\sim\tau'\in\mathcal{D}_{1,r}$, the number of corresponding $\rho\sim \rho'\in \mathcal{D}_{1,s}$ satisfies the following quantity bound
	\[\#\l\{(\rho, \rho'): \rho\sim\rho'\in \mathcal{D}_{1,s}, ~(\tilde{\tau}+\tilde{\tau}')\cap (\tilde{\rho}+\tilde{\rho}')\neq \emptyset\r\} \leq C_1.\]
	In fact, if $(\tilde{\tau}+\tilde{\tau}')\cap (\tilde{\rho}+\tilde{\rho}')\neq \emptyset$, it is not hard to see that the estimate \eqref{E:Quasi-orthogonality-6} implies $s\sim r$; and for each fixed $s$, the estimates \eqref{E:Quasi-orthogonality-1}, \eqref{E:Quasi-orthogonality-2} and \eqref{E:Quasi-orthogonality-5} can imply that for given $\tau$, the number of possible cubes $\rho\in\mathcal{D}_{1,s}$ is universally bounded; then for each fixed $\rho$, the number of $\rho'$ is obviously uniformly finite. Therefore we obtain the existence of the constant $C_1$.
	
	Second, note that \eqref{E:Quasi-orthogonality-6} gives the universal constant $c_2>0$ and $c_3>0$ with the following relation
	\begin{equation*}
		\tilde{\tau}+\tilde{\tau}' \subset \Gamma_{\tau, \tau'}, \quad \Gamma_{\tau, \tau'}:=\l\{(\xi, \eta)\in (\tau+\tau')\times \bR: |\xi|^{\alpha}/2^{\alpha-1}+c_2/r^2\leq \eta \leq |\xi|^{\alpha}/2^{\alpha-1}+c_3/r^2\r\}.
	\end{equation*}
	Meanwhile the estimates \eqref{E:Quasi-orthogonality-5} give the rectangle $R_{\tau,\tau'}\subset \bR^d$ such that $\tau+\tau'\subset R_{\tau,\tau'}$ with center $c(R_{\tau,\tau'})$ at the point $\gamma_0:=\xi_0+\xi_0'$ and every side length comparable to $1/r$, as well as one edge aligned with the vector $\gamma_0$. By a centered dilation $R_{\tau,\tau'}^{*}:=(1+c_4)\cdot R_{\tau,\tau'}$ with constant $c_4>0$ small enough independent of $(\tau,\tau')$, we can let the following sets still have bounded overlap
	\[\Sigma_{\tau, \tau'}:=\l\{(\xi, \eta)\in R_{\tau, \tau'}^{*}\times \bR: |\xi|^{\alpha}/2^{\alpha-1}+c_2/2r^2\leq \eta \leq |\xi|^{\alpha}/2^{\alpha-1}+2c_3/r^2\r\}.\]
	Hence we are able to decompose the collection $\{(\tau,\tau'):\tau\sim \tau'\subset \mathcal{A}_1^{j_0}\}$ into universal number of subsets $\l\{\mathscr{T}_m\r\}_{m=1}^{K_0}$, such that for each $\mathscr{T}_m$ there is a corresponding set $\{\Sigma_{\tau, \tau'}\}_m$ whose elements are pairwise disjoint. Thus, we may fix some $m=m_0$ and investigate one subset $\mathscr{T}_{m_0}$ from now on.
	
	Third, for $\gamma\in \bR^d$, we define $T(\gamma)$ to be the tangent plane of the surface $(\xi, |\xi|^{\alpha}/2^{\alpha-1})$ at the point $(\gamma, |\gamma|^{\alpha}/2^{\alpha-1})$ as follows
	\[T(\gamma):=\l\{(\gamma,|\gamma|^{\alpha}/2^{\alpha-1})+v: v\in\bR^{d+1}, ~v\bot (\alpha|\gamma|^{\alpha-2}\gamma/2^{\alpha-1},-1)\r\}.\]
	Let $(e_1,e_2,\ldots, e_{d+1})$ denote the canonical basis in $\bR^{d+1}$. Without loss of generality, we may assume the center $\gamma_0=|\gamma_0|e_d$. Consider the point $\gamma=ke_d$ which is very close to $\gamma_0$. In this case, the corresponding hyperplane can be computed as
	\[T(\gamma)=\l\{(\gamma,|\gamma|^{\alpha}/2^{\alpha-1})+(v_1, v_2,\ldots, v_d, v_d\alpha k^{\alpha-1}/2^{\alpha-1}): v_i\in\bR, ~i=1,2,\ldots, d\r\};\]
	and lifting the rectangle $R_{\tau,\tau'}$ to the tangent plane $T(\gamma)$ amounts to choosing
	\[|(v_1, v_2, \ldots, v_{d-1})|\lesssim 1/r, \quad v_d\lesssim 1/r.\]
	Hence we can precisely set $y=(v_1,v_2\ldots,v_{d-1})$ and assume
	\begin{equation} \label{E:Quasi-orthogonality-8}
		|y|\leq c_5/r, \quad |v_d| \leq c_6/r
	\end{equation}
	with the constants $c_5, c_6$ to be determined later. Under these assumptions, in the direction $e_{d+1}$, we can use the multi-variable Taylor's theorem to estimate the largest displacement between the tangent plane and the surface $(\xi,|\xi|^{\alpha}/2^{\alpha-1})$ as follows
	\[|(y, k+v_d)|^{\alpha}/2^{\alpha}- \l(k^{\alpha}/2^{\alpha-1}+ \alpha v_d k^{\alpha-1}/2^{\alpha-1}\r) \lesssim |(y, v_d)|^2\leq (c_5^2+c_6^2)/r^2.\]
	Here we have used the fact $k\sim 1$ which leads to all the second-order partial derivatives and the Hessian of this corresponding function $g(\xi)=|\xi|^{\alpha}/2^{\alpha-1}$ comparable to $1$. Hence we can choose some universal constants $c_5, c_6$ small enough such that this displacement is less than $\frac{c_3}{2r^2}$. This conclusion further gives the desired universal constant $K_1$ comparable to $(c_5)^{1-d}(c_6)^{-1}$, such that the rectangle $R_{\tau,\tau'}$ can be decomposed into $K_1$ smaller rectangles
	\[R_{\tau,\tau'}= \bigcup_{\ell=1}^{K_1} R_{\ell},\]
	where the smaller rectangles $R_{\ell}=R_{\ell}(\tau,\tau')$ have the same size with disjoint interiors and satisfy the condition \eqref{E:Quasi-orthogonality-8}.
	
	Then for each $\ell$, let $c_{\ell}$ be the center of the rectangle $P_{\ell}$ and let $T(R_{\ell})$ denote the lift of $R_{\ell}$ into the tangent plane $T(c_{\ell})$. Define the parallelepiped $P_{\ell}=P_{\ell}(\tau,\tau') \subset \bR^{d+1}$ as the sum-set
	\[P_{\ell}:=T(R_{\ell})+\l\{s e_{d+1}: \frac{c_2}{r^2}\leq s\leq \frac{3c_3}{2r^2}\r\}.\]
	Note that distinct $P_{\ell}$ have disjoint interiors and the following inclusion holds
	\[\l(\tilde{\tau}+\tilde{\tau}'\r)\subset \Gamma_{\tau, \tau'} \subset \l(\bigcup_{\ell=1}^{K_1} P_{\ell}\r).\]
	Moreover, by the construction of $R_{\tau,\tau'}^{*}$ and $R_{\ell}$, there exists $\beta>0$ such that $(1+\beta)\cdot R_{\ell} \subset R_{\tau,\tau'}^{*}$ holds for every $\ell\in \{1,2,\ldots,K_1\}$. Taking the aforementioned displacements into consideration, recalling the choice of constants $c_5$ and $c_6$, we can guarantee that
	\[(1+\beta)\cdot P_{\ell} \subset \Sigma_{\tau, \tau'},\]
	possibly choosing a smaller $\beta$ but still universal. This finishes the proof of our desired claim.
\end{proof}

One basic refinement of the original $\alpha$-Strichartz inequality \eqref{E:alpha-Strichartz} is the following annular orthogonality Lemma \ref{L:Annular orthogonality}. Indeed, a direct application of the Littlewood-Paley theory and the mixed-norm estimates \eqref{E:alpha-Strichartz} will give this refinement. Similar arguments can be found in \cite[Lemma 2.3]{JSS2017}. See also \cite[Proposition 2.1]{COSS2021} and \cite[Lemma 4.1]{KSV2012}. We omit the proof here for simplicity.
\begin{lemma}\label{L:Annular orthogonality}
	Let $d\geq 2$. We have the following annular orthogonality estimate
	\[\l\|[E_{\alpha}]f\r\|_{L_{t,x}^{q_0}(\bR^{d+1})}^{q_0} \lesssim \sup_{N \in 2^{\bZ}} \l\|[E_{\alpha}]f_N\r\|_{L_{t,x}^{q_0}(\bR^{d+1})}^{q_0-2} \|f\|_{L_x^2(\bR^d)}^2\]
	for every $f\in L^2(\bR^d)$, where $\widehat{f}_N:=\mathds{1}_{\mathcal{A}_N}\widehat{f}$.
\end{lemma}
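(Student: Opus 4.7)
The plan is to combine the classical Littlewood--Paley theory in the spatial variable with the $\alpha$-Strichartz inequality \eqref{E:alpha-Strichartz} applied to each annular piece individually. Write $F:=[E_\alpha]f$ and $F_N:=[E_\alpha]f_N$; since the spatial Fourier supports of the $F_N$ lie in pairwise disjoint dyadic annuli $\mathcal{A}_N$, the $L^{q_0}_x$-Littlewood--Paley square function theorem at each fixed $t$ yields
\[\|F(t,\cdot)\|_{L^{q_0}_x}^{q_0} \lesssim \l\|\l(\sum_{N\in 2^\bZ}|F_N(t,\cdot)|^2\r)^{1/2}\r\|_{L^{q_0}_x}^{q_0}.\]
Integrating in $t$ and writing $G:=(\sum_N |F_N|^2)^{1/2}$, the lemma reduces to bounding $\int G^{q_0}\,\ddd t\,\ddd x$ by $\sup_N\|F_N\|_{L^{q_0}_{t,x}}^{q_0-2}\cdot \|f\|_{L^2}^2$.

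The next step is to factor $G^{q_0}=G^2\cdot G^{q_0-2}=\l(\sum_N|F_N|^2\r)\cdot G^{q_0-2}$ and exploit the dimensional restriction $d\geq 2$, which forces $q_0\leq 4$ and hence $(q_0-2)/2\leq 1$. Under this condition the subadditivity $(\sum_M a_M)^{s}\leq \sum_M a_M^s$ valid for $s\in(0,1]$, applied to $a_M=|F_M|^2$, yields the pointwise inequality
\[G^{q_0-2} = \l(\sum_M|F_M|^2\r)^{(q_0-2)/2}\leq \sum_M |F_M|^{q_0-2}.\]
Plugging back and applying H\"older's inequality in the $(t,x)$-integration with conjugate exponents $(q_0/2,\,q_0/(q_0-2))$ for each pair $(N,M)$ leads, after extracting the supremum out of the $M$-summation, to the product estimate
\[\int G^{q_0}\,\ddd t\,\ddd x \;\lesssim\; \sup_M \|F_M\|_{L^{q_0}_{t,x}}^{q_0-2}\cdot \sum_N \|F_N\|_{L^{q_0}_{t,x}}^2.\]

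Finally, the $\alpha$-Strichartz inequality \eqref{E:alpha-Strichartz} applied to each annular piece gives $\|F_N\|_{L^{q_0}_{t,x}}\lesssim \|f_N\|_{L^2_x}$, and the disjointness of the spatial Fourier supports of the $f_N$ yields the $L^2$-orthogonality $\sum_N \|f_N\|_{L^2_x}^2 = \|f\|_{L^2_x}^2$. Combining everything gives the claimed bound.

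The main obstacle is the careful H\"older-type bookkeeping needed when extracting the sup factor without incurring an extraneous $\ell^1$ loss in the $M$-summation; a naive estimate would bound $\sum_M \|F_M\|^{q_0-2}$ by the sup times the number of active $M$'s, which is unbounded. The resolution is the standard mechanism of coupling the two sums $\sum_N$ and $\sum_M$ through the $L^{q_0/2}_{t,x}$-duality, as carried out in \cite[Lem.~2.3]{JSS2017}, \cite[Prop.~2.1]{COSS2021} and \cite[Lem.~4.1]{KSV2012}, and I expect it to carry over verbatim to the present fractional surface setting since only the single-piece $\alpha$-Strichartz estimate of \eqref{E:alpha-Strichartz} is used.
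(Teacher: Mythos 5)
Your overall strategy (Littlewood--Paley square function, then per-annulus Strichartz together with $\sum_N\|f_N\|_{L^2}^2=\|f\|_{L^2}^2$) is the same one the paper gestures at, and the reduction, valid for $d\geq2$ because $q_0\leq 4$, to bounding $\sum_{N,M}\int |F_N|^2|F_M|^{q_0-2}\,\ddd t\,\ddd x$ is fine. The genuine gap is exactly the step you flag and then defer: after applying H\"older with exponents $\bigl(q_0/2,\,q_0/(q_0-2)\bigr)$ to each pair $(N,M)$, the double sum decouples into $\bigl(\sum_N\|F_N\|_{L^{q_0}_{t,x}}^2\bigr)\bigl(\sum_M\|F_M\|_{L^{q_0}_{t,x}}^{q_0-2}\bigr)$, and there is no legitimate way to replace $\sum_M\|F_M\|_{L^{q_0}_{t,x}}^{q_0-2}$ by its supremum; so the displayed ``product estimate'' does not follow from the steps preceding it. Nor can the fix be ``H\"older-type bookkeeping'' or ``$L^{q_0/2}$-duality'': once each pair has been estimated by $\|F_N\|_{q_0}^2\|F_M\|_{q_0}^{q_0-2}$, all orthogonality between distinct annuli has been discarded, and no rearrangement of these scalar quantities recovers the sup-refined bound -- at best one recovers the unrefined Strichartz inequality (for $d\geq3$ not even that, since $\sum_M\|F_M\|_{q_0}^{q_0-2}$ need not converge).

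The missing idea, which is what \cite{JSS2017}, \cite{COSS2021} and \cite{KSV2012} actually supply, is off-diagonal decay coming from the frequency separation of distinct annuli: for $N\neq M$ one estimates $\int|F_N|^2|F_M|^{q_0-2}$ in mixed norms $L^a_tL^b_x$, using the non-diagonal members of the family \eqref{E:alpha-Strichartz} together with Bernstein's inequality (or, alternatively, a bilinear estimate with frequency separation), which produces a gain of the form $\min(N/M,M/N)^{\delta}$; keeping a small interpolation power retains a factor $\sup_K\|F_K\|_{q_0}^{q_0-2-\varepsilon}$, and the weighted double sum in $\|f_N\|_{L^2}\|f_M\|_{L^2}$ is then summed by Schur's test to give $\|f\|_{L^2}^2$. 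In particular your closing claim that ``only the single-piece (diagonal) estimate is used'' is not accurate -- the mixed-norm estimates with $q\neq r$ are essential, which is precisely why the paper phrases the ingredient as ``the mixed-norm estimates \eqref{E:alpha-Strichartz}.'' Those estimates are indeed available for the fractional surface, so the transference from the cited references should work, but this off-diagonal step has to be carried out; as written, your argument does not establish the lemma.
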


Based on this annular orthogonality result, we are able to focus on the annular case and aim to establish some suitable control for the item $\l\|[E_{\alpha}]f_N \r\|_{L_{t,x}^{q_0}}$. At this point, the advantage is that the truncated surface has non-zero Gaussian curvature. Hence we are able to apply Tao's classical bilinear restriction estimates in \cite{Tao2003}.
\begin{thm} \label{T:Bilinear estimates}
	Suppose that $\frac{d+3}{d+1}<p<\frac{d+2}{d}$ and $j_0\in \{1,2\ldots, K_{d,\alpha}\}$. Then for every $\tau\sim \tau'\subset \mathcal{A}_1^{j_0}$, the following bilinear estimates holds
	\[\l\|[e^{it|\nabla|^{\alpha}}]f_{\tau}\cdot [e^{it|\nabla|^{\alpha}}]f_{\tau'}\r\|_{L_{t,x}^p} \lesssim |\tau|^{1-\frac{q_0}{2p}} \|f_{\tau}\|_{L^2}\|f_{\tau'}\|_{L^2}\]
	for all $f\in L^2(\bR^d)$ satisfying $\supp{\widehat{f}}\subset \mathcal{A}_1$. Therefore, by interpolation there exists $s_0\in (1,2)$ such that
	\[\l\|[e^{it|\nabla|^{\alpha}}]f_{\tau}\cdot [e^{it|\nabla|^{\alpha}}]f_{\tau'}\r\|_{L_{t,x}^{q_0/2}} \lesssim |\tau|^{1-\frac{2}{s_0}} \|\widehat{f}_{\tau}\|_{L^{s_0}}\|\widehat{f}_{\tau'}\|_{L^{s_0}}.\]
\end{thm}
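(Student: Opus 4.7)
The plan is to reduce the bilinear estimate to Tao's bilinear restriction theorem for elliptic surfaces \cite{Tao2003} via a parabolic rescaling of the Whitney caps $\tau,\tau'$ to unit scale, and then to obtain the $L^{s_0}$ version by bilinear complex interpolation with a trivial $L^\infty$ bound.

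First I would verify the two geometric hypotheses needed to invoke Tao's theorem. For $\xi\in\mathcal{A}_1$, a direct computation of $\partial_i\partial_j(|\xi|^\alpha)=\alpha|\xi|^{\alpha-2}\delta_{ij}+\alpha(\alpha-2)|\xi|^{\alpha-4}\xi_i\xi_j$ shows that the Hessian is comparable to the identity (with constants depending only on $\alpha$), so the fractional surface $(\xi,|\xi|^{\alpha})$ is uniformly elliptic on $\mathcal{A}_1$. For transversality, the (unnormalized) normal at $(\xi,|\xi|^{\alpha})$ is $(-\alpha|\xi|^{\alpha-2}\xi,1)$; since $\tau\sim\tau'\subset\mathcal{A}_1^{j_0}$ forces $|\xi-\xi'|\sim\ell(\tau)$ with $|\xi|,|\xi'|\sim 1$, the normals to $\tilde\tau$ and $\tilde\tau'$ differ by a quantity of size $\sim\ell(\tau)$, and this becomes a unit-scale transversality after rescaling by $\ell(\tau)^{-1}$.

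Next I would perform the parabolic rescaling. Writing $\xi=c_\tau+\ell\eta$ with $\ell=\ell(\tau)=|\tau|^{1/d}$ and $\eta$ in the unit cube $Q$, and introducing new space-time coordinates $y=\ell x+t\,\alpha|c_\tau|^{\alpha-2}c_\tau\ell$ and $s=\ell^2 t$ (which removes the linear-in-$\eta$ contribution from $|c_\tau+\ell\eta|^\alpha$), the operator $[e^{it|\nabla|^\alpha}]f_\tau(x)$ transforms into a (modulated) extension operator over $Q$ on the smooth surface $\eta\mapsto(\eta,\ell^{-2}|c_\tau+\ell\eta|^\alpha)$; thanks to $\ell\ll 1$ and $|c_\tau|\sim 1$, this rescaled surface is smoothly elliptic with constants independent of $\tau$, and the analogous rescaled surface for $\tau'$ maintains the order-one transversality from the previous paragraph. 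Tao's bilinear restriction theorem, applied in ambient dimension $d+1$ (where the admissible range is exactly $p>\tfrac{d+3}{d+1}$), then gives
\[\bigl\|E_{\tau^*}\tilde f\cdot E_{\tau'^*}\tilde g\bigr\|_{L_{s,y}^p}\lesssim \|\tilde f\|_{L^2}\|\tilde g\|_{L^2}.\]
Undoing the change of variables, the Jacobian $\mathrm{d}t\,\mathrm{d}x=\ell^{-(d+2)}\mathrm{d}s\,\mathrm{d}y$ and the Fourier rescaling $\|f_\tau\|_{L^2}\sim \ell^{d/2}\|\tilde f\|_{L^2}$ combine to produce exactly the power $\ell^{d(1-q_0/(2p))}=|\tau|^{1-q_0/(2p)}$.

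For the interpolated bound I would combine the first estimate with the trivial $L^\infty$ control
\[\bigl\|[e^{it|\nabla|^\alpha}]f_\tau\cdot[e^{it|\nabla|^\alpha}]f_{\tau'}\bigr\|_{L^{\infty}_{t,x}}\leq \|\widehat f_\tau\|_{L^1}\|\widehat f_{\tau'}\|_{L^1}\]
via bilinear complex interpolation. Taking the interpolation parameter $\theta=1-2p/q_0\in(0,1)$ (permissible because $p<q_0/2=(d+2)/d$), the space-time exponent becomes $q_0/2$ while the Fourier-side exponent becomes $s_0=2/(1+\theta)\in(1,2)$; a direct bookkeeping check shows $(1-\theta)\bigl(1-q_0/(2p)\bigr)=-\theta=1-2/s_0$, which gives the claimed factor $|\tau|^{1-2/s_0}$.

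The main obstacle, as I see it, is the rescaling step rather than the invocation of Tao's theorem or the interpolation: because $|\xi|^\alpha$ is not polynomial for generic $\alpha\geq 2$, one must be careful in checking that the smoothness, ellipticity constants, and transversality of the rescaled surface are uniform over all Whitney pairs $\tau\sim\tau'\subset\mathcal{A}_1^{j_0}$. The annular restriction $|\xi|\sim 1$ together with the angle decomposition encoded in Proposition \ref{P:Angle decomposition} and condition \eqref{E:Angle condition} is precisely what makes these constants universal, which is why the theorem is formulated for $\mathcal{A}_1^{j_0}$ rather than for an arbitrary portion of the fractional surface.
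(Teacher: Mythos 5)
Your proposal is correct and follows essentially the same route as the paper: the paper likewise observes that $(\xi,|\xi|^{\alpha})$ over $\mathcal{A}_1$ is an elliptic-type compact surface in the sense of \cite{TVV1998}, invokes Tao's bilinear estimate \cite[Section 9]{Tao2003}, and obtains the $|\tau|$-factor by the standard parabolic rescaling argument (citing \cite[Theorem A.1]{FLS2016}), with the $L^{s_0}$ bound following by exactly the interpolation bookkeeping you carry out. One minor remark: the angular decomposition into $\mathcal{A}_1^{j_0}$ via Proposition \ref{P:Angle decomposition} and condition \eqref{E:Angle condition} is really needed for the quasi-orthogonality Lemma \ref{L:Quasi-orthogonality}, not for the uniformity of constants here, which only uses the annular ellipticity and the Whitney separation $|\xi-\xi'|\sim\ell(\tau)$.
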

\begin{proof}[\textbf{Proof of Theorem \ref{T:Bilinear estimates}}]
	This result follows from a standard parabolic rescaling argument by Tao's bilinear estimates on the paraboloid \cite{Tao2003}. Note that our surface $(\xi, |\xi|^{\alpha})$ restricted on the unit cube-annular $\mathcal{A}_1$ is a elliptic-type compact surface in the sense of \cite[Section 2]{TVV1998}. Hence we can use the bilinear estimates \cite[Section 9, third remark]{Tao2003} to deduce this desired conclusion. See \cite[Theorem A.1]{FLS2016} for further details on this parabolic rescaling argument.
\end{proof}

As mentioned above, on each annular, we are going to apply this bilinear estimate Theorem \ref{T:Bilinear estimates} to deduce the following annular refined $\alpha$-Strichartz estimate Lemma \ref{L:Annular refined alpha-Strichartz}. The arguments (from bilinear-type estimate to refined-type Strichartz estimate) are calssical, and similar arguments can be seen in the works such as \cite{BV2007,FS2018,Tao2009}. Here we follow the ideas in these works, and show the details for the convenience of the readers.
\begin{lemma}\label{L:Annular refined alpha-Strichartz}
	Let $d\geq 2$. There exists $\gamma\in\l(0,1\r)$ such that the following estimate
	\[\l\|[e^{it|\nabla|^{\alpha}}]f\r\|_{L_{t,x}^{q_0}} \lesssim \l[\sup_{r\in 2^{\bZ_{+}}} \sup_{\tau\in\mathcal{D}_{1,r}} |\tau|^{-1/2} \l\|[e^{it|\nabla|^{\alpha}}] f_{\tau}\r\|_{L_{t,x}^{\infty}}\r]^{\gamma} \|f\|_{L_x^2}^{1-\gamma}\]
	holds for all $f\in L^2(\bR^d)$ satisfying $\supp{\widehat{f}} \subset \mathcal{A}_1$. Moreover, by rescaling, the following estimate
	\[N^{\frac{\alpha-2}{q_0}} \l\|[e^{it|\nabla|^{\alpha}}]f\r\|_{L_{t,x}^{q_0}} \lesssim \l[\sup_{r\in 2^{\bZ_{+}}} \sup_{\tau\in\mathcal{D}_{N,r}} |\tau|^{-1/2} \l\|[e^{it|\nabla|^{\alpha}}] f_{\tau}\r\|_{L_{t,x}^{\infty}}\r]^{\gamma} \|f\|_{L_x^2}^{1-\gamma}\]
	holds for all $f\in L^2(\bR^d)$ satisfying $\supp{\widehat{f}} \subset \mathcal{A}_N$.
\end{lemma}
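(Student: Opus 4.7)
The plan is to carry out the standard bilinear-to-linear argument in the style of \cite{BV2007,FS2018,Tao2009}, in four steps. First, using \eqref{E:Angular divided} together with the elementary inequality $\sum_{j=1}^{K_{d,\alpha}}\|f_{\mathcal{A}_1^j}\|_2^{1-\gamma}\lesssim\|f\|_2^{1-\gamma}$ (by H\"older in $j$), I would reduce to the case $\supp\widehat{f}\subset\mathcal{A}_1^{j_0}$ for some fixed $j_0$. The Whitney-type decomposition preceding Lemma \ref{L:Quasi-orthogonality} then partitions the off-diagonal of $(\mathcal{A}_1^{j_0})^2$ by Whitney pairs, yielding pointwise a.e.\
\[|[e^{it|\nabla|^\alpha}]f|^2 = \sum_{\tau\sim\tau'\subset\mathcal{A}_1^{j_0}} [e^{it|\nabla|^\alpha}]f_\tau\cdot\overline{[e^{it|\nabla|^\alpha}]f_{\tau'}}.\]
Taking $L^{q_0/2}_{t,x}$ norms and invoking Lemma \ref{L:Quasi-orthogonality} then gives
\[\|[e^{it|\nabla|^\alpha}]f\|_{L^{q_0}}^{q_0} \lesssim \sum_{\tau\sim\tau'}\big\|[e^{it|\nabla|^\alpha}]f_\tau\cdot[e^{it|\nabla|^\alpha}]f_{\tau'}\big\|_{L^{q_0/2}}^{q_0/2}.\]

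Next, let $A$ denote the supremum on the right-hand side of the statement. I would estimate each Whitney term by combining Tao's bilinear bound of Theorem \ref{T:Bilinear estimates} (at some admissible $p\in((d+3)/(d+1),q_0/2)$) with the trivial pointwise bound $\|[e^{it|\nabla|^\alpha}]f_\tau\|_{L^\infty_{t,x}}\leq A|\tau|^{1/2}$ that follows straight from the definition of $A$. With $\theta:=2p/q_0\in(0,1)$, log-convexity of Lebesgue norms gives
\[\big\|[e^{it|\nabla|^\alpha}]f_\tau\cdot[e^{it|\nabla|^\alpha}]f_{\tau'}\big\|_{L^{q_0/2}} \lesssim A^{2(1-\theta)}\|f_\tau\|_2^{\theta}\|f_{\tau'}\|_2^{\theta},\]
with the powers of $|\tau|$ cancelling exactly (indeed $\theta(1-q_0/(2p))+(1-\theta)=0$). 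Raising to the $q_0/2$-th power and summing over Whitney pairs reduces the entire estimate to the sum bound
\[\sum_{\tau\sim\tau'\subset\mathcal{A}_1^{j_0}}\|f_\tau\|_2^p\|f_{\tau'}\|_2^p\lesssim\|f\|_2^{2p},\]
after which one obtains $\|[e^{it|\nabla|^\alpha}]f\|_{L^{q_0}}^{q_0}\lesssim A^{(1-\theta)q_0}\|f\|_2^{\theta q_0}$, which is the first inequality with $\gamma:=1-\theta\in(0,1)$.

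The main obstacle is precisely this Whitney-sum estimate. At the endpoint $p=2$ the identity $\sum_{\tau\sim\tau'}\|f_\tau\|_2^2\|f_{\tau'}\|_2^2=\|f\|_2^4$ is immediate because the Whitney pairs across all scales partition the off-diagonal of $(\mathcal{A}_1^{j_0})^2$. For $p\in(1,2)$, which is the whole admissible bilinear range, I would decompose the cubes at each scale $r$ into dyadic levels according to the size of $\|f_\tau\|_2/\|f\|_2$, bound the cardinality of each level through the single-scale identity $\sum_{\ell(\tau)=1/r}\|f_\tau\|_2^2=\|f\|_2^2$, and then sum the resulting geometric series both in the dyadic-level index and in the scale $r$. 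The quantitative input that makes this work is that $p>(d+3)/(d+1)$ forces $p>1+1/d$, which is precisely the condition needed for the scale-summation to converge.

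Finally, the $\mathcal{A}_N$ version follows by the parabolic rescaling $\widehat{g}(\xi):=N^{d/2}\widehat{f}(N\xi)$: one checks $\supp\widehat{g}\subset\mathcal{A}_1$, $\|g\|_2=\|f\|_2$, and $[e^{it|\nabla|^\alpha}]g(t,x)=N^{-d/2}[e^{it|\nabla|^\alpha}]f(t/N^\alpha,x/N)$. Applying the $\mathcal{A}_1$-estimate to $g$ and undoing the change of variables in the $L^{q_0}_{t,x}$ norm as well as in the supremum produces the $N^{(\alpha-2)/q_0}$ prefactor exactly (from the computation $d/2-(d+\alpha)/q_0=(2-\alpha)/q_0$), with the right-hand-side supremum transforming into the one over $\tau\in\mathcal{D}_{N,r}$.
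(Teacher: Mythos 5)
Your reduction is faithful to the paper's outline (angular reduction, Whitney decomposition, Lemma \ref{L:Quasi-orthogonality}, Tao's bilinear estimate interpolated against $L^\infty$ through the quantity $A$, with the $|\tau|$-powers cancelling), but the step you yourself flag as the main obstacle is a genuine gap: the Whitney-sum bound
\[
\sum_{\tau\sim\tau'\subset\mathcal{A}_1^{j_0}}\|f_\tau\|_{L^2}^{p}\,\|f_{\tau'}\|_{L^2}^{p}\;\lesssim\;\|f\|_{L^2}^{2p},\qquad 1<p<2,
\]
is false, and no level-set/cardinality argument can repair it, because the failure is not about how many cubes live at one scale but about the unbounded number of scales. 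Take $\widehat f$ supported in $\mathcal{A}_1^{j_0}$ consisting of a bump $B_0$ of very small radius centred at a point $\xi_0$, carrying mass $\|f_{B_0}\|_{L^2}^2=\tfrac12$, together with bumps $A_k$, $k=1,\dots,K$, of radius much smaller than $2^{-k}$, centred at distance about $2^{-k}$ from $\xi_0$ in generic position, each carrying mass $\tfrac1{2K}$. For each $k$ the pair of centres lies in a unique Whitney pair $\tau_k\sim\tau_k'$ whose scale is comparable to $2^{-k-N_{d,\alpha}}$, with $B_0\subset\tau_k$ and $A_k\subset\tau_k'$, and these pairs are distinct for $k$ running along a fixed arithmetic progression. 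Hence the left-hand side is at least of order $\sum_{k}(\tfrac12)^{p/2}(\tfrac1{2K})^{p/2}\sim K^{1-p/2}$, which diverges as $K\to\infty$ while $\|f\|_{L^2}=1$. The single-scale identity $\sum_{\ell(\tau)=1/r}\|f_\tau\|_{L^2}^2=\|f\|_{L^2}^2$ only gives a bound $\|f\|_{L^2}^{2p}$ per scale, and nothing forces decay in $r$: the cube containing $B_0$ carries mass at least $\tfrac12$ at \emph{every} scale. Your heuristic with spread-out $\widehat f$ (per-scale contribution $r^{d(1-p)}$, whence the condition $p>1+1/d$) is exactly the regime that is harmless; concentration is what defeats the estimate, uniformly in $p<2$.

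This is precisely why the paper does not interpolate every Whitney term against $A$. It first applies H\"older in the pair-sum, splitting off only a supremum factor $\sup_{\tau\sim\tau'}\|[e^{it|\nabla|^{\alpha}}]f_\tau\,[e^{it|\nabla|^{\alpha}}]f_{\tau'}\|_{L^{q_0/2}}^{(q_0-s)/(2q_0)}$ with $1<s<q_0$; only this supremum is interpolated between $L^p$ and $L^\infty$, producing the power of $A$, while the remaining $\ell^s$-sum is estimated by the second, interpolated form of Theorem \ref{T:Bilinear estimates}, which replaces $\|f_\tau\|_{L^2}$ by $|\tau|^{1/2-1/s_0}\|\widehat f_\tau\|_{L^{s_0}}$ with $s_0\in(1,2)$, and then by the cross-scale summation bound $\sum_{\tau\subset\mathcal{A}_1}\big(|\tau|^{1-2/s_0}\|\widehat f_\tau\|_{L^{s_0}}^2\big)^{s}\lesssim\|f\|_{L^2}^{2s}$ of \cite[Theorem 1.3]{BV2007}. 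By H\"older this refined quantity is never larger than $\|f_\tau\|_{L^2}^2$, and for a cube much larger than the bump it contains it is strictly smaller by a factor $(|B|/|\tau|)^{2/s_0-1}$; that extra decay is exactly what makes the sum over all scales converge and what your $L^2$-based sum lacks. So to close your argument you must import the $L^{s_0}$-refined bilinear estimate together with the B\'egout--Vargas summation (or an equivalent substitute); the rest of your proposal, including the final rescaling computation for $\mathcal{A}_N$, is fine.
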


\begin{proof}[\textbf{Proof of Lemma \ref{L:Annular refined alpha-Strichartz}}]
	Due to the relation \eqref{E:Angular divided}, it suffices to fix some $j_0\in\{1,2,\ldots,K_{d,\alpha}\}$ and prove that
	\begin{equation}\label{E:Annular refined-1}
		\l\|[e^{it|\nabla|^{\alpha}}] f_{\mathcal{A}_1^{j_0}}\r\|_{L_{t,x}^{q_0}} \lesssim \l[\sup_{r\in 2^{\bZ_{+}}} \sup_{\tau\in\mathcal{D}_{1,r}} |\tau|^{-1/2} \l\|[e^{it|\nabla|^{\alpha}}] f_{\tau}\r\|_{L_{t,x}^{\infty}}\r]^{\gamma} \|f\|_{L_x^2}^{1-\gamma}.
	\end{equation}
	Our strategy is using the bilinear estimates Theorem \ref{T:Bilinear estimates}. Thus, the Whitney-type decomposition gives
	\[\l\|[e^{it|\nabla|^{\alpha}}] f_{\mathcal{A}_1^{j_0}}\r\|_{L_{t,x}^{q_0}} \leq \l\|\sum_{\tau\sim\tau' \subset \mathcal{A}_1^{j_0}} [e^{it|\nabla|^{\alpha}}] f_{\tau} \cdot [e^{it|\nabla|^{\alpha}}] f_{\tau'}\r\|_{L_{t,x}^{q_0/2}}^{1/2}.\]
	Then the quasi-orthogonality Lemma \ref{L:Quasi-orthogonality} and H\"older's inequality imply that
	\begin{align}
		\l\|\sum_{\tau\sim\tau' \subset \mathcal{A}_1^{j_0}} [e^{it|\nabla|^{\alpha}}] f_{\tau} \cdot [e^{it|\nabla|^{\alpha}}] f_{\tau'}\r\|_{L_{t,x}^{q_0/2}}^{1/2} &\lesssim \l( \sum_{\tau\sim\tau' \subset \mathcal{A}_1^{j_0}} \l\|[e^{it|\nabla|^{\alpha}}] f_{\tau} \cdot [e^{it|\nabla|^{\alpha}}] f_{\tau'} \r\|_{L_{t,x}^{q_0/2}}^{q_0} \r)^{\frac{1}{2q_0}} \notag\\
		&\leq \sup_{\tau\sim \tau' \subset \mathcal{A}_1^{j_0}} \l\|[e^{it|\nabla|^{\alpha}}] f_{\tau} \cdot [e^{it|\nabla|^{\alpha}}] f_{\tau'} \r\|_{L_{t,x}^{q_0/2}}^{\frac{q_0-s}{2q_0}} \label{E:Annular refined-2}\\
		&\quad \times \l(\sum_{\tau\sim\tau' \subset \mathcal{A}_1^{j_0}} \l\|[e^{it|\nabla|^{\alpha}}] f_{\tau} \cdot [e^{it|\nabla|^{\alpha}}] f_{\tau'} \r\|_{L_{t,x}^{q_0/2}}^{s}\r)^{\frac{1}{2q_0}}.\label{E:Annular refined-3}
	\end{align}
	Here the inequality is valid for all $0<s<q_0$, though we will choose $1<s<q_0$ later. In the remainder of this proof, we will estimate \eqref{E:Annular refined-2} and \eqref{E:Annular refined-3} by different ways which will finally lead to the desired result \eqref{E:Annular refined-1}.
	
	Firstly for the term \eqref{E:Annular refined-2}, we have that
	\begin{align*}
		\l\|[e^{it|\nabla|^{\alpha}}] f_{\tau} \cdot [e^{it|\nabla|^{\alpha}}] f_{\tau'} \r\|_{L_{t,x}^{q_0/2}} &\leq \l(|\tau|^{-1} \l\|[e^{it|\nabla|^{\alpha}}] f_{\tau} \cdot [e^{it|\nabla|^{\alpha}}] f_{\tau'} \r\|_{L_{t,x}^{\infty}}\r)^{1-\frac{2p}{q_0}} \\
		&\quad \times \l(|\tau|^{\frac{q_0}{2p}-1} \l\|[e^{it|\nabla|^{\alpha}}] f_{\tau} \cdot [e^{it|\nabla|^{\alpha}}] f_{\tau'} \r\|_{L_{t,x}^p}\r)^{\frac{2p}{q_0}},
	\end{align*}
	where $\frac{d+3}{d+1}<p<\frac{d+2}{d}$. Then the bilinear estimates Theorem \ref{T:Bilinear estimates} gives the following estimate
	\[\l\|[e^{it|\nabla|^{\alpha}}] f_{\tau} \cdot [e^{it|\nabla|^{\alpha}}] f_{\tau'} \r\|_{L_{t,x}^{q_0/2}} \lesssim \l(|\tau|^{-1/2} \|[e^{it|\nabla|^{\alpha}}] f_{\tau}\|_{L_{t,x}^\infty}\r)^{1-\frac{2p}{q_0}} \l(|\tau'|^{-1/2} \|[e^{it|\nabla|^{\alpha}}]f_{\tau'}\|_{L_{t,x}^\infty}\r)^{1-\frac{2p}{q_0}} \|f\|_{L_x^2}^{\frac{4p}{q_0}},\]
	which immediately means
	\begin{equation}\label{E:Annular refined-4}
		\sup_{\tau\sim \tau' \subset \mathcal{A}_1^{j_0}} \l\|[e^{it|\nabla|^{\alpha}}] f_{\tau} \cdot [e^{it|\nabla|^{\alpha}}] f_{\tau'} \r\|_{L_{t,x}^{q_0/2}} \lesssim \l[\sup_{r\in 2^{\bZ_{+}}} \sup_{\tau\in\mathcal{D}_{1,r}} |\tau|^{-1/2} \|[e^{it|\nabla|^{\alpha}}] f_{\tau}\|_{L_{t,x}^{\infty}}\r]^{2-\frac{4p}{q_0}} \|f\|_{L_x^2}^{\frac{4p}{q_0}}.
	\end{equation}
	For the term \eqref{E:Annular refined-3}, a standard application of bilinear estimates Theorem \ref{T:Bilinear estimates} concludes that
	\begin{align*}
		\sum_{\tau\sim\tau' \subset \mathcal{A}_1^{j_0}} \l\|[e^{it|\nabla|^{\alpha}}] f_{\tau} \cdot [e^{it|\nabla|^{\alpha}}] f_{\tau'} \r\|_{L_{t,x}^{q_0/2}}^{s} &\lesssim \sum_{\tau\sim\tau' \subset \mathcal{A}_1^{j_0}} \l[|\tau|^{1-2/s_0} \|\widehat{f}_{\tau}\|_{L^{s_0}} \|\widehat{f}_{\tau'}\|_{L^{s_0}}\r]^s \\
		& \lesssim \sum_{\tau\sim\tau' \subset \mathcal{A}_1^{j_0}} \l[|\tau|^{1-2/s_0} \|\widehat{f}_{\tau}\|_{L^{s_0}}^2\r]^s \\
		& \lesssim \sum_{\tau \subset \mathcal{A}_1} \l[|\tau|^{1-2/s_0} \|\widehat{f}_{\tau}\|_{L^{s_0}}^2\r]^s \\
		&\lesssim \|f\|_{L^2}^{2s},
	\end{align*}
	where in the last inequality we have used \cite[Theorem 1.3]{BV2007} with $s>1$, see also \cite[p. 279]{Tao2009}. Thus, this estimate and the previous \eqref{E:Annular refined-4} directly imply our desired conclusion \eqref{E:Annular refined-1}.
\end{proof}

Finally, we are able to show our main result in this section, which is the following refined $\alpha$-Strichartz proposition.
\begin{prop}\label{P:Refined alpha-Strichartz}
	Let $d\geq2$. There exists $\theta\in (0,1)$ such that the following estimate
	\begin{equation}\label{P:Refined alpha-Strichartz-1}
		\l\|[E_{\alpha}]f\r\|_{L_{t,x}^{q_0}} \lesssim \l[\sup_{Q\in\mathcal{D}} |Q|^{-\frac{1}{2}} \l\|[e^{it|\nabla|^{\alpha}}]f_Q\r\|_{L_{t,x}^{\infty}}\r]^{\theta} \|f\|_{L_x^2}^{1-\theta}
	\end{equation}
	holds for all $f\in L^2(\bR^d)$.
\end{prop}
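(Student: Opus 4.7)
The plan is to combine the two preceding results, Lemma~\ref{L:Annular orthogonality} and Lemma~\ref{L:Annular refined alpha-Strichartz}, in a direct two-step chain. Lemma~\ref{L:Annular orthogonality} reduces the full Strichartz norm to the supremum over single annuli; Lemma~\ref{L:Annular refined alpha-Strichartz} then controls each annular piece by an annular-dyadic $L^\infty$ supremum, which in turn is dominated by the global dyadic supremum appearing in \eqref{P:Refined alpha-Strichartz-1}. The weight $|\xi|^{(\alpha-2)/q_0}$ built into $[E_\alpha]$ will be absorbed by the fact that it is comparable to $N^{(\alpha-2)/q_0}$ on $\mathcal{A}_N$.

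First I would invoke Lemma~\ref{L:Annular orthogonality} to write
\[
	\l\|[E_{\alpha}]f\r\|_{L_{t,x}^{q_0}}^{q_0} \lesssim \sup_{N\in 2^{\bZ}} \l\|[E_{\alpha}]f_N\r\|_{L_{t,x}^{q_0}}^{q_0-2} \|f\|_{L_x^2}^{2}.
\]
Since the Fourier multiplier $|\xi|^{(\alpha-2)/q_0}$ is smooth on $\mathcal{A}_N$ and pointwise comparable to $N^{(\alpha-2)/q_0}$ there, a standard smooth-cutoff / multiplier estimate (uniform in $N$ by scaling) yields
\[
	\l\|[E_{\alpha}]f_N\r\|_{L_{t,x}^{q_0}} \lesssim N^{\frac{\alpha-2}{q_0}} \l\|[e^{it|\nabla|^{\alpha}}] f_N\r\|_{L_{t,x}^{q_0}}.
\]
This pushes the $D^{(\alpha-2)/q_0}$ onto an explicit $N^{(\alpha-2)/q_0}$ factor, which is precisely the factor dressing the left-hand side of the rescaled form of Lemma~\ref{L:Annular refined alpha-Strichartz}.

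Next, applying the rescaled Lemma~\ref{L:Annular refined alpha-Strichartz} to $f_N$ (whose Fourier transform lives in $\mathcal{A}_N$), and noting $(f_N)_\tau = f_\tau$ for $\tau\subset\mathcal{A}_N$, gives
\[
	N^{\frac{\alpha-2}{q_0}} \l\|[e^{it|\nabla|^{\alpha}}] f_N \r\|_{L_{t,x}^{q_0}} \lesssim \l[\sup_{r\in 2^{\bZ_{+}}} \sup_{\tau\in \mathcal{D}_{N,r}} |\tau|^{-1/2} \l\|[e^{it|\nabla|^{\alpha}}] f_\tau\r\|_{L_{t,x}^{\infty}}\r]^{\gamma} \|f_N\|_{L_x^2}^{1-\gamma}.
\]
Because $\bigcup_{N,r} \mathcal{D}_{N,r} \subset \mathcal{D}$, the bracketed quantity is bounded by
\[
	A := \sup_{Q \in \mathcal{D}} |Q|^{-1/2} \l\|[e^{it|\nabla|^{\alpha}}] f_Q\r\|_{L_{t,x}^{\infty}},
\]
uniformly in $N$. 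Combining these bounds and using $\|f_N\|_{L^2} \le \|f\|_{L^2}$ in the annular-orthogonality inequality yields
\[
	\l\|[E_{\alpha}]f\r\|_{L_{t,x}^{q_0}}^{q_0} \lesssim A^{\gamma(q_0-2)}\, \|f\|_{L^2_x}^{q_0 - \gamma(q_0-2)},
\]
so that \eqref{P:Refined alpha-Strichartz-1} holds with $\theta = \gamma(q_0-2)/q_0 \in (0,1)$.

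I do not expect any genuinely new obstacle in this step: all of the substantive work (the degeneracy of curvature at the origin, the angular decomposition, the quasi-orthogonality, and the bilinear-to-linear passage) has already been absorbed into Lemmas~\ref{L:Annular orthogonality} and \ref{L:Annular refined alpha-Strichartz}. The only point requiring mild care is the uniform-in-$N$ control of the Fourier multiplier $|\xi|^{(\alpha-2)/q_0}$ on $\mathcal{A}_N$, which is routine by scaling and the smoothness of that symbol away from the origin.
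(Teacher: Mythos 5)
Your proposal is correct and follows essentially the same route as the paper: the paper's proof is exactly the chain ``annular orthogonality (Lemma \ref{L:Annular orthogonality}) $+$ rescaled annular refined estimate (Lemma \ref{L:Annular refined alpha-Strichartz}) $+$ Bernstein's inequality,'' where Bernstein plays the role of your multiplier step converting $[D^{\frac{\alpha-2}{q_0}}]$ into the factor $N^{\frac{\alpha-2}{q_0}}$ on each annulus $\mathcal{A}_N$. Your bookkeeping of the exponents, giving $\theta=\gamma(q_0-2)/q_0\in(0,1)$, is also consistent with the stated conclusion.
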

\begin{proof}[\textbf{Proof of Proposition \ref{P:Refined alpha-Strichartz}}]
	Combining the annular orthogonality Lemma \ref{L:Annular orthogonality} and annular refined $\alpha$-Strichartz Lemma \ref{L:Annular refined alpha-Strichartz}, we directly obtain this conclusion by applying Bernstein's inequality.
\end{proof}

There are standard arguments to deduce the following non-zero weak limit result Corollary \ref{C:Non-zero weak limit} from the aforementioned Proposition \ref{P:Refined alpha-Strichartz}. In fact, the $L_{t,x}^{\infty}$-norm in \eqref{P:Refined alpha-Strichartz-1} gives space-time translation parameters and the supremum of dyadic cubes gives scaling-frequency parameters. Readers can see \cite[Corollary 3.2]{FLS2016} for further details and the detailed proof are omitted here for simplicity.
\begin{coro}\label{C:Non-zero weak limit}
	Let $(f_n)$ be a bounded sequence in $L^2(\bR^d)$ with $d\geq 2$. If $\|[E_{\alpha}]f_n\|_{L_{t,x}^{q_0}}\nrightarrow 0$, then there exists $(t_n,x_n,\xi_n, h_n) \subset \bR\times \bR^d \times \bR^d \times \bR_{+}$ with $h_n\|\xi_n\|_{\min}\geq 1/2$, such that up to subsequences
	\[\widehat{[g_n]f_n}\l(\xi+h_n\xi_n\r) \rightharpoonup \widehat{V}\]
	in weak topology of $L^2(\bR^d)$ with $\widehat{V}\neq 0$. Moreover, if $\|f_n\|_{L_x^2}\leq B$ and $\limsup_{n\to\infty} \|E_{\alpha}f_n\|_{L_{t,x}^{q_0}}\geq A$, then we have
	\[\|V\|_{L_x^2}\geq CA^{\beta}B^{-\gamma},\]
	where $C, \beta, \gamma$ depend only on the dimension $d$ and $\alpha$.
\end{coro}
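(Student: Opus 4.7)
The plan is to extract from Proposition \ref{P:Refined alpha-Strichartz} a dyadic cube and a space-time point where the $L_{t,x}^\infty$-factor is nearly saturated, and then translate this pointwise information into a lower bound for an $L^2$-inner product of the rescaled sequence against a fixed bump, yielding a nontrivial weak limit. First, since $\limsup_n \|[E_\alpha]f_n\|_{L_{t,x}^{q_0}} \geq A$, I would pass to a subsequence on which $\|[E_\alpha]f_n\|_{L_{t,x}^{q_0}} \geq A/2$. Combined with $\|f_n\|_{L^2}\leq B$ and the refined estimate \eqref{P:Refined alpha-Strichartz-1}, this forces
\[
\sup_{Q\in\mathcal{D}} |Q|^{-1/2} \l\|[e^{it|\nabla|^\alpha}](f_n)_Q\r\|_{L_{t,x}^\infty} \gtrsim \varepsilon_0 := c\l(A/B^{1-\theta}\r)^{1/\theta}.
\]
For each $n$ I would then pick a dyadic cube $Q_n$ with centre $c_n$ and side length $\ell_n$, together with a point $(t_n',x_n')\in\bR\times\bR^d$ nearly attaining this supremum, so that $\ell_n^{-d/2}\bigl|[e^{it|\nabla|^\alpha}](f_n)_{Q_n}(t_n',x_n')\bigr|\geq \varepsilon_0/2$.

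Next, I would choose the group parameters by setting $h_n := 1/\ell_n$, $\xi_n := c_n$, $x_n := x_n'$, and $t_n := t_n'$, matching them to the data extracted above. A change of variable $\xi = c_n + \ell_n\eta$ in the Fourier-side integral defining $[e^{it|\nabla|^\alpha}](f_n)_{Q_n}(t_n',x_n')$, together with the explicit formula
\[
\widehat{[g_n]f_n}(\eta + h_n\xi_n) = \ell_n^{d/2}\, e^{i\ell_n x_n\eta + i x_n c_n + i t_n |\ell_n\eta + c_n|^\alpha}\, \widehat{f_n}(\ell_n\eta + c_n),
\]
causes all phases to cancel exactly (this is precisely why one matches $x_n=x_n'$ and $t_n=t_n'$), reducing the identity to
\[
\ell_n^{-d/2}\,[e^{it|\nabla|^\alpha}](f_n)_{Q_n}(t_n',x_n') = (2\pi)^{-d}\int_{[-1/2,1/2]^d} \widehat{[g_n]f_n}(\eta + h_n\xi_n)\,\ddd\eta.
\]
Since the left-hand side has modulus at least $\varepsilon_0/2$, the rescaled Fourier-side sequence pairs uniformly nontrivially against the fixed bump $\mathds{1}_{[-1/2,1/2]^d}\in L^2(\bR^d)$.

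Finally, the elements of $G$ act as $L^2$-isometries, so the sequence $\widehat{[g_n]f_n}(\cdot + h_n\xi_n)$ is bounded in $L^2$ by $B$, and Banach--Alaoglu supplies a further subsequence converging weakly to some $\widehat{V}$. Passing to the weak limit in the integral identity above yields $|\langle \widehat{V},\mathds{1}_{[-1/2,1/2]^d}\rangle|\gtrsim \varepsilon_0$, whence $\widehat{V}\neq 0$ and Cauchy--Schwarz gives $\|V\|_{L^2}\gtrsim A^{1/\theta}B^{-(1-\theta)/\theta}$, the claimed bound with $\beta=1/\theta$ and $\gamma=(1-\theta)/\theta$. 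The side condition $h_n\|\xi_n\|_{\min}\geq 1/2$ is automatic from classical dyadic geometry, as every coordinate of the centre of a dyadic cube of side length $\ell_n$ has absolute value at least $\ell_n/2$, so $h_n\|\xi_n\|_{\min}=\|c_n\|_{\min}/\ell_n\geq 1/2$. I do not anticipate a serious obstacle; the one point requiring real care is the exact phase cancellation in the second step, which reduces to routine bookkeeping once the group parameters are correctly matched to $(Q_n,t_n',x_n')$.
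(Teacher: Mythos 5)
Your proposal is correct and follows essentially the argument the paper has in mind (it omits the proof, pointing to \cite[Corollary 3.2]{FLS2016}): extract a near-extremizing dyadic cube and space-time point from the refined estimate of Proposition \ref{P:Refined alpha-Strichartz}, encode them as the symmetry parameters $(h_n,\xi_n,x_n,t_n)$, and pair the rescaled Fourier transforms against the indicator of the unit cube to force a nonzero weak limit with the quantitative bound $\beta=1/\theta$, $\gamma=(1-\theta)/\theta$. Your verification of $h_n\|\xi_n\|_{\min}\geq 1/2$ from the half-integer coordinates of dyadic centers and the phase-matching identity are exactly the intended bookkeeping.
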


\section{Local smoothing and local convergence} \label{S:Local smoothing and local convergence}
In this section we show some local smoothing property for the solution $[e^{it|\nabla|^{\alpha}}]f$. Then this property further deduces some local convergence and pointwise convergence results which will provide the desired input for Br\'ezis-Lieb type lemma \cite[Lemma 3.1]{FLS2016}. Similar arguments can also be found in \cite{FLS2016,FS2018}. Our local smoothing consequence is the following lemma.
\begin{lemma}\label{L:Local smooth}
	Let $\phi \in L^1(\bR^d)$ be a Schwartz function. Then for all $f\in L^2(\bR^d)$, there holds
	\[\int_{\bR^{d+1}} \phi(x) \l|[D^{\frac{\alpha-1}{2}}] [e^{it|\nabla|^{\alpha}}]f(x)\r|^2 \ddd x\ddd t \lesssim_{\phi} \|f\|_{L^2}.\]
\end{lemma}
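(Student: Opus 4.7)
\noindent\textbf{Proof plan for Lemma \ref{L:Local smooth}.} (Note: the LHS is quadratic in $f$, so I read the RHS as $\|f\|_{L^2}^2$.) The strategy is to apply Plancherel in time after expressing the evolution in polar coordinates, reducing the estimate to a kernel operator on $\bS^{d-1}$ whose norm decays fast enough to be absorbed by the radial weight produced by the fractional derivative.

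First, I would write, via the inverse Fourier formula,
\[
[D^{\frac{\alpha-1}{2}}][e^{it|\nabla|^{\alpha}}]f(x)=(2\pi)^{-d}\int_0^{\infty}\int_{\bS^{d-1}}e^{ix\rho\omega+it\rho^{\alpha}}\,\widehat{f}(\rho\omega)\,\rho^{(\alpha-1)/2}\,\rho^{d-1}\,\ddd\omega\,\ddd\rho.
\]
Changing variables $s=\rho^{\alpha}$ turns this into a Fourier integral in $t$, so Plancherel in $t$ gives
\[
\int_{\bR}\bigl|[D^{\frac{\alpha-1}{2}}][e^{it|\nabla|^{\alpha}}]f(x)\bigr|^2\,\ddd t = c_{d,\alpha}\int_0^{\infty}\Bigl|\int_{\bS^{d-1}}e^{ix\rho\omega}\widehat{f}(\rho\omega)\,\ddd\omega\Bigr|^2\rho^{2d-2}\,\ddd\rho,
\]
where a direct exponent count shows that the precise power of $\rho$ produced by the $D^{(\alpha-1)/2}$ factor, combined with the Jacobians of the polar and $s=\rho^{\alpha}$ changes of variable, is exactly $\rho^{2d-2}$. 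This is the key arithmetic step: the factor $D^{(\alpha-1)/2}$ is chosen so that after Plancherel in $t$ the remaining radial weight matches the measure $\rho^{d-1}\,\ddd\rho\,\ddd\omega$ of $\|\widehat f\|_{L^2}^2$ with exactly one extra factor of $\rho^{d-1}$ to spare, which will be absorbed by the decay of $\widehat\phi$.

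Next I would integrate against $\phi(x)$ and expand the square, producing
\[
\int_{\bR^{d+1}}\phi(x)\bigl|[D^{\frac{\alpha-1}{2}}][e^{it|\nabla|^{\alpha}}]f\bigr|^2 = c_{d,\alpha}\int_0^{\infty}\rho^{2d-2}\int_{\bS^{d-1}\times\bS^{d-1}}\widehat{\phi}\bigl(-\rho(\omega-\omega')\bigr)\widehat f(\rho\omega)\overline{\widehat f(\rho\omega')}\,\ddd\omega\,\ddd\omega'\,\ddd\rho.
\]
Viewing the inner double integral as a quadratic form on $L^2(\bS^{d-1})$ with kernel $K_{\rho}(\omega,\omega')=\widehat\phi(-\rho(\omega-\omega'))$, Schur's test yields the operator norm bound
\[
\|T_{\rho}\|_{L^2(\bS^{d-1})\to L^2(\bS^{d-1})}\lesssim \sup_{\omega}\int_{\bS^{d-1}}\bigl|\widehat\phi(\rho(\omega-\omega'))\bigr|\,\ddd\omega'.
\]
For $\rho\leq 1$ this is bounded by $\|\widehat\phi\|_{L^{\infty}}|\bS^{d-1}|\lesssim_{\phi}1$; for $\rho\geq 1$, a tangent-plane parametrization near $\omega$ gives the Jacobian $\rho^{-(d-1)}$, so the bound becomes $\lesssim_{\phi}\rho^{-(d-1)}\|\widehat\phi\|_{L^1}$, where Schwartz decay of $\phi$ makes $\widehat\phi\in L^1(\bR^d)$.

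Combining, the final bound is
\[
\lesssim_{\phi}\int_0^{\infty}\rho^{2d-2}\min\bigl(1,\rho^{-(d-1)}\bigr)\int_{\bS^{d-1}}|\widehat f(\rho\omega)|^2\,\ddd\omega\,\ddd\rho\leq \int_0^{\infty}\rho^{d-1}\int_{\bS^{d-1}}|\widehat f(\rho\omega)|^2\,\ddd\omega\,\ddd\rho\simeq\|f\|_{L^2}^2,
\]
since for $\rho\leq 1$ the bound $\rho^{2d-2}\leq\rho^{d-1}$ holds trivially. The only delicate point in the argument is the Schur kernel estimate on $\bS^{d-1}$, and in particular the tangent-plane change of variables that produces the sharp $\rho^{-(d-1)}$ decay; everything else is bookkeeping with Plancherel and polar coordinates. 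Fubini and the exchange of integrals can be justified by first assuming $f\in\mathscr{S}(\bR^d)$ and passing to the limit by density.
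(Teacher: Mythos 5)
Your proof is correct and takes essentially the same route as the paper: Plancherel in time reduces the weighted space-time norm to a bilinear form supported on $\{|\xi|^{\alpha}=|\xi'|^{\alpha}\}$ (your polar change of variables $s=\rho^{\alpha}$ is just the explicit form of the paper's delta function, and the exponent bookkeeping yielding the weight $\rho^{2d-2}$ matches), which is then controlled by Schur's test together with the $\rho^{-(d-1)}$ decay of $\sup_{\omega}\int_{\bS^{d-1}}\l|\widehat{\phi}\l(\rho(\omega-\omega')\r)\r|\,\ddd\omega'$ furnished by the Schwartz decay of $\widehat{\phi}$ --- exactly the paper's mechanism. Two cosmetic remarks only: the right-hand side should indeed be read as $\|f\|_{L^2}^2$ (as you note), and after the tangent-plane rescaling the dominating quantity is a $(d-1)$-dimensional integral controlled by the pointwise rapid decay of $\widehat{\phi}$ rather than literally $\|\widehat{\phi}\|_{L^1(\bR^d)}$; neither affects the argument.
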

\begin{proof}[\textbf{Proof of Lemma \ref{L:Local smooth}}]
	We will mimic the proofs in \cite[Lemma 4.4]{FLS2016} and \cite[Lemma 4.3]{FS2018}, albeit with a small twist.
	
	Let $\mathscr{F}_t$ be the Fourier transform on time space. Then there holds
	\[\mathscr{F}_te^{ia_0t}(\lambda)=2\pi \delta(\lambda-a_0).\]
	Here $\delta$ denotes the Dirac delta function. Thus, Plancherel theorem implies the following estimate
	\[\int_{\bR^{d+1}} \phi(x) \l|[D^{\frac{\alpha-2}{2}}] [e^{it|\xi|^{\alpha}}]f(x)\r|^2 \ddd x\ddd t \sim \int_{\bR^d} \int_{\bR^d} \widehat{\phi}(\xi'-\xi) |\xi|^{\frac{\alpha-1}{2}} |\xi'|^{\frac{\alpha-1}{2}} \widehat{f}(\xi)\overline{\widehat{f}(\xi')} \delta(|\xi|^{\alpha}-|\xi'|^{\alpha}) \ddd\xi \ddd \xi'.\]
	By Schur test, we only need to show
	\begin{equation*}
		\sup_{\xi} \int_{\bR^d} \widehat{\phi}(\xi'-\xi) |\xi|^{\frac{\alpha-1}{2}} |\xi'|^{\frac{\alpha-1}{2}} \delta(|\xi|^{\alpha}-|\xi'|^{\alpha}) \ddd\xi' <\infty.
	\end{equation*}
	Setting polar coordinate $\xi'=k\theta$ with $\theta\in \bS^{d-1}$ and denoting $\xi= |\xi|\theta_{\xi}$, it remains to deduce
	\begin{equation*}
		\sup_{\xi} |\xi|^{d-1} \int_{\bS^{d-1}} \l|\widehat{\phi} \Big(|\xi|(\theta_{\xi}-\theta)\Big)\r| \ddd \theta<\infty.
	\end{equation*}
	The dominated convergence theorem implies that $\int_{\bS^{d-1}} \l|\widehat{\phi} \Big(|\xi|(\theta_{\xi}-\theta)\Big)\r| \ddd \theta$ is a continuous function with respect to $\xi$, hence it suffices to prove
	\begin{equation}\label{E:Local smooth-1}
		\int_{\bS^{d-1}} \l|\widehat{\phi} \Big(|\xi|(\theta_{\xi}-\theta)\Big)\r| \ddd \theta=O\l(|\xi|^{1-d}\r)
	\end{equation}
	as $|\xi|\to \infty$. A changing of variables gives
	\[\int_{\bS^{d-1}} \l|\widehat{\phi} \Big(|\xi|(\theta_{\xi}-\theta)\Big)\r| \ddd \theta= \frac{1}{|\xi|^{d-1}} \int_{\bS^{d-1}_{\xi}} |\widehat{\phi}(\theta)| \ddd\theta,\]
	where $\bS^{d-1}_{\xi}:=\l\{\theta\in\bR^d: |\theta-\theta_{\xi}|=|\xi|\r\}$ is the sphere centered at $\theta_{\xi}$ with radius $|\xi|$. Then the fact that $\phi$ is a Schwartz function deduces the desired result \eqref{E:Local smooth-1} and finishes the proof.
\end{proof}

\begin{lemma}\label{L:Local convergence}
	Let $\beta<\frac{\alpha-1}{2}$ and define the operator $[\bar{E}_{\beta}]:=[D^{\beta}][e^{it|\nabla|^{\alpha}}]$. For a bounded sequence of functions $(f_n)$ in $L^2(\bR^d)$, if we have
	\[f_n\rightharpoonup 0\]
	weakly in $L^2(\bR^d)$, then up to subsequences there holds $[\bar{E}_{\beta}]f_n(t,x)\to 0$ almost everywhere in $\bR^{d+1}$.
\end{lemma}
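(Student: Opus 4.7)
The plan is to show that $[\bar{E}_{\beta}]f_n \to 0$ in $L^2_{\mathrm{loc}}(\bR^{d+1})$; a standard diagonal extraction over an exhaustion of $\bR^{d+1}$ by compacts then produces the a.e.~convergent subsequence. The core idea is a frequency splitting combined with Lemma \ref{L:Local smooth}: write $\widehat{f}_n = \widehat{f}_n^L + \widehat{f}_n^H$, where $\widehat{f}_n^L := \mathds{1}_{\{|\xi|\leq R\}}\widehat{f}_n$ and $\widehat{f}_n^H := \mathds{1}_{\{|\xi|>R\}}\widehat{f}_n$, and treat the two pieces with different tools, choosing $R$ before sending $n\to\infty$.

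For the high-frequency piece, I would exploit the smoothing gap $\beta<\frac{\alpha-1}{2}$. Setting $g_n := [D^{\beta-(\alpha-1)/2}]f_n^H$, Plancherel gives $\|g_n\|_{L^2}\leq R^{\beta-(\alpha-1)/2}\|f_n\|_{L^2}$, which is small for large $R$. Since $[\bar{E}_{\beta}]f_n^H = [D^{(\alpha-1)/2}][e^{it|\nabla|^{\alpha}}]g_n$, applying Lemma \ref{L:Local smooth} to $g_n$ yields
\[
\int_{\bR^{d+1}} \phi(x)\,\l|[\bar{E}_{\beta}]f_n^H(t,x)\r|^2\, \ddd x\, \ddd t \lesssim_{\phi} R^{2\beta-(\alpha-1)} \|f_n\|_{L^2}^2
\]
for any Schwartz $\phi$. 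Given a compact $K\subset \bR^{d+1}$, I would pick $\phi$ with $\phi\geq 1$ on the spatial projection of $K$ to conclude $\|[\bar{E}_{\beta}]f_n^H\|_{L^2(K)} \lesssim_K R^{\beta-(\alpha-1)/2}\|f_n\|_{L^2}$, uniformly in $n$.

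For the low-frequency piece, the key observation is that for fixed $R$ and $(t,x)$, the function $\xi \mapsto |\xi|^{\beta}\mathds{1}_{\{|\xi|\leq R\}}e^{-ix\xi - it|\xi|^{\alpha}}$ belongs to $L^2(\bR^d)$ (using that $\beta$ is in a range where $|\xi|^{2\beta}$ is locally integrable near the origin, which holds in all applications of interest). Since $f_n \rightharpoonup 0$ in $L^2$ forces $\widehat{f}_n \rightharpoonup 0$ in $L^2$, this gives pointwise convergence $[\bar{E}_{\beta}]f_n^L(t,x) \to 0$. A Cauchy--Schwarz bound provides a uniform pointwise majorant depending only on $R$ and $\sup_n\|f_n\|_{L^2}$, so dominated convergence on the finite-measure set $K$ yields $\|[\bar{E}_{\beta}]f_n^L\|_{L^2(K)} \to 0$ for each fixed $R$.

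Putting the two estimates together, for any $\varepsilon>0$ I would first choose $R$ so large that the high-frequency bound drops below $\varepsilon$, uniformly in $n$, and then let $n\to\infty$ to make the low-frequency contribution below $\varepsilon$; this forces $\limsup_n \|[\bar{E}_{\beta}]f_n\|_{L^2(K)} \lesssim \varepsilon$, hence convergence to $0$ in $L^2(K)$ for every compact $K$. No serious obstacle is anticipated; the only delicate point is the order of quantifiers (choose $R$ first, then $n$), and verifying that the implicit constant in the high-frequency bound is truly independent of $n$, both of which are handled by the splitting above.
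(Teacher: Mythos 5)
Your proposal is correct and follows essentially the same route as the paper: a frequency splitting, with the high-frequency piece controlled via the local smoothing Lemma \ref{L:Local smooth} (exploiting the gap $\beta<\frac{\alpha-1}{2}$, uniformly in $n$) and the low-frequency piece handled by weak convergence plus a Cauchy--Schwarz majorant and dominated convergence on compact sets, followed by a diagonal extraction. The only caveat you flag, local integrability of $|\xi|^{2\beta}$ near the origin, is implicitly assumed in the paper's argument as well (it holds in all its applications, where $\beta\geq 0$), so it does not constitute a genuine difference.
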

\begin{proof}[\textbf{Proof of Lemma \ref{L:Local convergence}}]
	Based on the local smoothing Lemma \ref{L:Local smooth}, the proof is standard. Similar proofs can also be found in \cite[Proposition 4.3]{FLS2016}, \cite[Lemma 4.3]{FS2018} and \cite[Lemma D.1]{FS2018}.
	
	We aim to show that $[\bar{E}_{\beta}] f_n\to 0$ strongly in $L^2_{loc}(\bR^{d+1})$, which implies the desired result by a Cantor diagonal argument. Let $K\subset \bR^{d+1}$ be a compact set and $\varepsilon>0$. For $\Lambda>0$, define
	\[[P_{\Lambda}] f(x):=\mathscr{F}^{-1} \mathds{1}_{B_{\Lambda}} \mathscr{F}f(x), \quad [P_{\Lambda}^{\bot}]f(x):=f(x)-[P_{\Lambda}] f(x).\]
	Here $B_{\Lambda}$ is the ball in $\bR^d$ centered at origin with radius $\Lambda$. For the $[P_{\Lambda}^{\bot}]$ term, since $\beta<\frac{\alpha-1}{2}$, we use Lemma \ref{L:Local smooth} to conclude that
	\begin{align*}
		\l\|\mathds{1}_{K} [P_{\Lambda}^{\bot}] [\bar{E}_{\beta}] f_n\r\|_{L_{t,x}^2} &\leq \l\|\mathds{1}_{K} e^{|x|^2}\r\|_{L_{t,x}^{\infty}} \l\|e^{-|x|^2} [D^{\frac{\alpha-1}{2}}] [e^{it|\nabla|^{\alpha}}]\r\|_{L_x^2\to L_{t,x}^2} \\
		&\quad \times\l\|[P_{\Lambda}^{\bot}] [D^{\beta-\frac{\alpha-1}{2}}]\r\|_{L_x^2\to L_x^2} \|f_n\|_{L_x^2} \\
		&\leq C_K \Lambda^{\beta-\frac{\alpha-1}{2}},
	\end{align*}
	where $C_K$ is independent of $n$ and the notation $\|[T]\|_{L^p\to L^q}$ denotes the norm of the operator $[T]$. Hence we can choose $\Lambda$ large enough independent of $n$ such that
	\[\l\|\mathds{1}_{K} [P_{\Lambda}^{\bot}] [\bar{E}_{\beta}] f_n\r\|_{L_{t,x}^2} \leq \varepsilon.\]
	Then for this large $\Lambda$ and the $[P_{\Lambda}]$ term, we claim that for every fixed $t\in\bR$ there holds
	\begin{equation}\label{E:Local convergence-1}
		\mathds{1}_{K}(t,\cdot) [P_{\Lambda}] [D^{\beta}] [e^{it|\nabla|^{\alpha}}] f_n \to 0
	\end{equation}
	strongly in $L_x^2(\bR^d)$ as $n\to\infty$. Indeed, for any $(t,x)\in \bR\times\bR^d$, since $f_n\rightharpoonup 0$ in $L_x^2(\bR^d)$ we have
	\[\mathds{1}_{K}(t,\cdot) [P_{\Lambda}] [D^{\beta}] [e^{it|\nabla|^{\alpha}}] f_n (x)= \mathds{1}_{K}(t,x) \int_{\bR^d} \mathds{1}_{\{|\xi|\leq \Lambda\}} |\xi|^{\beta} e^{ix\xi+it|\xi|^{\alpha}} \widehat{f}_n(\xi) \ddd\xi \to 0\]
	as $n\to \infty$. Meanwhile, by Cauchy-Schwarz and the boundedness of $\|f_n\|_{L_x^2}$, there holds
	\[\Big|\mathds{1}_{K}(t,\cdot) [P_{\Lambda}] [D^{\beta}] [e^{it|\nabla|^{\alpha}}] f_n(x)\Big| \lesssim \mathds{1}_K(t,x) \Lambda^{\beta+d/2} \|f_n\|_{L_x^2}\lesssim_{\Lambda} \mathds{1}_{K}(t,x).\]
	Therefore, dominated convergence theorem gives the desired claim \eqref{E:Local convergence-1}. Notice that there holds
	\[\l\|\mathds{1}_{K}(t,\cdot) [P_{\Lambda}] [D^{\beta}] [e^{it|\nabla|^{\alpha}}] f_n(x)\r\|_{L_{t,x}^2} \lesssim_{\Lambda} \|\mathds{1}_{K}\|_{L_{t,x}^2}.\]
	Hence, the claim \eqref{E:Local convergence-1} and dominated convergence theorem imply that $\mathds{1}_{K} [P_{\Lambda}] [\bar{E}_{\beta}] f_n \to 0$ strongly in $L_{t,x}^2(\bR^{d+1})$ as $n\to\infty$. This completes the proof.
\end{proof}

\section{Approximate symmetry}\label{S:Approximate symmetry}
As mentioned in the introduction, to establish the main precompactness result Theorem \ref{T:Precompactness}, one should understand the behavior of \textit{approximate symmetries} in the sense of \cite[Remark 2.6]{FS2018}. In other words, we should investigate the effect of frequency parameters $\xi_n$. In this section, for frequencies $(\xi_n)\subset\bR^d$ with $|\xi_n|\to \infty$, then up to subsequences we introduce the following notations
\[\bar{\xi}_n:=\frac{\xi_n}{|\xi_n|}, \quad \xi_0:=\lim_{n\to\infty} \bar{\xi}_n.\]
For an unit vector $\xi_0\in \bS^{d-1}$, we define the linear transformation $A_0$ on $\bR^d$ as follows
\[A_0: \xi\mapsto \xi^{\bot}\sqrt{\alpha/2}+ \xi^{\shortparallel}\sqrt{\alpha(\alpha-1)/2}, \quad \xi^{\shortparallel}:=(\xi\xi_0) \xi_0, \quad \xi^{\bot}:=\xi-\xi^{\shortparallel};\]
and the associated unitary operator $[\tilde{A}_0]$ on $L^2(\bR^d)$ is defined by
\begin{equation}\label{E:Approximate unitary operator}
	[\tilde{A}_0]f(x):=|\det A_0|^{1/2} f(A_0x).
\end{equation}
Note that the absolute value of determinate $|\det A_0|=(\alpha/2)^{d/2} \sqrt{\alpha-1}$. The approximate symmetry mainly investigates the operator
\begin{equation}\label{E:Approximate operator}
	[T_{\alpha}^n]f(t,x):=\int_{\bR^d} |\xi+\xi_n|^{\frac{\alpha-2}{q_0}} e^{ix\xi +it|\xi+\xi_n|^{\alpha}} \widehat{f}(\xi) \ddd \xi
\end{equation}
and the following function
\begin{equation}\label{E:Approximate function}
	F_{\alpha}^n(t,x)=[\bar{T}_{\alpha}^n]f(t,x):=\int_{\bR^d} \l|\frac{\xi}{|\xi_n|}+\bar{\xi}_n\r|^{\frac{\alpha-2}{q_0}} e^{ix\xi+it\Phi_n(\xi)} \widehat{f}(\xi) \ddd\xi,
\end{equation}
where
\[\Phi_n(\xi):=\frac{1}{|\xi_n|^{\alpha-2}} \l[|\xi+\xi_n|^{\alpha}-|\xi_n|^{\alpha}- \alpha|\xi_n|^{\alpha-2}\xi_n\xi\r].\]
Indeed, it is not hard to see that there holds
\[\|[T_{\alpha}^n]f\|_{L_{t,x}^{q_0}}=\l\|F_{\alpha}^n\r\|_{L_{t,x}^{q_0}}.\]
Our first result on the aforementioned approximate operator $[T_{\alpha}^n]$ is the following asymptotic Schr\"odinger behavior lemma.

\begin{lemma}\label{L:Asymptotic schrodinger}
	Let $f\in L^2(\bR^d)$ with $\|f\|_{L^2}=1$ and frequencies $(\xi_n)\subset\bR^d$. Set
	\[\widehat{f}_{n}(\xi):=\widehat{f}(\xi-\xi_n), \quad \mathbf{a}_{d,\alpha}^{*}:=(\alpha-1)^{\frac{-1}{2d+4}} (\alpha/2)^{\frac{-d}{2d+4}}.\]
	Then, up to subsequences, we have the following asymptotic Schr\"odinger behavior
	\[\lim_{|\xi_n|\to\infty} \l\|[E_{\alpha}] f_{n}\r\|_{L_{t,x}^{q_0}}=\lim_{n\to\infty} \l\|[T_{\alpha}^n] f\r\|_{L_{t,x}^{q_0}}=\mathbf{a}_{d,\alpha}^{*} \l\|[e^{it\Delta}][\tilde{A}_0]f\r\|_{L_{t,x}^{q_0}}.\]
\end{lemma}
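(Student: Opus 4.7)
The approach is to reduce the problem to a second-order Taylor expansion of the phase and then to a density argument for passing to the $L^{q_0}$-norm limit. The first identity $\|[E_\alpha]f_n\|_{L^{q_0}_{t,x}} = \|[T_\alpha^n]f\|_{L^{q_0}_{t,x}}$ is immediate: changing variable $\eta = \xi - \xi_n$ in the Fourier integral defining $[E_\alpha]f_n$ factors out a modulus-one phase $e^{ix\xi_n}$. The identity $\|[T_\alpha^n]f\|_{L^{q_0}_{t,x}} = \|F_\alpha^n\|_{L^{q_0}_{t,x}}$ noted after \eqref{E:Approximate function} follows from the algebraic factorization
\[|\xi+\xi_n|^\alpha = |\xi_n|^\alpha + \alpha|\xi_n|^{\alpha-2}\xi_n\cdot\xi + |\xi_n|^{\alpha-2}\Phi_n(\xi),\]
which pulls $e^{it|\xi_n|^\alpha}e^{i\alpha t|\xi_n|^{\alpha-2}\xi_n\cdot\xi}$ out of the integrand, combined with the affine change $s = t|\xi_n|^{\alpha-2}$, $y = x + \alpha t|\xi_n|^{\alpha-2}\xi_n$ in space-time; its Jacobian $|\xi_n|^{\alpha-2}$ absorbs the $|\xi_n|^{(\alpha-2)/q_0}$ pulled out of $|\xi+\xi_n|^{(\alpha-2)/q_0}$. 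So everything reduces to showing $\lim_n \|F_\alpha^n\|_{L^{q_0}_{t,x}} = \mathbf{a}_{d,\alpha}^{*}\|[e^{it\Delta}][\tilde A_0]f\|_{L^{q_0}_{t,x}}$.

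The limit is identified by Taylor expansion. Since the Hessian of $|\cdot|^\alpha$ at $\xi_n$ is $\alpha|\xi_n|^{\alpha-2}\bigl(I + (\alpha-2)\bar\xi_n\otimes\bar\xi_n\bigr)$, the multivariate Taylor formula gives
\[\Phi_n(\xi) = \tfrac{\alpha}{2}\bigl|\xi - (\xi\cdot\bar\xi_n)\bar\xi_n\bigr|^2 + \tfrac{\alpha(\alpha-1)}{2}(\xi\cdot\bar\xi_n)^2 + O\!\bigl(|\xi|^3/|\xi_n|\bigr).\]
Passing $\bar\xi_n \to \xi_0$ along a subsequence and invoking the definition of $A_0$ identifies the limit phase as $|A_0\xi|^2$, while the amplitude $|\xi/|\xi_n|+\bar\xi_n|^{(\alpha-2)/q_0}\to 1$; both convergences are uniform on any compact $\xi$-set. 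This produces the pointwise candidate
\[F_\alpha(t,x) := \int_{\bR^d} e^{ix\xi+it|A_0\xi|^2}\widehat{f}(\xi)\,\ddd\xi.\]
Substituting $\eta = A_0\xi$ (noting $A_0$ is symmetric and invertible) and using the definition \eqref{E:Approximate unitary operator} of $[\tilde A_0]$, $F_\alpha(t,x)$ becomes a universal-constant multiple of $[e^{it|\nabla|^2}][\tilde A_0]f(A_0^{-1}x)$. Computing the $L^{q_0}_{t,x}$ norm yields a Jacobian factor $|\det A_0|^{1/q_0-1/2}$, which equals $\mathbf{a}_{d,\alpha}^{*}$ by $|\det A_0| = (\alpha/2)^{d/2}\sqrt{\alpha-1}$ and $1/q_0 - 1/2 = -1/(d+2)$.

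To convert pointwise convergence of the integrands into convergence of $\|F_\alpha^n\|_{L^{q_0}}$, I would reduce to the dense subclass of Schwartz functions $f$ with $\supp\widehat f \subset B_R$, using the uniform Strichartz bound $\|F_\alpha^n\|_{L^{q_0}_{t,x}} \lesssim \|f\|_{L^2}$ and a standard $\varepsilon/3$ argument. On this subclass the phase and amplitude of $F_\alpha^n$ converge to those of $F_\alpha$ uniformly on $B_R$ once $|\xi_n|\gg R$, hence $F_\alpha^n \to F_\alpha$ uniformly on compact subsets of $\bR^{d+1}$. Uniform tail control in $(t,x)$ follows from repeated integration by parts combined with stationary phase applied to the oscillatory integral defining $F_\alpha^n$: the $\xi$-Hessian of the total phase $t\Phi_n(\xi)+x\cdot\xi$ is comparable to $t$ on $B_R$ once $|\xi_n|$ is large (by the non-degeneracy of the Hessian of $|A_0\cdot|^2$ on bounded sets), which yields Schwartz-type decay of $F_\alpha^n(t,x)$ off the stationary locus and the standard $|t|^{-d/2}$ decay on that locus, both with constants independent of $n$. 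The resulting $L^{q_0}_{t,x}$-integrable majorant allows dominated convergence to promote the pointwise limit to $F_\alpha^n \to F_\alpha$ in $L^{q_0}_{t,x}$, and density extends the norm convergence to arbitrary $f \in L^2$. The main technical obstacle is precisely making this stationary-phase decay truly uniform in $n$; the remaining steps are essentially formal once the uniformity is in hand.
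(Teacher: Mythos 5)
Your proposal is correct and follows essentially the same route as the paper: the same change of variables reducing everything to $F_\alpha^n$, the same Taylor-expansion identification of the limit phase $|A_0\xi|^2$ with the Jacobian factor $|\det A_0|^{1/q_0-1/2}=\mathbf{a}_{d,\alpha}^{*}$, and the same density-plus-uniform-decay-plus-dominated-convergence scheme. The uniformity in $n$ of the stationary-phase constants that you flag as the main remaining obstacle is resolved in the paper exactly along the lines you indicate, by splitting into a region where the Hessian determinant of the normalized phase is bounded below (treated with the uniform stationary phase estimate of Alazard--Burq--Zuily) and a complementary region where the phase gradient is bounded below (treated by non-stationary integration by parts).
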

\begin{remark}\label{R:Asymptotic schrodinger}
	Based on the existence of extremals for $\mathbf{S}_d^{*}$ by \cite{Shao2009EJDE}, this Lemma \ref{L:Asymptotic schrodinger} immediately gives
	\[\mathbf{M}_{d,\alpha}\geq (\alpha-1)^{\frac{-1}{2d+4}} (\alpha/2)^{\frac{-d}{2d+4}} \mathbf{S}_d^{*}.\]
	For the case $d=1$, after some accurate numerical calculations, it has been shown in \cite[Theorem 1.4]{BOQ2020} that there exists $\alpha_1\approx 5.485$ such that the strict inequality \eqref{T:Precompactness-1} holds for indices $\alpha\in (1,\alpha_1)\setminus \{2\}$. For the case $d=2$, it has been proved in \cite[Proposition 6.9]{OQ2018} that there exists $\alpha_0>5$ such that the strict inequality \eqref{T:Precompactness-1} holds for $\alpha\in(2,\alpha_0)$. These consequences suggest that the aforementioned strict inequality might hold for all general $(d,\alpha)$ with $\alpha>2$. However, it remains an open question for us.
\end{remark}

\begin{proof}[\textbf{Proof of Lemma \ref{L:Asymptotic schrodinger}}]
	The change of variables can yield that
	\begin{equation}\label{E:Asmptotic schrodinger-0.5}
		\l\|[E_{\alpha}] f_{n}\r\|_{L_{t,x}^{q_0}}= \l\|[T_{\alpha}^n] f\r\|_{L_{t,x}^{q_0}}, \quad \|[\bar{T}_{\alpha}^n]\|_{L_x^2\to L_{t,x}^{q_0}}=\|[T_{\alpha}^n]\|_{L_x^2\to L_{t,x}^{q_0}} =\l\|[E_{\alpha}]\r\|_{L_x^2\to L_{t,x}^{q_0}}= \mathbf{M}_{d,\alpha}.
	\end{equation}
	Hence by some dense argument we may assume $f$ has compact Fourier support at the beginning. Using our aforementioned definition \eqref{E:Approximate function}, we can obtain that
	\begin{align*}
		\|[T_{\alpha}^n]f\|_{L_{t,x}^{q_0}} &=\l\|\int_{\bR^d} |\xi_n|^{\frac{\alpha-2}{q_0}} \l|\frac{\xi}{|\xi_n|}+\bar{\xi}_n\r|^{\frac{\alpha-2}{q_0}} e^{ix\xi+it|\xi+\xi_n|^{\alpha}} \widehat{f}(\xi) \ddd\xi\r\|_{L_{t,x}^{q_0}} \\
		&= \l\|\int_{\bR^d} \l|\frac{\xi}{|\xi_n|}+\bar{\xi}_n\r|^{\frac{\alpha-2}{q_0}} e^{ix\xi+it\frac{|\xi+\xi_n|^{\alpha}}{|\xi_n|^{\alpha-2}}} \widehat{f}(\xi) \ddd\xi\r\|_{L_{t,x}^{q_0}} \\&= \l\|F_{\alpha}^n\r\|_{L_{t,x}^{q_0}}.
	\end{align*}
	Then the Taylor's theorem, paired with the assumptions $|\xi_n|\to \infty$ and $f$ has compact Fourier support, will directly imply the following pointwise convergence
	\begin{equation}\label{E:Asmptotic schrodinger-0.8}
		\lim_{n\to\infty} \Phi_n(\xi)=\frac{\alpha |\xi|^2 +\alpha(\alpha-2)|\xi\xi_0|^2}{2}.
	\end{equation}
	Therefore it is reasonable to expect the following estimate
	\begin{equation} \label{E:Asymptotic schrodinger-1}
		\lim_{n\to\infty} \Big\|[T_{\alpha}^n]f\Big\|_{L_{t,x}^{q_0}} = \l\|\int_{\bR^d} e^{ix\xi+it\l[\alpha |\xi|^2/2 +\alpha(\alpha-2)|\xi\xi_0|^2/2\r]} \widehat{f}(\xi) \ddd \xi\r\|_{L_{t,x}^{q_0}}.
	\end{equation}
	Let us postpone the proof of this result and go ahead by using this estimate \eqref{E:Asymptotic schrodinger-1}. From the definition of $A_0$ and $[\tilde{A}_0]$, we know that $|A_0\xi|^2=\alpha |\xi|^2/2 +\alpha(\alpha-2)|\xi\xi_0|^2/2$. Then a direct computation yields that
	\begin{align*}
		\l\|\int_{\bR^d} e^{ix\xi+it\l[\alpha |\xi|^2/2 +\alpha(\alpha-2)|\xi\xi_0|^2/2\r]} \widehat{f}(\xi) \ddd \xi\r\|_{L_{t,x}^{q_0}} &= \l\|\int_{\bR^d} e^{ix\xi+it|A_0\xi|^2} \widehat{f}(\xi) \ddd \xi\r\|_{L_{t,x}^{q_0}} \\
		&= |\det A_0|^{-1/2} \l\|[e^{it\Delta}][\tilde{A}_0]f (A_0^{-1}x)\r\|_{L_{t,x}^{q_0}} \\
		&=|\det A_0|^{1/{q_0}-1/2} \l\|[e^{it\Delta}][\tilde{A}_0]f\r\|_{L_{t,x}^{q_0}}.
	\end{align*}
	Finally, the fact $|\det A_0|=(\alpha/2)^{d/2} \sqrt{\alpha-1}$ gives the constant $\mathbf{a}_{d,\alpha}^{*}=(\alpha-1)^{\frac{-1}{2d+4}} (\alpha/2)^{\frac{-d}{2d+4}}$.
	
	Now it remains to prove \eqref{E:Asymptotic schrodinger-1}. Indeed, we can obtain the following stronger result
	\begin{equation} \label{E:Asymptotic schrodinger-2}
		F_{\alpha}^n(t,x) \to \int_{\bR^d} e^{ix\xi+it\l[\alpha |\xi|^2/2 +\alpha(\alpha-2)|\xi\xi_0|^2/2\r]} \widehat{f}(\xi) \ddd \xi
	\end{equation}
	strongly in $L_{t,x}^{q_0}(\bR^{d+1})$ as $n\to\infty$. Firstly for any $(t,x)\in\bR^{d+1}$, since $f$ is a Schwartz function with compact Fourier support, dominated convergence theorem and the estimate \eqref{E:Asmptotic schrodinger-0.8} imply
	\begin{equation}\label{E:Asymptotic schrodinger-3}
		\lim_{n\to\infty} F_{\alpha}^n(t,x)= \int_{\bR^d} e^{ix\xi+it\l[\alpha |\xi|^2/2 +\alpha(\alpha-2)|\xi\xi_0|^2/2\r]} \widehat{f}(\xi) \ddd \xi.
	\end{equation}
	On the other hand, for $n$ large enough, by a standard application of stationary phase method we can deduce the following two decay estimates
	\begin{equation}\label{E:Asymptotic schrodinger-4}
		|F_{\alpha}^n(t,x)| \leq C (1+t^2+|x|^2)^{-\frac{d}{4}}
	\end{equation}
	and
	\begin{equation}\label{E:Asymptotic schrodinger-5}
		|H_{\alpha}(t,x)|:=\l|\int_{\bR^d} e^{ix\xi+it\l[\alpha |\xi|^2/2 +\alpha(\alpha-2)|\xi\xi_0|^2/2\r]} \widehat{f}(\xi) \ddd \xi\r| \leq C (1+t^2+|x|^2)^{-\frac{d}{4}},
	\end{equation}
	where the constant $C$ is independent of $n$. Assume for the moment these two decay estimates and let us show the desired convergence result \eqref{E:Asymptotic schrodinger-2} first. From the decay estimates \eqref{E:Asymptotic schrodinger-4} and \eqref{E:Asymptotic schrodinger-5}, after taking a subsequence, we have that for all $n$ and some constant $C'$ there holds
	\[\int_{|(t,x)|>R}\l(|F_n(t,x)|^{q_0}+|H_{\alpha}(t,x)|^{q_0}\r) \ddd t\ddd x \leq \frac{C'}{R}.\]
	By setting $R>0$ large enough, this term can be arbitrary small uniformly in $n$. Hence it suffices to prove that for any fixed $R>0$, there holds
	\[\mathds{1}_{B_R}(t,x)F_{\alpha}^n(t,x) \to \mathds{1}_{B_R}(t,x) H_{\alpha}(t,x)\]
	strongly in $L_{t,x}^{q_0}(\bR^{d+1})$ as $n\to\infty$. By dominated convergence theorem, this follows immediately from the pointwise convergence \eqref{E:Asymptotic schrodinger-3} and the following uniform bound (for $n$ large enough)
	\[|F_{\alpha}^n(t,x)| \lesssim \|\widehat{f}\|_{L^1}<\infty.\]
	Thus, it remains to prove the decay estimates \eqref{E:Asymptotic schrodinger-4} and \eqref{E:Asymptotic schrodinger-5} by means of stationary phase method. In view of the Fourier extension on the surface
	\[\bar{S}:=\l(\xi, \alpha |\xi|^2/2 +\alpha(\alpha-2)|\xi\xi_0|^2/2\r),\]
	then the fact that $\bar{S}$ has non-zero Gaussian curvature and \cite[p. 348, Theorem 1]{Stein1993} directly give the result \eqref{E:Asymptotic schrodinger-5}. Indeed, a direct computation shows that the determinant of corresponding Hessian matrix is a constant $\alpha^d(2\alpha-2)$. To deduce \eqref{E:Asymptotic schrodinger-4}, since $|\xi_n|\to\infty$, we point out that for $n$ large enough there holds
	\[|F_{\alpha}^n(t,x)|\leq 3 \l|\int_{\bR^d} e^{ix\xi+it\Phi_n(\xi)} \widehat{f}(\xi) \ddd\xi\r|.\]
	Let $\bar{\Phi}_n(\bar{x},\bar{t},\xi):=\bar{x}\xi+\bar{t}\Phi_n(\xi)$ with $(\bar{x},\bar{t}):=(x,t)/|(x,t)|$. We investigate the gradient
	\[\nabla \bar{\Phi}_n(\bar{x},\bar{t},\xi)=\bar{x}+\frac{\bar{t}}{|\xi_n|^{\alpha-2}}\l[\alpha|\xi+\xi_n|^{\alpha-2}(\xi+\xi_n)-\alpha|\xi_n|^{\alpha-2}\xi_n\r]\]
	and the Hessian
	\[\mathrm{Hess}\bar{\Phi}_n(\bar{x},\bar{t},\xi)=\frac{\bar{t}}{|\xi_n|^{\alpha-2}}\l(\alpha|\xi+\xi_n|^{\alpha-2}E_d +\alpha(\alpha-2)|\xi+\xi_n|^{\alpha-4} L_n^T L_n\r),\]
	where $E_d$ is the identity matrix, $L_n:=(\xi+\xi_n)$ is viewed as row vector and $L_n^T$ is the transpose of $L_n$. Then we can compute the absolute value for the determinant of this Hessian
	\[\l|\det \mathrm{Hess} \bar{\Phi}_n(\bar{x},\bar{t},\xi)\r|=\frac{(\alpha \bar{t})^d(\alpha-1)|\xi+\xi_n|^{d(\alpha-2)}}{|\xi_n|^{d(\alpha-2)}}.\]
	Since $|\xi_n|\to\infty$ and the $\xi$-localization, we have the following estimates
	\[\lim_{n\to\infty}|\nabla\bar{\Phi}_n|= |\bar{x}+\alpha(\alpha-1)\bar{t}\xi|, \quad \lim_{n\to\infty}\l|\det \mathrm{Hess}\bar{\Phi}_n\r|= (\alpha \bar{t})^d(\alpha-1).\]
	Thus, for $n$ large enough, the proof is divided into two cases. Firstly if $\bar{t}\geq\varepsilon$ for some small $\varepsilon$ to be determined later, then we always have $\l|\det \mathrm{Hess}\bar{\Phi}_n\r| \gtrsim \varepsilon^d$. Hence the uniform stationary phase estimates of Alazard, Burq and Zuily \cite[Theorem 1]{ABZ2017} will give the desired estimate \eqref{E:Asymptotic schrodinger-4}. Secondly if $\bar{t}<\varepsilon$ which means $|\bar{x}|>\sqrt{1-\varepsilon^2}$, duo to the $\xi$-localization, we can choose $\varepsilon$ small enough such that $|\bar{x}|\geq \alpha(\alpha-1)\bar{t}|\xi|+1/8$ uniformly in $\xi$. Then we always have the following uniform estimate
	\[\l|\nabla\bar{\Phi}_n\r|\geq |\bar{x}|-\alpha(\alpha-1)\bar{t}|\xi|\geq 1/8>0.\]
	Therefore, the classical integration by parts arguments \cite[p. 341, Proposition 4]{Stein1993} will deduce an even faster decay than the desired conclusion \eqref{E:Asymptotic schrodinger-4} and complete the proof.
\end{proof}

As we have done in Section \ref{S:Local smoothing and local convergence}, to use the Br\'ezis-Lieb type lemma, analogously the corresponding local smoothing and local convergence results are required to be established. At this point due to the parameters $\xi_n$, we need to do some extra estimates by using the Taylor's theorem.
\begin{lemma}\label{L:Approximate local smooth}
	Suppose that $\phi$ be a Schwartz function and $|\eta|\geq 100$. There exists $C>0$ such that for arbitrary function $f\in L^2(\bR^d)$ which has Fourier support in $\{\xi: |\xi|\leq |\eta|/5\}$, we have
	\[\int_{\bR^{d+1}} \phi(x) \Big|[\psi_{\eta}(D)] [\bar{T}_{\alpha}^{\eta}]f(t,x)\Big|^2 \ddd x\ddd t\leq C\|f\|_{L_x^2}^2,\]
	where $[\bar{T}_{\alpha}^{\eta}]$ is defined as in \eqref{E:Approximate function} with parameter $\eta$ substituting $\xi_n$ and
	\[\psi_{\eta}(\xi):=|\eta|^{\frac{\alpha-2}{q_0}}|\xi|^{1/2}\l|\xi+\eta\r|^{-\frac{\alpha-2}{q_0}}.\]
\end{lemma}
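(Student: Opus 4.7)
\emph{Proof plan.} The key algebraic observation is that the composition $[\psi_\eta(D)][\bar T_\alpha^\eta]$ telescopes: the weight $|\eta|^{(\alpha-2)/q_0}|\xi|^{1/2}|\xi+\eta|^{-(\alpha-2)/q_0}$ from $\psi_\eta(\xi)$ cancels the weight $(|\xi+\eta|/|\eta|)^{(\alpha-2)/q_0}$ from $[\bar T_\alpha^\eta]$, leaving
\[
[\psi_\eta(D)][\bar T_\alpha^\eta]f(t,x)=(2\pi)^{-d}\int_{\bR^d}|\xi|^{1/2}e^{ix\xi+it\Phi_\eta(\xi)}\widehat f(\xi)\,\ddd\xi.
\]
Thus the lemma becomes a local-smoothing estimate for the ``generalized Schr\"odinger'' evolution with phase $\Phi_\eta$, and I would follow the Plancherel-plus-Schur strategy of Lemma~\ref{L:Local smooth}. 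Expanding $|\cdot|^2$, integrating against $\phi(x)\,\ddd x\,\ddd t$, Plancherel in $t$ produces $2\pi\delta(\Phi_\eta(\xi)-\Phi_\eta(\xi'))$ and Fourier duality in $x$ produces $\widehat\phi(\xi'-\xi)$. By Schur's test the problem reduces to the uniform bound
\[
I(\xi):=\int_{\bR^d}|\widehat\phi(\xi'-\xi)||\xi|^{1/2}|\xi'|^{1/2}\delta(\Phi_\eta(\xi)-\Phi_\eta(\xi'))\,\ddd\xi'\le C
\]
for $|\xi|\le|\eta|/5$.

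The analytic core is a Taylor analysis of $\Phi_\eta$ around its unique critical point $\xi=0$. Direct computation gives $\Phi_\eta(0)=0$, $\nabla\Phi_\eta(0)=0$, and Hessian $\alpha E_d+\alpha(\alpha-2)\bar\eta\bar\eta^T$, whose eigenvalues $\alpha$ and $\alpha(\alpha-1)$ depend only on $\bar\eta$ and are therefore uniformly positive definite in $\eta$. The cubic Taylor remainder is $O(|\xi|^3/|\eta|)$, hence controlled by $|\xi|^2/5$ on $|\xi|\le|\eta|/5$. Combining these yields $\Phi_\eta(\xi)\sim|\xi|^2$ and $|\nabla\Phi_\eta(\xi')|\sim|\xi'|$ with $\eta$-independent constants on the relevant range, and the sandwich $c|\xi'|^2\le\Phi_\eta(\xi')=\Phi_\eta(\xi)\le C|\xi|^2$ forces $|\xi'|\sim|\xi|$ throughout the level surface $\Sigma_\xi:=\{\xi':\Phi_\eta(\xi')=\Phi_\eta(\xi)\}$. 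The coarea formula then converts $I(\xi)$ into a surface integral in which the $1/|\nabla\Phi_\eta(\xi')|$ weight cancels against $|\xi|^{1/2}|\xi'|^{1/2}$, leaving
\[
I(\xi)\lesssim\int_{\Sigma_\xi}|\widehat\phi(\xi'-\xi)|\,\ddd\sigma(\xi').
\]

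Finally, I would bound this surface integral uniformly in $\xi$ by a layer-cake argument. Schwartz decay gives $|\widehat\phi(z)|\le C_N(1+|z|)^{-N}$, while the $(d-1)$-dimensional nature of $\Sigma_\xi$ and the fact that it passes through $\xi$ yield $\sigma(\Sigma_\xi\cap B_r(\xi))\lesssim r^{d-1}$, so
\[
\int_{\Sigma_\xi}|\widehat\phi(\xi'-\xi)|\,\ddd\sigma(\xi')\lesssim_N\int_0^\infty r^{d-1}(1+r)^{-N-1}\,\ddd r<\infty
\]
for $N>d$, producing the desired universal constant $C$. The principal obstacle is the degeneracy $\nabla\Phi_\eta(0)=0$: as $\xi\to0$, $\Sigma_\xi$ collapses while $1/|\nabla\Phi_\eta|$ blows up, but the compensating factor $|\xi|^{1/2}|\xi'|^{1/2}$ built into $\psi_\eta$ precisely cancels this via the comparability $|\xi'|\sim|\xi|$ on $\Sigma_\xi$. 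Establishing that comparability with $\eta$-independent constants (from the spectral gap of the Hessian) is the delicate step, and it is also what ensures $C$ can be taken independent of $\eta$.
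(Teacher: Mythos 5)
Your overall strategy is the paper's: after the weight cancellation the problem is exactly the Plancherel--Schur reduction of Lemma \ref{L:Local smooth} applied to the phase $\Phi_\eta$, and the heart of the matter is showing, uniformly in $\eta$, that on the relevant level set $|\nabla\Phi_\eta(\xi')|\sim_\alpha|\xi'|$ and $|\xi'|\sim_\alpha|\xi|$, which cancels the factor $|\xi|^{1/2}|\xi'|^{1/2}$; this is precisely the content of \eqref{E:Approximate local smooth-1} and the Taylor analysis of the level-set equation in the paper. Your sandwich argument $\Phi_\eta(\xi')=\Phi_\eta(\xi)$ with $\Phi_\eta\sim|\cdot|^2$ replaces the paper's contradiction argument for $|\xi'|\sim|\xi|$, and your coarea plus layer-cake treatment of the remaining surface integral is, if anything, more explicit than the paper's terse supremum bound.

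Two steps need repair. First, the justification ``Hessian at $0$ plus cubic remainder $O(|\xi|^3/|\eta|)$, hence controlled by $|\xi|^2/5$'' fails as written: the implied constant in the cubic remainder depends on $\alpha$ (third derivatives of $|\cdot|^\alpha$ on the annulus $|\zeta|\sim|\eta|$), so for $|\xi|$ comparable to $|\eta|/5$ the error need not be dominated by the quadratic form, whose smallest eigenvalue is only $\alpha$. The correct fix is the one you gesture at with the ``spectral gap'': on the whole ball $\{|\xi|\le|\eta|/5\}$ one has $|\xi+\eta|\sim|\eta|$, hence $c_\alpha E_d\le\mathrm{Hess}\,\Phi_\eta(\xi)\le C_\alpha E_d$ uniformly in $\eta$, and second-order Taylor with integral remainder then gives both $\Phi_\eta(\xi)\sim_\alpha|\xi|^2$ and $|\nabla\Phi_\eta(\xi')|\sim_\alpha|\xi'|$ on that ball; do not argue through the third-order expansion at the origin. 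Second, these comparabilities hold only for $|\xi'|\le|\eta|/5$, so you must restrict the Schur kernel (equivalently the level set $\Sigma_\xi$) to the Fourier support of $f$ --- legitimate since $\widehat f$ vanishes outside that ball, but it has to be said, because the full level set of the convex function $\Phi_\eta$ can leave the ball. Relatedly, the bound $\sigma\bigl(\Sigma_\xi\cap B_r(\xi)\bigr)\lesssim r^{d-1}$ needs a reason beyond ``$(d-1)$-dimensionality'': use that $\Phi_\eta$ is convex, so $\Sigma_\xi$ is the boundary of a convex set and the estimate follows from monotonicity of surface area, or use the uniform Hessian bounds to write $\Sigma_\xi$ locally as a Lipschitz graph with controlled constant. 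With these repairs your argument is complete and matches the paper's proof in substance.
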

\begin{proof}[\textbf{Proof of Lemma \ref{L:Approximate local smooth}}]
	By Plancherel theorem, as in the proof of Lemma \ref{L:Local smooth}, we investigate
	\[\int_{\bR^d} \int_{\bR^d} \widehat{\phi}(\xi'-\xi) \psi_{\eta}(\xi)\psi_{\eta}(\xi') |\eta|^{\frac{4-2\alpha}{q_0}} \l|\xi+\eta\r|^{\frac{\alpha-2}{q_0}} \l|\xi'+\eta\r|^{\frac{\alpha-2}{q_0}} \overline{\widehat{f}(\xi')} \widehat{f}(\xi) \delta\l[\Phi_{\eta}(\xi)-\Phi_{\eta}(\xi')\r]\ddd\xi \ddd \xi'.\]
	Then Schur test implies that it suffices to bound the following term
	\[\sup_{\xi} \int_{\bR^d} \widehat{\phi}(\xi'-\xi) |\xi|^{1/2} |\xi'|^{1/2} \delta\l[\Phi_{\eta}(\xi)-\Phi_{\eta}(\xi')\r] \ddd\xi'\]
	independently of $\eta$. For every $\xi\in \bR^d$ and $\eta\in\bR^d$, denote the set
	\[\mathcal{V}_{\xi,\eta}:=\{\xi': |\xi'+\eta|^\alpha-\alpha|\eta|^{\alpha-2}\eta\xi'= |\xi+\eta|^\alpha-\alpha|\eta|^{\alpha-2}\eta\xi\}.\]
	Then a direct computation shows that the aforementioned term can be bounded by
	\begin{equation*}
		\sup_{\xi} \sup_{\xi'\in \mathcal{V}_{\xi,\eta}} \frac{\|\widehat{\phi}\|_{L^{\infty}} |\xi|^{1/2}|\xi'|^{1/2} |\eta|^{\alpha-2}}{\alpha \Big||\xi'+\eta|^{\alpha-2}(\xi'+\eta) -|\eta|^{\alpha-2}\eta\Big|}.
	\end{equation*}
	First, we states the following claim whose proof is postponed
	\begin{equation}\label{E:Approximate local smooth-1}
		\sup_{\xi} \sup_{\xi'\in \mathcal{V}_{\xi,\eta}} \frac{\|\widehat{\phi}\|_{L^{\infty}} |\xi|^{1/2}|\xi'|^{1/2} |\eta|^{\alpha-2}}{\alpha \Big||\xi'+\eta|^{\alpha-2}(\xi'+\eta) -|\eta|^{\alpha-2}\eta\Big|} \sim_{\alpha} \sup_{\xi} \sup_{\xi'\in \mathcal{V}_{\xi,\eta}} \frac{\|\widehat{\phi}\|_{L^{\infty}} |\xi|^{1/2}}{|\xi'|^{1/2}}.
	\end{equation}

	Now we turn to prove $|\xi'|\sim_{\alpha} |\xi|$ for all $\xi'\in \mathcal{V}_{\xi,\eta}$ uniformly in $\eta$. Then this estimate and the claim \eqref{E:Approximate local smooth-1} will immediately give the desired uniform bound $C_{\alpha}\|\widehat{\phi}\|_{L^{\infty}}$ and complete the proof. Indeed, the set $\mathcal{V}_{\xi,\eta}$ can be rewritten as
	\[\mathcal{V}_{\xi,\eta}=\l\{\xi': |\tilde{\xi}'+\bar{\eta}|^\alpha-\alpha|\tilde{\xi}'|\cos\theta(\xi',\eta) = |\tilde{\xi}+\bar{\eta}|^\alpha-\alpha|\tilde{\xi}|\cos\theta(\xi,\eta)\r\},\]
	where
	\[\tilde{\xi}':=\xi'/|\eta|, \quad \bar{\eta}:=\eta/|\eta|, \quad \tilde{\xi}:=\xi/|\eta|.\]
	Note that $|\tilde{\xi}|\leq 1/5$ and $|\tilde{\xi}'|\leq 1/5$. Without loss of generality, we may assume $\bar{\eta}=(1,0,\ldots,0)$. Then for $\xi'\in \mathcal{V}_{\xi,\eta}$ we have
	\begin{equation}\label{E:Approximate local smooth-2}
		(1+2\tilde{\xi}'_1+|\tilde{\xi}'|^2)^{\alpha/2} -\alpha\tilde{\xi}'_1 =(1+2\tilde{\xi}_1+|\tilde{\xi}|^2)^{\alpha/2} -\alpha\tilde{\xi}_1.
	\end{equation}
	Here we have used the notations $\tilde{\xi}=(\tilde{\xi}_1,\tilde{\xi}_2,\ldots,\tilde{\xi}_d)$ and $\tilde{\xi}' =(\tilde{\xi}'_1,\tilde{\xi}'_2,\ldots,\tilde{\xi}'_d)$. Moreover, we can use Taylor's theorem to deduce the estimates
	\[\l(1+2\tilde{\xi}_1+|\tilde{\xi}|^2\r)^{\alpha/2} -\alpha\tilde{\xi}_1 \leq \l(1+2|\tilde{\xi}|+|\tilde{\xi}|^2\r)^{\alpha/2} -\alpha|\tilde{\xi}|=1+\frac{\alpha(\alpha-1)}{2} \l(1+\theta_0 |\tilde{\xi}|\r)^{\alpha-2}|\tilde{\xi}|^2\]
	with $0<\theta_0<1$; and on the other hand there exists $0<\theta_1<1$ such that
	\[\l(1+2\tilde{\xi}'_1+|\tilde{\xi}'|^2\r)^{\alpha/2} -\alpha\tilde{\xi}'_1 =1+ \frac{\alpha}{2} |\tilde{\xi}'|^2+ \frac{\alpha(\alpha-1)}{2}\l[1+\theta_1 (2\tilde{\xi}'_1+|\tilde{\xi}'|^2)\r]^{\alpha-2} \l(|\tilde{\xi}'|+2\tilde{\xi}'_1\r)^2.\]
	Hence if there holds $|\tilde{\xi}| \ll |\tilde{\xi}'| \leq 1/5$, then it will deduce a contradiction to the identity \eqref{E:Approximate local smooth-2}. Similarly $|\xi'|\ll |\xi|$ is not possible. Therefore we obtain the desired estimate $|\xi|\sim_{\alpha} |\xi'|$ and it remains to prove the claim \eqref{E:Approximate local smooth-1}.
	
	First, we assume $\tilde{\xi}'_1=x$ and $|(\tilde{\xi}'_2,\ldots,\tilde{\xi}'_d)|=y$ for real numbers $x$ and $y$. Recall the aforementioned vector $\bar{\eta}$ and consider the vector $(x,y)\in \bR^2$ with $|(x,y)|\leq 1/5$. To prove the desired estimate \eqref{E:Approximate local smooth-1}, we only need to show that
	\[C_1\leq \frac{\Big|(x+1,y)-\l(1/|(x+1,y)|^{\alpha-2},0\r)\Big|}{\sqrt{x^2+y^2}} \leq C_2,\]
	which means
	\begin{equation}\label{E:Approximate local smooth-3}
		C_1^2\leq \frac{(x+1)^2+[(x+1)^2+y^2]^{2-\alpha}-2(x+1)[(x+1)^2+y^2]^{1-\frac{\alpha}{2}} +y^2}{x^2+y^2} \leq C_2^2.
	\end{equation}
	If $|(x,y)|\geq c_0$ for some $c_0>0$, the existence of $C_1$ and $C_2$ is obvious. Hence we should investigate the upper and lower bound for the case $|(x,y)|\to 0$. In this case, by denoting $z=x^2+y^2+2x$ and using Taylor's theorem, we conclude that the middle item in \eqref{E:Approximate local smooth-3} equals to
	\[\frac{x^2+y^2+(\alpha-2)x(x^2+y^2+2x)+O(z^2)-2(x+1)O(z^2)+O(z^{2\alpha-4})+O(z^{\alpha-2})}{x^2+y^2}.\]
	The limit does not exist as $(x,y)\to(0,0)$, but we can give the bounds for all points $(x,y)$ in the neighborhood of origin. Notice that for $|(x,y)|\leq \min\{3/(\alpha-2), 1/5\}$ there holds
	\[(\alpha-2)x(x^2+y^2+2x)\geq \frac{-x^2-y^2}{3}, \quad (\alpha-2)x(x^2+y^2+2x)\leq 3(\alpha-2)(x^2+y^2).\]
	Hence we obtain the desired lower bound $1/2$ and upper bound $3\alpha$ for the case $|(x,y)|<c_0$ with $c_0$ small enough. This further implies the desired conclusion \eqref{E:Approximate local smooth-3}.
\end{proof}

\begin{lemma}\label{L:Approximate local convergence}
	Let $(\xi_n)\subset \bR^d$ with $|\xi_n|\to \infty$. If a bounded sequence of functions $(f_n)$ in $L^2(\bR^d)$ satisfies
	\[\supp \widehat{f}_n \subset \{\xi: |\xi|\leq |\xi_n|/5\}, \quad f_n\rhu 0\]
	weakly in $L^2(\bR^d)$ as $n\to\infty$, then up to subsequences we have
	\[[T_{\alpha}^n]f_n\to0\]
	strongly in $L_{loc, t,x}^2(\bR^{d+1})$ and hence $[T_{\alpha}^n]f_n(t,x)\to0$ almost everywhere in $\bR^{d+1}$.
\end{lemma}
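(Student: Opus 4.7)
The plan is to adapt the proof of Lemma~\ref{L:Local convergence} to the frequency-shifted setting, replacing the local smoothing ingredient (Lemma~\ref{L:Local smooth}) with its approximate-symmetry counterpart Lemma~\ref{L:Approximate local smooth}. I would first establish strong convergence in $L^2_{loc,t,x}(\bR^{d+1})$; the a.e. conclusion then follows from a Cantor diagonal argument across an exhaustion of $\bR^{d+1}$ by compact sets. Fix a compact $K\subset\bR^{d+1}$ and $\varepsilon>0$, and decompose $f_n=P_\Lambda f_n+P_\Lambda^\perp f_n$ with $P_\Lambda$ the Fourier projection onto $\{|\xi|\leq\Lambda\}$ (for $n$ large we have $\Lambda<|\xi_n|/5$).

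For the high-frequency piece $P_\Lambda^\perp f_n$, a Fourier-side check shows that the inverse multiplier $\psi_{\xi_n}(\xi)^{-1}=|\xi_n|^{-(\alpha-2)/q_0}|\xi|^{-1/2}|\xi+\xi_n|^{(\alpha-2)/q_0}$ is bounded by $C\Lambda^{-1/2}$ on the support $\{\Lambda<|\xi|\leq|\xi_n|/5\}$; combining the operator identity $[\bar T^{\xi_n}_\alpha]g=[\psi_{\xi_n}(D)][\bar T^{\xi_n}_\alpha](\psi_{\xi_n}(D)^{-1}g)$ with Lemma~\ref{L:Approximate local smooth} then gives, for any Schwartz $\phi\geq 0$,
\[
\int \phi(y)\,\bigl|[\bar T^{\xi_n}_\alpha]P_\Lambda^\perp f_n(\tau,y)\bigr|^2\,dy\,d\tau \leq C_\phi\Lambda^{-1}B^2.
\]
Passing to the $(t,x)$-frame via the pointwise identity $|[T^n_\alpha]f(t,x)|=|\xi_n|^{(\alpha-2)/q_0}|[\bar T^{\xi_n}_\alpha]f(t|\xi_n|^{\alpha-2},x+t\alpha|\xi_n|^{\alpha-2}\xi_n)|$---whose shear Jacobian $|\xi_n|^{\alpha-2}$ combines with the squared prefactor into $|\xi_n|^{-2(\alpha-2)/(d+2)}$---and choosing $\phi$ adapted to the image of $K$ under this shear yields $\|[T^n_\alpha]P_\Lambda^\perp f_n\|_{L^2(K)}\lesssim_{K}\Lambda^{-1/2}B$, which is less than $\varepsilon/2$ for $\Lambda$ large. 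For the low-frequency piece $P_\Lambda f_n$ with $\Lambda$ now fixed, the Taylor expansion $|\xi+\xi_n|^\alpha=|\xi_n|^\alpha+\alpha|\xi_n|^{\alpha-2}\xi_n\xi+|\xi_n|^{\alpha-2}\Phi_n(\xi)$ rewrites $[T^n_\alpha]P_\Lambda f_n(t,x)$ as $|\xi_n|^{(\alpha-2)/q_0}e^{it|\xi_n|^\alpha}$ times an oscillatory integral of the form $\int_{|\xi|\leq\Lambda}|\xi/|\xi_n|+\bar\xi_n|^{(\alpha-2)/q_0}e^{iy_n\xi+it|\xi_n|^{\alpha-2}\Phi_n(\xi)}\widehat{f_n}(\xi)\,d\xi$, with uniformly bounded amplitude and phase coefficients of order $|\xi_n|^{\alpha-1}$. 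A non-stationary-phase analysis parallel to the one in the proof of Lemma~\ref{L:Asymptotic schrodinger}, coupled with a smooth approximation of $\widehat{f_n}$ and the weak convergence $\widehat{f_n}\rhu 0$, then produces decay in $|\xi_n|$ that overwhelms the prefactor $|\xi_n|^{(\alpha-2)/q_0}$ and delivers pointwise vanishing on $K\cap\{t\neq 0\}$. The Strichartz bound $\|[T^n_\alpha]f_n\|_{L^{q_0}(K)}\leq\mathbf{M}_{d,\alpha}B$ supplies uniform integrability, so dominated convergence upgrades this pointwise vanishing to $L^2(K)$-strong convergence.

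The main obstacle is managing the large $|\xi_n|^{(\alpha-2)/q_0}$ prefactor intrinsic to $[T^n_\alpha]$. In the high-frequency step this forces the Schwartz weight $\phi$ to be chosen adapted to an $n$-dependent region of spatial diameter $\sim|\xi_n|^{\alpha-1}$ while retaining the $\Lambda^{-1/2}$ gain, which succeeds only by virtue of the Jacobian cancellation from the shear change of variables. In the low-frequency step a direct weak-convergence pairing against the large-amplitude test function $|\xi+\xi_n|^{(\alpha-2)/q_0}e^{-ix\xi-it|\xi+\xi_n|^\alpha}$ is unavailable, so one must instead work in the shear coordinates and exploit non-stationary phase to recover the required decay, as in the argument used to justify the asymptotic Schr\"odinger limit.
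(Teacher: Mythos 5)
Your skeleton (the $[P_{\Lambda}]$/$[P_{\Lambda}^{\bot}]$ splitting, Lemma \ref{L:Approximate local smooth} for the high frequencies, pointwise convergence plus dominated convergence for the low frequencies) is exactly the paper's, but the paper runs the whole argument for the \emph{normalized} operator $[\bar{T}_{\alpha}^n]$ — bounded amplitude $\l|\xi/|\xi_n|+\bar{\xi}_n\r|^{\frac{\alpha-2}{q_0}}$ and phase $\Phi_n$ — and never returns to the original $(t,x)$-frame: the $[P_{\Lambda}^{\bot}]$ piece is controlled by Lemma \ref{L:Approximate local smooth} together with the observation that $\psi_{\xi_n}(\xi)\gtrsim \Lambda^{1/2}$ on $\{\Lambda\leq |\xi|\leq |\xi_n|/5\}$, and the $[P_{\Lambda}]$ piece by pairing $\widehat{f}_n\rhu 0$ against the uniformly bounded test functions $\mathds{1}_{\{|\xi|\leq\Lambda\}}\l|\xi/|\xi_n|+\bar{\xi}_n\r|^{\frac{\alpha-2}{q_0}}e^{-ix\xi-it\Phi_n(\xi)}$, which converge in $L^2_{\xi}$ for each fixed $(t,x)$ by \eqref{E:Asmptotic schrodinger-0.8}, followed by dominated convergence on compact sets. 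This normalized version is also the one actually invoked later (in Theorem \ref{T:Mass} the lemma is used to get $[\bar{T}_{\alpha}^n]r_n\to 0$ a.e.). Your proposal instead carries the prefactor $|\xi_n|^{\frac{\alpha-2}{q_0}}$ and the shear $(t,x)\mapsto (t|\xi_n|^{\alpha-2}, x+t\alpha|\xi_n|^{\alpha-2}\xi_n)$ back to the original frame, and that is where it breaks.

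Two concrete gaps. First, in the high-frequency step your claimed bound $\|[T_{\alpha}^n]P_{\Lambda}^{\bot}f_n\|_{L^2(K)}\lesssim_K \Lambda^{-1/2}B$ is not uniform in $n$: to have $\phi\gtrsim 1$ on the sheared image of $K$, whose spatial diameter is $\sim|\xi_n|^{\alpha-1}$, you must dilate the weight, and the constant in Lemma \ref{L:Approximate local smooth} is proportional to $\|\widehat{\phi}\|_{L^{\infty}}\leq \|\phi\|_{L^1}$, hence grows like $|\xi_n|^{d(\alpha-1)}$, while the Jacobian-plus-prefactor cancellation you invoke supplies only $|\xi_n|^{-\frac{2(\alpha-2)}{d+2}}$; since $d(\alpha-1)-\frac{2(\alpha-2)}{d+2}>0$, the estimate blows up with $n$. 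Second, in the low-frequency step the "smooth approximation of $\widehat{f_n}$ plus non-stationary phase" scheme is not controlled: decay estimates require derivative bounds on the amplitude which a merely $L^2$-bounded, weakly null sequence does not provide, and after approximating each $\widehat{f}_n$ within $\epsilon$ in $L^2$ the error contributes $\lesssim |\xi_n|^{\frac{\alpha-2}{q_0}}\Lambda^{d/2}\epsilon$, which is not small uniformly in $n$ for fixed $\epsilon$; it is also then unclear where the hypothesis $f_n\rhu 0$ genuinely enters. Both difficulties vanish in the $[\bar{T}_{\alpha}^n]$-frame, where there is no shear and no large prefactor. Incidentally, if one insists on the literal un-normalized statement, the fastest route is simply to rerun the Schur-test computation of Lemma \ref{L:Approximate local smooth} with the multiplier $|\xi+\xi_n|^{\frac{\alpha-2}{q_0}}$ supported where $|\xi+\xi_n|\sim|\xi_n|$, which gives $\int\phi(x)\,|[T_{\alpha}^n]f_n(t,x)|^2\,\ddd x\,\ddd t\lesssim_{\phi}|\xi_n|^{-\frac{d+2\alpha-2}{d+2}}\|f_n\|_{L^2}^2\to 0$ without any weak convergence — but the statement the paper needs, and whose proof it sketches, is the one for $[\bar{T}_{\alpha}^n]$, and there your argument does not go through as written.
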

\begin{proof}[\textbf{Proof of Lemma \ref{L:Approximate local convergence}}]
	Considering the function $\psi_\eta(\xi)$ defined in Lemma \ref{L:Approximate local smooth}, it is not hard to see that for any given $\varepsilon>0$, there exists $\Lambda$ large enough depending only on $\varepsilon$ such that for all $|\xi|\geq \Lambda$ with $|\xi|\leq |\eta|/5$ there holds
	\[\frac{1}{\psi_\eta(\xi)}<\varepsilon.\]
	Hence the desired conclusion follows from a standard argument by imitating the proof of Lemma \ref{L:Local convergence}, which means controlling the $[P_{\Lambda}^{\bot}]$ term by using Lemma \ref{L:Approximate local smooth} and estimating the $[P_{\Lambda}]$ term by using dominated convergence theorem since we have the pointwise convergence \eqref{E:Asmptotic schrodinger-0.8}. Similar proof also can be found in \cite[Lemma 4.3]{FS2018}, and the details are omitted here for avoiding too much repetition.
\end{proof}

\section{Method of missing mass}\label{S:Method of Missing Mass}
In this section, by adapting the arguments in \cite[Section 2]{FLS2016} and \cite[Section 2]{FS2018}, we show how the missing mass method can give the desired precompactness result Theorem \ref{T:Precompactness}. First, we introduce some definitions.
\begin{definition}
	Let $(f_n)\subset L^2(\bR^d)$. We write $f_n\rsa 0$ if for all sequences of symmetries $([g_n])\subset G$ there holds the weak convergence $[g_n]f_n \rhu 0$ in $L^2(\bR^d)$.
\end{definition}
Define the set
\[\mathcal{P}:=\l\{(f_n): \|f_n\|_{L^2(\bR^d)}=1, f_n\rsa 0\r\}\]
and the \textit{sharp $\alpha$-Strichartz constant with respect to $\mathcal{P}$} as follows
\[\mathbf{M}^{*}_{d,\alpha}:=\sup \l\{\limsup_{n\to\infty} \l\|[E_{\alpha}]f_n\r\|_{L_{t,x}^{q_0}(\bR^{d+1})}: (f_n) \in \mathcal{P} \r\}.\]
As we consider the precompactness of extremal sequences for $\mathbf{M}_{d,\alpha}$, it is obvious that the sequences in $\mathcal{P}$ are not precompact up to symmetries and thus are our enemies. On the other hand, our next result Proposition \ref{P:Weak to strong} states that all the enemies are in $\mathcal{P}$.

\begin{prop}\label{P:Weak to strong}
	Let $d\geq 2$. All the extremal sequences for $\mathbf{M}_{d,\alpha}$ are precompact up to symmetries if and only if
	\[\mathbf{M}_{d,\alpha}>\mathbf{M}_{d,\alpha}^{*}.\]
\end{prop}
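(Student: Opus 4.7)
The ``only if'' direction is immediate: if $\mathbf{M}_{d,\alpha}=\mathbf{M}_{d,\alpha}^{*}$, pick a sequence $(f_n)\in\mathcal{P}$ with $\|[E_{\alpha}]f_n\|_{L_{t,x}^{q_0}}\to\mathbf{M}_{d,\alpha}$. Such $(f_n)$ is an extremal sequence, so by the precompactness hypothesis some $[g_n]f_n$ converges strongly in $L^2$ along a subsequence; but $(f_n)\in\mathcal{P}$ forces $[g_n]f_n\rhu 0$, so the strong limit vanishes, contradicting $\|[g_n]f_n\|_{L^2}=1$. For the ``if'' direction, assume $\mathbf{M}_{d,\alpha}>\mathbf{M}_{d,\alpha}^{*}$ and let $(f_n)$ be an extremal sequence. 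The plan is to run the missing mass method: extract a non-zero weak bubble via Corollary \ref{C:Non-zero weak limit}, split via a Br\'ezis--Lieb identity, and use the strict inequality to exclude the residual mass.

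\textbf{Bubble extraction and dichotomy.} Since $\|[E_{\alpha}]f_n\|_{L_{t,x}^{q_0}}\to\mathbf{M}_{d,\alpha}>0$, Corollary \ref{C:Non-zero weak limit} supplies symmetries $[g_n]\in G$ and parameters $(\xi_n,h_n)$ with $h_n\|\xi_n\|_{\min}\geq 1/2$ such that, along a subsequence, $\widehat{[g_n]f_n}(\,\cdot\,+h_n\xi_n)\rhu\widehat{V}\not\equiv 0$ in $L^2$. A dichotomy arises in $h_n\xi_n$. If $h_n\xi_n\to\zeta_{\infty}$ stays bounded, then $[g_n]f_n\rhu \tilde{V}:=V(\cdot)e^{i\zeta_{\infty}\cdot}$ in $L^2$, a genuine weak limit whose Strichartz norm is at most $\mathbf{M}_{d,\alpha}\|V\|_{L^2}$. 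If instead $h_n|\xi_n|\to\infty$, the bubble is the approximate-symmetry sequence $P_n V:=[g_n^{-1}]\l(V(\cdot)e^{ih_n\xi_n\cdot}\r)$; Lemma \ref{L:Asymptotic schrodinger} gives $\|[E_{\alpha}]P_n V\|_{L_{t,x}^{q_0}}\to \mathbf{a}_{d,\alpha}^{*}\|[e^{it\Delta}][\tilde{A}_0]V\|_{L_{t,x}^{q_0}}\leq \mathbf{a}_{d,\alpha}^{*}\mathbf{S}_d^{*}\|V\|_{L^2}$, and since a normalized escaping bubble lies in $\mathcal{P}$ (any fixed scaling or translation sends it weakly to zero by strong frequency oscillation) one deduces $\mathbf{a}_{d,\alpha}^{*}\mathbf{S}_d^{*}\leq \mathbf{M}_{d,\alpha}^{*}$, so the escaping contribution is controlled by $\mathbf{M}_{d,\alpha}^{*}\|V\|_{L^2}$.

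\textbf{Br\'ezis--Lieb splitting and iteration.} Writing $[g_n]f_n=P_n V+r_n$ with $r_n\rhu 0$, the pointwise convergence $[E_{\alpha}]r_n\to 0$ a.e.\ follows from Lemma \ref{L:Local convergence} in the bounded case and Lemma \ref{L:Approximate local convergence} in the escaping case. The Br\'ezis--Lieb type lemma of \cite[Lemma 3.1]{FLS2016} then yields
\[
\|f_n\|_{L^2}^2=\|V\|_{L^2}^2+\|r_n\|_{L^2}^2+o(1),\qquad \|[E_{\alpha}]f_n\|_{L_{t,x}^{q_0}}^{q_0}=\|[E_{\alpha}]P_n V\|_{L_{t,x}^{q_0}}^{q_0}+\|[E_{\alpha}]r_n\|_{L_{t,x}^{q_0}}^{q_0}+o(1).
\]
Iterating on $r_n$ produces a profile decomposition with bubbles $(V_k)$ of masses $a_k=\|V_k\|_{L^2}^2$ and Strichartz constants $c_k\in\{\mathbf{M}_{d,\alpha},\mathbf{M}_{d,\alpha}^{*}\}$, together with a residual $R_n$ of vanishing Strichartz norm. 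The quantitative lower bound $\|V_k\|_{L^2}\gtrsim A_k^{\beta}B^{-\gamma}$ in Corollary \ref{C:Non-zero weak limit} ensures the iteration terminates or gives summable $L^2$-masses.

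\textbf{Endgame and main obstacle.} With $\sum_k a_k\leq 1$ the identities combine into
\[
\mathbf{M}_{d,\alpha}^{q_0}\leq \sum_k c_k^{q_0}a_k^{q_0/2}\leq \mathbf{M}_{d,\alpha}^{q_0}\sum_k a_k^{q_0/2}\leq \mathbf{M}_{d,\alpha}^{q_0}\l(\sum_k a_k\r)^{q_0/2}\leq \mathbf{M}_{d,\alpha}^{q_0},
\]
where the penultimate step uses $q_0/2>1$ and is strict unless at most one $a_k$ is positive. Equality throughout forces a unique non-trivial profile with $a_k=1$, no residual mass, and $c_k=\mathbf{M}_{d,\alpha}$, i.e.\ a bounded-frequency bubble; the last requirement is secured by the strict inequality $\mathbf{M}_{d,\alpha}^{*}<\mathbf{M}_{d,\alpha}$, which rules out any escaping-frequency profile from carrying positive mass. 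Hence $[g_n]f_n\to V$ strongly in $L^2$, giving precompactness of $(f_n)$ up to symmetries. The main obstacle is the rigorous construction of the full profile decomposition with both bubble types appearing asymptotically orthogonally across all iteration levels (bookkeeping of symmetry parameters, compatibility of Br\'ezis--Lieb splits, and quasi-orthogonality of distinct profiles); a secondary delicacy is verifying that escaping-frequency bubbles indeed belong to $\mathcal{P}$, i.e.\ that no sequence of scalings, space-time translations, and $\alpha$-propagators can compensate for the underlying frequency oscillation in the weak limit.
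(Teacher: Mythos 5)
Your ``if'' direction rests on a full linear profile decomposition for the $\alpha$-Strichartz inequality in dimension $d\geq 2$ (iterated extraction of two types of bubbles, asymptotic orthogonality of parameters across all levels, Strichartz decoupling, vanishing residual). That machinery is precisely what is not available for general $(d,\alpha)$ with $d\geq 2$ -- the decompositions in the literature cover $d=1$ or $\alpha=4$ -- and you do not construct it; you explicitly flag it as ``the main obstacle.'' So the central engine of your argument is missing, and this is a genuine gap rather than a bookkeeping issue. It is also unnecessary: the whole point of the formulation via $\mathcal{P}$ is that, once $\mathbf{M}_{d,\alpha}>\mathbf{M}_{d,\alpha}^{*}$, an extremal sequence cannot lie in $\mathcal{P}$ (its Strichartz norms tend to $\mathbf{M}_{d,\alpha}$, exceeding the supremum defining $\mathbf{M}_{d,\alpha}^{*}$), so the failure of $f_n\rsa 0$ already produces symmetries $[g_n]\subset G$ with $[g_n]f_n\rhu v\neq 0$ -- no frequency parameter, no Corollary \ref{C:Non-zero weak limit}, and no bounded/escaping dichotomy is needed at this stage. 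Writing $r_n=[g_n]f_n-v$, Lemma \ref{L:Local convergence} gives $[E_{\alpha}]r_n\to 0$ a.e., the Br\'ezis--Lieb variant \cite[Lemma 3.1]{FLS2016} decouples the $L^{q_0}_{t,x}$ norms, and the crude bounds $\|[E_{\alpha}]v\|_{L^{q_0}_{t,x}}\leq \mathbf{M}_{d,\alpha}\|v\|_{L^2}$ and $\|[E_{\alpha}]r_n\|_{L^{q_0}_{t,x}}\leq \mathbf{M}_{d,\alpha}\|r_n\|_{L^2}$ combined with $q_0>2$ force $1\leq \|v\|_{L^2}^{q_0}+(1-\|v\|_{L^2}^2)^{q_0/2}$, hence $\|v\|_{L^2}=1$ and strong convergence. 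The escaping-frequency apparatus (Corollary \ref{C:Non-zero weak limit}, Lemmas \ref{L:Asymptotic schrodinger} and \ref{L:Approximate local convergence}) is only needed to evaluate $\mathbf{M}_{d,\alpha}^{*}$ and prove its attainment (Theorem \ref{T:Mass}), and even there a single extraction suffices because the remainder is estimated directly via the definition of $\mathbf{M}_{d,\alpha}^{*}$ (it still $\rsa 0$), avoiding any iteration.

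Two further points. In the ``only if'' direction you ``pick a sequence $(f_n)\in\mathcal{P}$ with $\|[E_{\alpha}]f_n\|_{L^{q_0}_{t,x}}\to\mathbf{M}_{d,\alpha}$,'' but $\mathbf{M}_{d,\alpha}^{*}$ is a supremum of limsups over sequences, so the existence of a single maximizing sequence inside $\mathcal{P}$ is not automatic; the paper obtains it from the attainment statement in Theorem \ref{T:Mass} (alternatively one can argue softly by a diagonal extraction, using that $f_n\rsa 0$ is equivalent to $\sup_{g\in G}|\langle [g]f_n,\phi\rangle|\to 0$ for every fixed $\phi\in L^2$). Finally, your justification that escaping bubbles belong to $\mathcal{P}$ (``any fixed scaling or translation sends it weakly to zero'') tests only fixed symmetries, whereas membership in $\mathcal{P}$ requires weak vanishing under arbitrary sequences of symmetries, including scalings $h_n\sim |\xi_n|^{-1}$ that cancel the escape and must be handled by a concentration (shrinking Fourier support) argument instead of oscillation.
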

\begin{proof}[\textbf{Proof of Proposition \ref{P:Weak to strong}}]
	It is clear that $\mathbf{M}_{d,\alpha}\geq \mathbf{M}_{d,\alpha}^{*}$. Hence the `only if' part comes from the definition of $\mathbf{M}_{d,\alpha}^{*}$ and Theorem \ref{T:Mass} which claims that the supremum value $\mathbf{M}_{d,\alpha}^{*}$ can be attained.
	
	Thus, we only need to show the `if' part. Assume that $\mathbf{M}_{d,\alpha} > \mathbf{M}_{d,\alpha}^{*}$ and $(f_n)$ is an extremal sequence for $\mathbf{M}_{d,\alpha}$. In this case there exists $([g_n])\subset G$ such that, up to subsequences, $[g_n]f_n\rhu v\neq0$ weakly in $L^2(\bR^d)$ as $n\to\infty$ since $(f_n)\notin \mathcal{P}$. Hence if we write
	\[v_n:=[g_n]f_n, \quad r_n:=v_n-v,\]
	then for $n\to\infty$ we have $r_n\rhu0$ weakly in $L^2(\bR^d)$ and further
	\begin{equation}\label{E:Weak to strong-1}
		1=\|f_n\|_{L^2}^2=\|v_n\|_{L^2}^2=\|v\|_{L^2}^2+\|r_n\|_{L^2}^2+o(1)
	\end{equation}
	due to the fact that $L^2$ is a Hilbert space. Meanwhile the local convergence Lemma \ref{L:Local convergence} implies that $[E_{\alpha}]r_n(t,x)\to 0$ almost everywhere in $\bR^{d+1}$ since $q_0>2$. Then as $n\to\infty$, a variant of Br\'ezis-Lieb lemma \cite[Lemma 3.1]{FLS2016} gives that
	\begin{align*}
		\mathbf{M}_{d,\alpha}^{q_0} &=\|[E_{\alpha}]v_n\|_{L_{t,x}^{q_0}}^{q_0} +o(1) \\
		&= \|[E_{\alpha}]v\|_{L_{t,x}^{q_0}}^{q_0} + \|[E_{\alpha}]r_n\|_{L_{t,x}^{q_0}}^{q_0} +o(1) \\
		&\leq \mathbf{M}_{d,\alpha}^{q_0} \l(\|v\|_{L_x^2}^{q_0}+\|r_n\|_{L_x^2}^{q_0}\r) +o(1).
	\end{align*}
	Combining this estimate with \eqref{E:Weak to strong-1} and letting $n\to\infty$, we conclude
	\[1\leq \|v\|_{L^2}^{q_0}+\l(1-\|v\|_{L^2}^2\r)^{q_0/2}.\]
	Therefore the fact $q_0>2$ implies either $\|v\|_{L^2}^2=0$ or $1-\|v\|_{L^2}^2=0$. Since $v\neq 0$, we obtain that
	\[0=1-\|v\|_{L^2}^2=\lim_{n\to\infty} \|r_n\|_{L^2}^2 =\lim_{n\to\infty} \|v_n-v\|_{L^2}^2.\]
	This states that $v_n=[g_n]f_n$ convergence strongly to $v$ in $L^2(\bR^d)$ and completes the proof.
\end{proof}

The Proposition \ref{P:Weak to strong} above gives a characterization on the precompactness of extremal sequences for $\mathbf{M}_{d,\alpha}$. Hence our main result Theorem \ref{T:Precompactness} is reduced to the following Theorem \ref{T:Mass} which is also used in the proof of Proposition \ref{P:Weak to strong}.
\begin{thm}\label{T:Mass}
	Suppose that $\mathbf{a}_{d,\alpha}^{*}:=(\alpha-1)^{\frac{-1}{2d+4}} (\alpha/2)^{\frac{-d}{2d+4}}$ and $d\geq 2$. There holds
	\[\mathbf{M}^{*}_{d,\alpha}=\mathbf{a}_{d,\alpha}^{*} \mathbf{S}_d^{*}.\]
	Furthermore, the supremum $\mathbf{M}^{*}_{d,\alpha}$ is attained. In other words, there is a sequence $(f_n)\subset L^2(\bR^d)$ with $\|f_n\|_{L_x^2}=1$ such that $f_n\rsa 0$ and $\limsup_{n\to\infty} \|[E_{\alpha}]f_n\|_{L_{t,x}^{q_0}} =\mathbf{M}_{d,\alpha}^{*}$.
\end{thm}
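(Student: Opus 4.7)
I would prove the two inequalities $\mathbf{M}_{d,\alpha}^{*} \geq \mathbf{a}_{d,\alpha}^{*} \mathbf{S}_d^{*}$ (with attainability) and $\mathbf{M}_{d,\alpha}^{*} \leq \mathbf{a}_{d,\alpha}^{*} \mathbf{S}_d^{*}$ separately. The lower bound is a direct construction based on Lemma \ref{L:Asymptotic schrodinger}. The upper bound follows the familiar missing-mass template: extract a profile via Corollary \ref{C:Non-zero weak limit}, apply the Br\'ezis--Lieb-type lemma \cite[Lemma 3.1]{FLS2016}, bootstrap using that the translated residual still lies in the competitor set $\mathcal{P}$, and close by a convexity inequality.

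\textbf{Lower bound and attainment.} Pick an extremal $W \in L^{2}(\bR^d)$ for $\mathbf{S}_d^{*}$, whose existence is guaranteed by \cite{Shao2009EJDE}. Fix any direction $\xi_0 \in \bS^{d-1}$ and set $f := [\tilde{A}_0^{-1}]W$ with $[\tilde{A}_0]$ defined via \eqref{E:Approximate unitary operator}, so $\|f\|_{L^{2}}=1$. Choose $(\xi_n)\subset\bR^d$ with $|\xi_n|\to\infty$ and $\xi_n/|\xi_n|\to\xi_0$, and define $\widehat{f}_n(\xi) := \widehat{f}(\xi-\xi_n)$. Lemma \ref{L:Asymptotic schrodinger} immediately yields $\|[E_\alpha]f_n\|_{L_{t,x}^{q_0}} \to \mathbf{a}_{d,\alpha}^{*} \|[e^{it\Delta}]W\|_{L_{t,x}^{q_0}} = \mathbf{a}_{d,\alpha}^{*}\mathbf{S}_d^{*}$. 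It remains to verify $(f_n)\in\mathcal{P}$: for any $[g_n]\in G$, a direct Fourier-side computation gives $\widehat{[g_n]f_n}(\xi) = (h_n)^{-d/2} e^{i(x_n \xi/h_n+t_n|\xi/h_n|^\alpha)} \widehat{f}(\xi/h_n - \xi_n)$, which is $L^{2}$-concentrated around $\xi \approx h_n\xi_n$; pairing against Schwartz test functions and case-splitting on whether $h_n$ stays bounded, tends to zero, or tends to infinity (exploiting $|\xi_n|\to\infty$ and Schwartz decay in each case), followed by density, yields $[g_n]f_n \rhu 0$ in $L^{2}$.

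\textbf{Upper bound via the missing-mass template.} Take $(f_n) \in \mathcal{P}$ with $\|[E_\alpha]f_n\|_{L_{t,x}^{q_0}}\to \mathbf{M}_{d,\alpha}^{*}$, which we may assume to be positive since the lower bound gives $\mathbf{M}_{d,\alpha}^{*}>0$. Corollary \ref{C:Non-zero weak limit} produces symmetries $[g_n]\in G$ and frequency parameters $\eta_n := h_n\xi_n$ with $|\eta_n|\geq 1/2$ such that $\psi_n := e^{-ix\eta_n}[g_n]f_n \rhu V$ weakly in $L^{2}$, with the quantitative bound $\|V\|_{L^{2}}\geq C(\mathbf{M}_{d,\alpha}^{*})^{\beta}>0$. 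Since $(f_n)\in\mathcal{P}$ forces $[g_n]f_n \rhu 0$, the sequence $(\eta_n)$ cannot stay bounded---otherwise extracting $\eta_n\to\xi_*$ would give $[g_n]f_n \rhu e^{ix\xi_*}V\neq 0$. Hence $|\eta_n|\to\infty$, placing us in the approximate-symmetry regime of Section \ref{S:Approximate symmetry}; identity \eqref{E:Asmptotic schrodinger-0.5} then gives $\|[E_\alpha]f_n\|_{L_{t,x}^{q_0}} = \|[T_\alpha^{\eta_n}]\psi_n\|_{L_{t,x}^{q_0}}$. Decomposing $\psi_n = V + r_n$ with $r_n\rhu 0$, and using density to reduce to the case $\widehat{V}$ of compact support, Lemma \ref{L:Approximate local convergence} delivers $[T_\alpha^{\eta_n}]r_n \to 0$ almost everywhere, so the Br\'ezis--Lieb-type lemma \cite[Lemma 3.1]{FLS2016} yields
\[\|[T_\alpha^{\eta_n}]\psi_n\|_{L_{t,x}^{q_0}}^{q_0} = \|[T_\alpha^{\eta_n}]V\|_{L_{t,x}^{q_0}}^{q_0} + \|[T_\alpha^{\eta_n}]r_n\|_{L_{t,x}^{q_0}}^{q_0} + o(1).\]
Lemma \ref{L:Asymptotic schrodinger} bounds the $V$-term above by $(\mathbf{a}_{d,\alpha}^{*}\mathbf{S}_d^{*})^{q_0}\|V\|_{L^{2}}^{q_0}+o(1)$.

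\textbf{Closing the estimate and the main obstacle.} The crucial step is to upgrade the residual by showing that $\tilde{r}_n := e^{ix\eta_n}r_n$ satisfies $\tilde{r}_n \rsa 0$. Indeed, writing $\tilde{r}_n = [g_n]f_n - e^{ix\eta_n}V$, for any $[h_n]\in G$ we have $[h_n][g_n]f_n \rhu 0$ by $(f_n)\in\mathcal{P}$ and the group structure of $G$; so the task reduces to showing $[h_n](e^{ix\eta_n}V)\rhu 0$ in $L^{2}$ for every $[h_n]\in G$, using only $|\eta_n|\to\infty$. An explicit Fourier-side computation shows that $|\widehat{[h_n](e^{ix\eta_n}V)}(\xi)| = (h_n')^{-d/2}|\widehat{V}(\xi/h_n' - \eta_n)|$, and case analysis on whether the scaling parameter $h_n'$ stays bounded, tends to zero, or tends to infinity---together with Schwartz decay of test functions and density---produces the required weak convergence; this case analysis is the main technical obstacle, since $G$ acts by arbitrary scalings. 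Once $\tilde{r}_n\rsa 0$ is established, the identity $\|[T_\alpha^{\eta_n}]r_n\|_{L_{t,x}^{q_0}} = \|[E_\alpha]\tilde{r}_n\|_{L_{t,x}^{q_0}}$ combined with $\|\tilde{r}_n\|_{L^{2}}^{2}\to 1-\|V\|_{L^{2}}^{2}$ and the very definition of $\mathbf{M}_{d,\alpha}^{*}$ yields $\limsup \|[T_\alpha^{\eta_n}]r_n\|_{L_{t,x}^{q_0}} \leq \mathbf{M}_{d,\alpha}^{*}(1-\|V\|_{L^{2}}^{2})^{1/2}$. Inserting everything into the Br\'ezis--Lieb identity gives
\[(\mathbf{M}_{d,\alpha}^{*})^{q_0} \leq (\mathbf{a}_{d,\alpha}^{*}\mathbf{S}_d^{*})^{q_0}\|V\|_{L^{2}}^{q_0} + (\mathbf{M}_{d,\alpha}^{*})^{q_0}(1-\|V\|_{L^{2}}^{2})^{q_0/2},\]
and since $q_0>2$ the elementary inequality $s^{q_0/2}+(1-s)^{q_0/2}\leq 1$ on $[0,1]$ (a consequence of $t^{q_0/2}\leq t$ there) combined with $\|V\|_{L^{2}}\in(0,1]$ forces $\mathbf{M}_{d,\alpha}^{*}\leq \mathbf{a}_{d,\alpha}^{*}\mathbf{S}_d^{*}$. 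The attaining sequence from the lower-bound construction simultaneously proves that the supremum is realized.
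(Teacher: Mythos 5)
Your overall route is the same as the paper's: lower bound by translating a Schr\"odinger extremal to high frequency via Lemma \ref{L:Asymptotic schrodinger}, upper bound by extracting a profile with Corollary \ref{C:Non-zero weak limit}, applying the Br\'ezis--Lieb variant, bounding the $V$-term by the asymptotic Schr\"odinger behavior, bounding the residual through the definition of $\mathbf{M}_{d,\alpha}^{*}$ after checking that the modulated remainder still satisfies $\rsa 0$, and closing with the strict convexity coming from $q_0>2$. However, there is one genuine gap in the key pointwise-convergence step. You set $r_n:=\psi_n-V$ and invoke Lemma \ref{L:Approximate local convergence} to get $[T_{\alpha}^{\eta_n}]r_n\to 0$ almost everywhere, reducing ``by density'' to $\widehat{V}$ of compact support. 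But the hypothesis of Lemma \ref{L:Approximate local convergence} (inherited from the weighted local smoothing Lemma \ref{L:Approximate local smooth}) is a Fourier-support condition on the \emph{sequence} itself, namely $\supp\widehat{r}_n\subset\{\xi:|\xi|\le|\eta_n|/5\}$, not on $V$: the weight $\psi_{\eta}$ and the analysis of the level sets $\mathcal{V}_{\xi,\eta}$ degenerate once frequencies comparable to $|\eta_n|$ (in particular near $-\eta_n$) are allowed, so the lemma simply does not apply to the unlocalized remainder. Compact support of $\widehat{V}$ does nothing to localize $\widehat{\psi}_n-\widehat{V}$. The paper's proof inserts exactly the missing device: with $E_n=\{|\xi|\le h_n|\xi_n|/5\}$ it splits $\widehat{\psi}_n-\widehat{V}$ into $\widehat{r}_n=\mathds{1}_{E_n}(\cdot)$, to which Lemma \ref{L:Approximate local convergence} legitimately applies, and a tail $\widehat{q}_n=(\mathds{1}_{\bR^d}-\mathds{1}_{E_n})(\cdot)$ that is disposed of separately using the uniform $L^2\to L^{q_0}_{t,x}$ bound \eqref{E:Asmptotic schrodinger-0.5}. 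Your argument needs this (or an equivalent) frequency truncation before the Br\'ezis--Lieb step; as written, the a.e.\ convergence of $[T_{\alpha}^{\eta_n}]r_n$ is unsupported.

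Two smaller points. First, you begin the upper bound by taking $(f_n)\in\mathcal{P}$ with $\|[E_{\alpha}]f_n\|_{L^{q_0}_{t,x}}\to\mathbf{M}_{d,\alpha}^{*}$; since $\mathbf{M}_{d,\alpha}^{*}$ is a supremum of limsups over sequences, the existence of a single competitor attaining it is not yet available (it is part of the conclusion), so you should either justify a diagonal extraction that preserves $\rsa 0$ or, as the paper does, run the argument for an arbitrary $(f_n)\in\mathcal{P}$ with $\limsup_n\|[E_{\alpha}]f_n\|_{L^{q_0}_{t,x}}\ge\tfrac12\mathbf{M}_{d,\alpha}^{*}$ and close with the quantitative lower bound $\|V\|_{L^2}\ge\tilde\gamma(d,\alpha)$ followed by a supremum over such sequences. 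Second, your verification that $e^{ix\eta_n}V$ (frequency-translated by $\eta_n$ with $|\eta_n|\to\infty$) satisfies $\rsa 0$, and hence $\tilde r_n\rsa 0$, matches the paper's corresponding claim $\widehat{V}(\cdot-h_n\xi_n)\rsa 0$ and is fine, but note that after the truncation fix the object fed into $\mathbf{M}_{d,\alpha}^{*}$ is the truncated remainder $\omega_n$ (with $\widehat{\omega}_n=\widehat{r}_n(\cdot-h_n\xi_n)$), so the $\rsa 0$ verification should be carried out for that truncated sequence, as in the paper.
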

\begin{proof}[\textbf{Proof of Theorem \ref{T:Mass}}]
	The proof follows from a more involved version of missing mass method. We first show that $\mathbf{M}_{d,\alpha}^{*}\geq \mathbf{a}_{d,\alpha}^{*} \mathbf{S}_{d}^{*}$. For a sequence $(\xi_n)$ with $|\xi_n|\to\infty$ and $\phi\in L^2(\bR^d)$ with $\|\phi\|_{L^2}=1$, we define $\widehat{f}_n(\xi):= \widehat{\phi}(\xi-\xi_n)$. It is not hard to see that $f_n\rsa 0$. Then Lemma \ref{L:Asymptotic schrodinger} implies that
	\[\lim_{n\to\infty} \l\|[E_{\alpha}]f_n\r\|_{L_{t,x}^{q_0}} =\mathbf{a}_{d,\alpha}^{*} \l\|[e^{it\Delta}]\phi\r\|_{L_{t,x}^{q_0}}.\]
	Hence we conclude
	\[\mathbf{M}_{d,\alpha}^{*}\geq \mathbf{a}_{d,\alpha}^{*} \l\|[e^{it\Delta}]\phi\r\|_{L_{t,x}^{q_0}}.\]
	By taking supremum over all such functions $\phi$, we obtain $\mathbf{M}_{d,\alpha}^{*}\geq \mathbf{a}_{d,\alpha}^{*} \mathbf{S}_{d}^{*}$.
	
	Furthermore, from \cite{Shao2009EJDE} it is known that there exists an extremal for $\mathbf{S}_{d}^{*}$. Then taking $\phi$ in the above argument to be this extremal will give the sequence $(f_n)$ with $\|f_n\|_{L^2}=1$ and $f_n\rsa 0$ such that
	\[\lim_{n\to\infty} \l\|[E_{\alpha}]f_n\r\|_{L_{t,x}^{q_0}} =\mathbf{a}_{d,\alpha}^{*} \mathbf{S}_{d}^{*}.\]
	Therefore, it remains to prove the reverse inequality $\mathbf{M}_{d,\alpha}^{*} \leq \mathbf{a}_{d,\alpha}^{*} \mathbf{S}_{d}^{*}$.
	
	We may assume $\mathbf{M}_{d,\alpha}^{*}>0$ without loss of generality. By the definition of $\mathbf{M}_{d,\alpha}^{*}$, there exists $(f_n)\subset L^2(\bR^d)$ satisfying $\|f_n\|_{L^2}=1$ and $f_n\rsa 0$ such that
	\[\limsup_{n\to\infty} \l\|[E_{\alpha}] f_n\r\|_{L_{t,x}^{q_0}}\geq \frac{1}{2}\mathbf{M}_{d,\alpha}^{*} >0.\]
	Then Corollary \ref{C:Non-zero weak limit} yields that there exist $([g_n]) \subset G$ and $\xi_n$ with $h_n |\xi_n|\geq \frac{1}{2}$ such that up to subsequences there holds
	\[\widehat{[g_n]f_n}\l(\xi+h_n\xi_n\r) \rhu \widehat{V}(\xi), \quad \|V\|_{L^2}\geq \tilde{\gamma}>0.\]
	Here $\tilde{\gamma}:=\tilde{\gamma}(d,\alpha)$ depends only on the dimension $d$ and $\alpha$. Note that $f_n\rsa 0$ implies $h_n|\xi_n|\to \infty$. Otherwise we have $\widehat{V}=0$ which is a contradiction. Hence if we set $E_n:=\{\xi\in \bR^d: |\xi|\leq h_n|\xi_n|/5\}$ and write 
	\[\widehat{r}_n:=\mathds{1}_{E_n} \l(\widehat{[g_n]f_n}(\cdot + h_n\xi_n) -\widehat{V}\r),\quad \widehat{q}_n:=(\mathds{1}_{\bR^d}-\mathds{1}_{E_n}) \l(\widehat{[g_n]f_n}(\cdot + h_n\xi_n) -\widehat{V}\r),\]
	then there holds
	\begin{equation*}
		\l\|[E_{\alpha}] f_n\r\|_{L_{t,x}^{q_0}} = \l\|[E_{\alpha}][g_n] f_n\r\|_{L_{t,x}^{q_0}} =\|[T_{\alpha}^n](r_n+q_n+V) \|_{L_{t,x}^{q_0}}= \|[\bar{T}_{\alpha}^n](r_n+q_n+V) \|_{L_{t,x}^{q_0}},
	\end{equation*}
	where $[T_{\alpha}^n]$ and $[\bar{T}_{\alpha}^n]$ are defined as
	\begin{equation*}
		[T_{\alpha}^n]f(t,x):=\int_{\bR^d} |\xi+h_n \xi_n|^{\frac{\alpha-2}{q_0}} e^{ix\xi +it|\xi+h_n\xi_n|^{\alpha}} \widehat{f}(\xi) \ddd \xi
	\end{equation*}
	and
	\begin{equation*}
		[\bar{T}_{\alpha}^n]f(t,x):=\int_{\bR^d} \l|\frac{\xi}{h_n|\xi_n|}+\bar{\xi}_n\r|^{\frac{\alpha-2}{q_0}} e^{ix\xi+it\Phi_n(\xi)} \widehat{f}(\xi) \ddd\xi.
	\end{equation*}
	Here $\bar{\xi_n}:= \xi_n/|\xi_n|$ and the function $\Phi_n(\xi)$ is defined by
	\[\Phi_n(\xi):=\frac{1}{|h_n \xi_n|^{\alpha-2}} \l[|\xi+h_n \xi_n|^{\alpha}-|h_n \xi_n|^{\alpha}- \alpha|h_n \xi_n|^{\alpha-2} h_n\xi_n\xi\r].\]
	Notice that $q_n\to0$ strongly in $L^2(\bR^d)$ and further $r_n\rhu 0$ weakly in $L^2(\bR^d)$. Hence the uniform boundedness of the operators $[\bar{T}_{\alpha}^n]$ as shown in \eqref{E:Asmptotic schrodinger-0.5} implies that $[\bar{T}_{\alpha}^n]q_n\to 0$ strongly in $L_{t,x}^{q_0}(\bR^{d+1})$. Meanwhile, by the local convergence Lemma \ref{L:Approximate local convergence}, we conclude $[\bar{T}_{\alpha}^n] r_n \to 0$ almost everywhere in $\bR^{d+1}$. Note that the strong convergence result \eqref{E:Asymptotic schrodinger-2} implies
	\[\lim_{n\to\infty} \l\|[\bar{T}_{\alpha}^n] V(t,x) - \int_{\bR^d} e^{ix\xi+it\l[\alpha |\xi|^2/2 +\alpha(\alpha-2)|\xi\xi_0|^2/2\r]} \widehat{V}(\xi) \ddd \xi\r\|_{L_{t,x}^{q_0}} =0.\]
	Thus as $n\to\infty$, applying the variant of Br\'{e}zis-Lieb lemma \cite[Lemma 3.1]{FLS2016} with
	\[\pi_n=\int_{\bR^d} e^{ix\xi+it\l[\alpha |\xi|^2/2 +\alpha(\alpha-2)|\xi\xi_0|^2/2\r]} \widehat{V}(\xi) \ddd \xi, \quad \rho_n=[\bar{T}_{\alpha}^n] r_n,\]
	we can obtain that
	\begin{equation} \label{E:Mass-1}
		\l\|[E_{\alpha}] f_n\r\|_{L_{t,x}^{q_0}}^{q_0} = \l\|[\bar{T}_{\alpha}^n] V\r\|_{L_{t,x}^{q_0}}^{q_0}+ \|[\bar{T}_{\alpha}^n]r_n \|_{L_{t,x}^{q_0}}^{q_0} +o(1) =\l\|[T_{\alpha}^n] V\r\|_{L_{t,x}^{q_0}}^{q_0}+ \|[T_{\alpha}^n]r_n \|_{L_{t,x}^{q_0}}^{q_0} +o(1).
	\end{equation}
	Due to the asymptotic Schr\"odinger behavior Lemma \ref{L:Asymptotic schrodinger} and Strichartz inequality \eqref{E:alpha-Strichartz}, we obtain the following asymptotic estimate
	\[\lim_{n\to\infty} \l\|[T_{\alpha}^n]V\r\|_{L_{t,x}^{q_0}} =\mathbf{a}_{d,\alpha}^{*} \l\|[e^{it\Delta}][\tilde{A}_0]V\r\|_{L_{t,x}^{q_0}} \leq \mathbf{a}_{d,\alpha}^{*} \mathbf{S}_d^{*} \|V\|_{L_x^2}.\]
	Here $[\tilde{A}_0]$ is an unitary operator on $L_x^2$ defined in \eqref{E:Approximate unitary operator}. On the other hand, a changing of variables deduces $[T_{\alpha}^n]r_n=[E_{\alpha}]\omega_n$ with $\widehat{\omega}_n:=\widehat{r}_n(\cdot-h_n\xi_n)$. Since $\widehat{[g_n]f_n}\rsa0$ and $\widehat{V}(\cdot -h_n\xi_n) \rsa 0$, as well as the Fourier transform is an automorphism operator on $L^2$, we obtain that $\omega_n\rsa 0$. This fact implies
	\[\limsup_{n\to\infty} \l\|[T_{\alpha}^n]r_n\r\|_{L_{t,x}^{q_0}} =\limsup_{n\to\infty} \|[E_{\alpha}]\omega_n\|_{L_{t,x}^{q_0}} \leq \mathbf{M}_{d,\alpha}^{*} \l(1-\|V\|_{L^2}^2\r)^{1/2}.\]
	Inserting these two limit estimates into \eqref{E:Mass-1} and taking limit $n\to \infty$ can deduce the following inequality
	\[\limsup_{n\to\infty} \|[E_{\alpha}] f_n\|_{L_{t,x}^{q_0}}^{q_0} \leq \l(\mathbf{a}_{d,\alpha}^{*} \mathbf{S}_d^{*}\r)^{q_0} \|V\|_{L^2}^{q_0} + \l(\mathbf{M}_{d,\alpha}^{*}\r)^{q_0} \l(1-\|V\|_{L^2}^2\r)^{q_0/2};\]
	and this inequality can be rewritten as
	\[\l(\mathbf{M}_{d,\alpha}^{*}\r)^{q_0} \l[1-\l(1-\|V\|_{L^2}^2\r)^{\frac{q_0}{2}}\r]- \l(\mathbf{a}_{d,\alpha}^{*} \mathbf{S}_d^{*}\r)^{q_0} \|V\|_{L^2}^{q_0} \leq \l(\mathbf{M}_{d,\alpha}^{*}\r)^{q_0}- \limsup_{n\to\infty} \|[E_{\alpha}] f_n\|_{L_{t,x}^{q_0}}^{q_0}.\]
	Since $q_0>2$ and $\|V\|_{L^2}\in [\tilde{\gamma},1]$, we conclude
	\[\l[\l(\mathbf{M}_{d,\alpha}^{*}\r)^{q_0}- \l(\mathbf{a}_{d,\alpha}^{*} \mathbf{S}_d^{*}\r)^{q_0}\r] \tilde{\gamma}^{q_0} \leq \l(\mathbf{M}_{d,\alpha}^{*}\r)^{q_0}- \limsup_{n\to\infty} \|[E_{\alpha}] f_n\|_{L_{t,x}^{q_0}}^{q_0}.\]
	Finally, taking supremum over all such sequences $(f_n)$ yields the desired conclusion.
\end{proof}

\section{Existence of extremals} \label{S:Existence of extremals}
Based on the precompactness Theorem \ref{T:Precompactness} established in the previous section, one direct way to show the existence of extremals for $\mathbf{M}_{d,\alpha}$ is to compare this sharp constant $\mathbf{M}_{d,\alpha}$ with the constant $\mathbf{S}_d^{*}$. In fact, as mentioned in the introduction, several works have been done in this direction, and similar strict-inequality phenomena have been appeared in other surfaces such as the sphere, see for example the articles \cite{BOQ2020,CS2012A&P,FLS2016,OQ2018,Shao2016}.

For our fractional surface situation, with the dimension $d=2$ and the index $\alpha$ in some region, Oliveira e Silva and Quilodr\'an \cite[Proposition 6.9]{OQ2018} have established the desired strict inequality \eqref{T:Precompactness-1} by applying some comparison principle for convolutions of certain singular measures. This result helps us obtain the existence of extremals for $\mathbf{M}_{2,\alpha}$ with the index $\alpha$ in this corresponding region. For the convenience of the reader, we recall the result \cite[Proposition 6.9]{OQ2018} in this article and then show the detailed proof for Theorem \ref{T:Existence}.

Here we recall the strict-inequality result of Oliveira e Silva and Quilodr\'an \cite[Proposition 6.9]{OQ2018}. For $d=2$, they have investigated the following convolution inequality
\[\l\|f\sigma_{\alpha} \ast f\sigma_{\alpha} \r\|_{L_{t,x}^2(\bR^3)} \leq \mathcal{Q}_{\alpha}^2 \|f\|_{L_x^2(\bR^2)},\]
where $\mathcal{Q}_{\alpha}$ is the sharp constant and the singular measure $\sigma_{\alpha}$ is given by
\[\ddd \sigma_{\alpha}(y,s):= \delta(s-|y|^{\alpha})|y|^{\frac{\alpha-2}{4}} \ddd y \ddd s, \quad (y,s)\in\bR^2\times \bR.\]
Then they have established the following proposition.
\begin{prop}[Proposition 6.9 of \cite{OQ2018}] \label{P:Strict inequaltiy}
	For the dimension $d=2$, there exists a constant $\alpha_0> 5$ such that for arbitrary $\alpha \in (2,\alpha_0)$ there holds
	\[\frac{\pi}{\alpha\sqrt{\alpha-1}} < \mathcal{Q}_{\alpha}^4 \leq \frac{\pi}{\alpha}.\]
\end{prop}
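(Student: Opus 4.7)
The plan is to establish the two bounds separately, exploiting the duality between the convolution quantity $\mathcal{Q}_\alpha^4$ and the $L^4$ Strichartz norm of $[E_\alpha]$. Via Plancherel one has
\[\mathcal{Q}_\alpha^4 = \sup_{f\neq 0}\frac{\|f\sigma_\alpha \ast f\sigma_\alpha\|_{L^2(\bR^3)}^2}{\|f\|_{L^2(\bR^2)}^4},\]
so the asymptotic threshold $\mathbf{a}_{2,\alpha}^* \mathbf{S}_2^*$ of Lemma \ref{L:Asymptotic schrodinger} translates, after using the 2D paraboloid sharp constant $\mathcal{Q}_2^4 = \pi/2$, into the lower threshold $\pi/(\alpha\sqrt{\alpha-1})$; the number $\pi/\alpha$ will emerge as the explicit coarea value at a critical point.

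For the upper bound $\mathcal{Q}_\alpha^4 \leq \pi/\alpha$, I would begin from the pointwise Cauchy--Schwarz
\[|f\sigma_\alpha \ast f\sigma_\alpha(\eta,\tau)|^2 \leq (\sigma_\alpha \ast \sigma_\alpha)(\eta,\tau) \cdot (|f|^2\sigma_\alpha \ast |f|^2\sigma_\alpha)(\eta,\tau).\]
A direct coarea computation gives $\sigma_\alpha \ast \sigma_\alpha(0,\tau) = (\pi/\alpha)(\tau/2)^{(2-\alpha)/(2\alpha)}$, so the weight $|y|^{(\alpha-2)/4}$ in $\sigma_\alpha$ is tuned exactly to expose the factor $\pi/\alpha$. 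Because this density blows up as $\tau$ approaches the boundary $\tau = |\eta|^\alpha/2^{\alpha-1}$ of the support, a naive $L^\infty$ bound fails; the remedy is to exploit the parabolic dilation symmetry $(y,s)\mapsto (\lambda y,\lambda^\alpha s)$ of $\sigma_\alpha$ to reduce to a normalized configuration, and then to absorb the boundary singularity of $\sigma_\alpha \ast \sigma_\alpha$ into a compensating vanishing in $|f|^2\sigma_\alpha \ast |f|^2\sigma_\alpha$ via the Cauchy--Schwarz product. At $\alpha = 2$ this collapses to $\sigma_2 \ast \sigma_2 \equiv \pi/2$ on its support, recovering the classical paraboloid value.

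For the lower bound $\pi/(\alpha\sqrt{\alpha-1}) < \mathcal{Q}_\alpha^4$, the non-strict version is immediate from the asymptotic Schr\"odinger behavior: testing against $\widehat{f}_n(\xi) = \widehat{\phi}(\xi - \xi_n)$ with $|\xi_n|\to\infty$ and $\phi$ the Gaussian extremizer for $\mathbf{S}_2^*$ (\cite{Foschi2007,HZ2006}) yields, via Lemma \ref{L:Asymptotic schrodinger}, $\mathcal{Q}_\alpha^4 \geq (\mathbf{a}_{2,\alpha}^*)^4\mathcal{Q}_2^4 = \pi/(\alpha\sqrt{\alpha-1})$. To upgrade to strictness, one should test with a \emph{finite-frequency} Gaussian-type profile, where the genuine curvature of the fractional surface (absent in its infinite-frequency Schr\"odinger asymptotics) contributes a strictly positive correction to $f\sigma_\alpha \ast f\sigma_\alpha$ on the region $\{\eta \neq 0\}$. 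For $\alpha$ near $2$ strictness then follows from continuity together with the fact that the threshold is attained only in a degenerate concentration limit.

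The main obstacle is precisely the quantitative extension: establishing the strict inequality uniformly on $(2,\alpha_0)$ with $\alpha_0>5$. This requires a comparison principle for convolutions of singular measures $\sigma_\alpha \ast \sigma_\alpha$ against $\sigma_2 \ast \sigma_2$, carefully tracking the weight asymptotics and the loss of Gaussian curvature as $\alpha$ grows. A perturbative continuity argument disposes of $\alpha$ close to $2$, but to push $\alpha_0$ past $5$ one has to combine explicit bilinear estimates with a rearrangement along the critical locus $\{\tau = |\eta|^\alpha/2^{\alpha-1}\}$; this quantitative comparison is the technical heart of \cite[Proposition 6.9]{OQ2018} and is the step I expect to be the most delicate.
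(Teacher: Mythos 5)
You should note first that the paper does not prove this proposition at all: it is imported verbatim as \cite[Proposition 6.9]{OQ2018}, so a genuine proof must reproduce the work of that paper, and your sketch does not. Concretely, your upper-bound argument has a weight-splitting error. Applying Cauchy--Schwarz with the same measure $\sigma_\alpha$ (weight $|y|^{(\alpha-2)/4}$ per factor) in both slots gives $\int |f|^2\sigma_\alpha \ast |f|^2\sigma_\alpha \,\ddd\eta\,\ddd\tau = \big(\int |f(y)|^2|y|^{(\alpha-2)/4}\ddd y\big)^2$, which is a \emph{weighted} norm, not $\|f\|_{L^2}^4$; and the kernel $\sigma_\alpha\ast\sigma_\alpha$ is neither bounded (your own formula $(\pi/\alpha)(\tau/2)^{(2-\alpha)/(2\alpha)}$ blows up as $\tau\to 0$ for $\alpha>2$) nor invariant under the dilation $(y,s)\mapsto(\lambda y,\lambda^\alpha s)$, so no ``normalized configuration'' can rescue an $L^\infty$ bound on it. The correct splitting puts the full weight $|y|^{(\alpha-2)/2}|\eta-y|^{(\alpha-2)/2}$ into the kernel factor: that autoconvolution is zero-homogeneous and equals exactly $\pi/\alpha$ on the axis $\eta=0$, and the first factor then integrates to $\|f\|_{L^2}^4$. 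Even after this repair one must still prove that the \emph{global} supremum of that kernel is attained on the axis (i.e.\ equals $\pi/\alpha$); this is a real step in \cite{OQ2018} that your outline never addresses.

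For the lower bound, the non-strict inequality $\mathcal{Q}_\alpha^4 \geq \pi/(\alpha\sqrt{\alpha-1})$ via the Schr\"odinger asymptotics (Lemma \ref{L:Asymptotic schrodinger} plus $\mathcal{Q}_2^4=\pi/2$) is fine, but the entire content of the proposition is the \emph{strict} inequality uniformly on $(2,\alpha_0)$ with $\alpha_0>5$. Your plan --- a curvature correction for finite-frequency Gaussian profiles plus continuity near $\alpha=2$ --- could at best yield strictness in a small neighborhood of $\alpha=2$, and you explicitly defer the quantitative range to \cite{OQ2018}; that deferred step \emph{is} the proposition. In \cite{OQ2018} the strict bound comes from testing with functions concentrating at a nonzero frequency, which bounds $\mathcal{Q}_\alpha^4$ from below by an explicit value of the (weight-adjusted) autoconvolution at a point of the doubled surface, followed by a careful comparison of that value with $\pi/(\alpha\sqrt{\alpha-1})$ producing $\alpha_0>5$. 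Without carrying out that computation, the statement remains unproved; within the present paper the honest ``proof'' is simply the citation.
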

Finally, we show the desired existence of extremals Theorem \ref{T:Existence} and complete this section.
\begin{proof}[\textbf{Proof of Theorem \ref{T:Existence}}]
	Let us first recall the classical result that up to symmetries Gaussians are the only extremals for $\mathbf{S}_d^{*}$ when $d=\{1,2\}$. Hence, by direct computation, one can obtain the following sharp constants
	\[\mathbf{S}_1^{*}= 12^{-1/12},\quad \mathbf{S}_2^{*}= 2^{-1/2}.\]
	Then applying the precompactness Theorem \ref{T:Precompactness}, we obtain that the extremals for $\mathbf{M}_{2,\alpha}$ must exist if we can show the following strict inequality
	\begin{equation} \label{E:Existence-1}
		\mathbf{M}_{2,\alpha}^4 > (2\alpha\sqrt{\alpha-1})^{-1}.
	\end{equation}
	Denoting the space-time Fourier transform by $\mathscr{F}_{t,x}$ with dimension $d=2$, one can observe that
	\[[D^{\frac{\alpha-2}{4}}][e^{it|\nabla|^{\alpha}}] \mathscr{F}^{-1} f(t,x)= (2\pi)^{-2} \mathscr{F}_{t,x} (f\sigma_{\alpha}) (-t,-x).\]
	Hence for the sharp constants $\mathbf{M}_{2,\alpha}$ and $\mathcal{Q}_{\alpha}$, by the Plancherel theorem, there holds
	\[(2\pi)\mathbf{M}_{2,\alpha}^4 = \mathcal{Q}_{\alpha}^4.\]
	Then applying the aforementioned Proposition \ref{P:Strict inequaltiy}, we obtain that the desired strict inequality \eqref{E:Existence-1} holds for $\alpha\in (2,\alpha_0)$ with some index $\alpha_0>5$. This completes the proof.
\end{proof}

\bigskip
\subsection*{Acknowledgments}
The authors would like to thank Ren\'{e} Quilodr\'{a}n for bring their attention to the result \cite[Proposition 6.9]{OQ2018}, and thank Shuanglin Shao for valuable conversations. The first author acknowledges the support from University of Chinese Academy of Sciences Joint Training Program. And this work is completed during the first author's visit to the University of Kansas whose hospitality is also appreciated.

\bigskip
\appendix
\numberwithin{equation}{section}
\renewcommand{\theequation}{\thesection.\arabic{equation}}

\section{One geometric consequence}\label{S:A geometric result}
\noindent For a parameter $\eta$, we define the following function
\[F_{\eta}(\xi)=|\xi|^{\alpha}+|\eta|^{\alpha}-|\xi+\eta|^{\alpha}/2^{\alpha-1}\]
on the set $E_{\eta}:=\{\xi\in\bR^d: |\xi|\geq |\eta|\}$. This appendix is devoted to showing a geometric result related to this function. Roughly speaking, the Proposition \ref{P:Angle decomposition} below states that if the angle between $\xi$ and $\eta$ is small, then the Hessian matrix of $F_{\eta}(\xi)$ is positive-definite which means the corresponding surface has positive Gaussian curvature. Hence, in this situation, the multi-variable Taylor's theorem will give some nice displacement estimates around the critical points. These estimates are crucial when we establish the quasi-orthogonality Lemma \ref{L:Quasi-orthogonality} and apply the bilinear restriction theory. We first introduce some notations.

For two positive semi-definite matrices $A$ and $B$, we write $A\geq B$ if $A-B\geq0$ which means $A-B$ is positive semi-definite. Similarly for $A>B$, $A\leq B$ and $A<B$. Meanwhile the vectors in $\bR^d$ are viewed as row vectors and the notation $\xi^T$ denotes the transpose of $\xi$. For two vectors $\xi$ and $\eta$ in $\bR^d$, we use the notation $\theta(\xi,\eta)$ to denote the angle between $\xi$ and $\eta$. In addition, $E_d$ is the $d\times d$ identity matrix. Then our result is as follows.
\begin{prop}\label{P:Angle decomposition}
	There exists a number $\bar{K}_{d,\alpha}\in\bZ_{+}$ such that the unit cube-annular $\mathcal{A}_1$ can be divided into $\bar{K}_{d,\alpha}$ parts $\mathcal{A}_1=\bigcup_{j=1}^{\bar{K}_{d,\alpha}} \mathcal{R}_j$, and in each part $\mathcal{R}_j$ we have the following properties:
	\[c_1 E_d\leq \mathrm{Hess}F_{\eta}(\xi) \leq c_2 E_d\]
	holds for all pairs $(\xi,\eta)\in \mathcal{R}_j\times \mathcal{R}_j$ with $\xi \in E_{\eta}$.
\end{prop}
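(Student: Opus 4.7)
The plan is to reduce Proposition \ref{P:Angle decomposition} to a continuity-and-compactness argument anchored at the diagonal $\xi = \eta$, where the Hessian of $F_\eta$ admits an explicit, manifestly positive-definite expression. Using the standard identity $\partial_i\partial_j |\zeta|^{\alpha} = \alpha|\zeta|^{\alpha-2}\delta_{ij} + \alpha(\alpha-2)|\zeta|^{\alpha-4}\zeta_i\zeta_j$, I would first write
\[
H(\xi,\eta):=\mathrm{Hess}_\xi F_\eta(\xi) = \alpha|\xi|^{\alpha-2}E_d + \alpha(\alpha-2)|\xi|^{\alpha-4}\xi^T\xi - \frac{\alpha}{2^{\alpha-1}}|\xi+\eta|^{\alpha-2}E_d - \frac{\alpha(\alpha-2)}{2^{\alpha-1}}|\xi+\eta|^{\alpha-4}(\xi+\eta)^T(\xi+\eta).
\]
This is jointly continuous in $(\xi,\eta)$ on the open set $\{\xi\neq 0,\ \xi+\eta\neq 0\}\subset \bR^d\times \bR^d$, which contains the compact product $\overline{\mathcal{A}_1}\times\overline{\mathcal{A}_1}$ (note $\|\zeta\|_{\max}\in[1,2]$ on $\overline{\mathcal{A}_1}$, hence $0\notin\overline{\mathcal{A}_1}$ and $\xi+\eta\neq 0$ for any two such points).

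Next I would evaluate at the diagonal. Using $|2\eta|^{\alpha-2}=2^{\alpha-2}|\eta|^{\alpha-2}$ and $(2\eta)^T(2\eta)=4\eta^T\eta$, the four contributions telescope to
\[
H(\eta,\eta) = \tfrac{\alpha}{2}|\eta|^{\alpha-2}E_d + \tfrac{\alpha(\alpha-2)}{2}|\eta|^{\alpha-4}\eta^T\eta.
\]
Its eigenvalues are $\tfrac{\alpha}{2}|\eta|^{\alpha-2}$ in every direction orthogonal to $\eta$ and $\tfrac{\alpha(\alpha-1)}{2}|\eta|^{\alpha-2}$ along $\eta$. Both are strictly positive for $\alpha\geq 2$, and as $|\eta|$ ranges in a compact subinterval of $(0,\infty)$ on $\overline{\mathcal{A}_1}$, I obtain uniform two-sided bounds $\tilde c_1 E_d \leq H(\eta,\eta)\leq \tilde c_2 E_d$ with $\tilde c_1,\tilde c_2$ depending only on $(d,\alpha)$.

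Finally I would upgrade this diagonal estimate to a neighborhood estimate by uniform continuity of $H$ on the compact set $\overline{\mathcal{A}_1}\times\overline{\mathcal{A}_1}$: there exists $\delta=\delta(d,\alpha)>0$ such that $|\xi-\eta|<\delta$ forces $\tfrac{\tilde c_1}{2}E_d \leq H(\xi,\eta)\leq 2\tilde c_2 E_d$. Partitioning $\mathcal{A}_1$ into finitely many Borel pieces $\mathcal{R}_1,\ldots,\mathcal{R}_{\bar K_{d,\alpha}}$ of diameter less than $\delta$ (for instance by covering $\overline{\mathcal{A}_1}$ with finitely many $\delta/2$-balls and disjointifying) forces $(\xi,\eta)\in\mathcal{R}_j\times\mathcal{R}_j \Rightarrow |\xi-\eta|<\delta$, and then setting $c_1:=\tilde c_1/2$ and $c_2:=2\tilde c_2$ yields the required uniform two-sided bounds.

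I do not anticipate a serious obstacle. The only computation requiring vigilance is the telescoping at the diagonal in the second step, where one must verify that subtracting the $|\xi+\eta|$-contributions from the $|\xi|$-contributions leaves a positive residue — and this is immediate from $2^{\alpha-2}/2^{\alpha-1}=1/2$ together with $\alpha(\alpha-2)\geq 0$ for $\alpha\geq 2$. The hypothesis $\xi\in E_\eta$ is in fact never used; what the argument delivers is the (slightly stronger) unrestricted bound on all of $\mathcal{R}_j\times\mathcal{R}_j$.
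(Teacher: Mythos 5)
Your argument is internally consistent and does prove the statement exactly as written: the diagonal computation of $\mathrm{Hess}\,F_{\eta}$ at $\xi=\eta$ is correct, and the uniform-continuity perturbation plus a partition of $\mathcal{A}_1$ into cells of diameter less than $\delta$ gives two-sided bounds for same-cell pairs. (One slip: the claim that $\xi+\eta\neq0$ for all $\xi,\eta\in\overline{\mathcal{A}_1}$ is false, e.g.\ $\eta=-\xi$; this is harmless, both because for $\alpha\geq2$ the Hessian formula extends continuously across $\xi+\eta=0$ and because you only ever work near the diagonal, where $|\xi+\eta|\geq 2-\delta$.)

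However, your route departs from the paper's in a way that matters for how the proposition is used, and your own closing observation --- that the hypothesis $\xi\in E_{\eta}$ is never needed --- is the symptom. In the paper the cells $\mathcal{R}_j$ are \emph{angular sectors}: the unit sphere is cut into finitely many pieces and $\mathcal{A}_1$ is decomposed accordingly, so a single $\mathcal{R}_j$ contains pairs $(\xi,\eta)$ whose radii differ by a factor up to about $2\sqrt{d}$ and whose separation is comparable to $1$, provided only that $\theta(\xi,\eta)$ is small. The paper's proof then uses $|\xi|\geq|\eta|$ in an essential way, via $|\xi+\eta|^{\alpha-2}\leq(|\xi|+|\eta|)^{\alpha-2}\leq 2^{\alpha-2}|\xi|^{\alpha-2}$, to reduce the lower bound to the purely angular condition \eqref{E:Angle decomposition-1}. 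This radial uniformity is precisely what Remark \ref{R:Angle decomposition} records (small angle plus $|\xi|\geq|\eta|$ implies the Hessian bounds) and what condition \eqref{E:Angle condition} and the key estimate \eqref{E:Quasi-orthogonality-7} in Lemma \ref{L:Quasi-orthogonality} rely on: there $\xi\in\tau$ and $\xi'\in\tau'$ with $\tau\sim\tau'$ can be separated by a distance comparable to $1$ (for instance when $r\sim 2^{N_{d,\alpha}}$), so they never lie in a common cell of small diameter. With your partition, the angle $\bar{\theta}_0$ extracted from the cells carries no information about such pairs, so the remark no longer follows ``by the proof'' and the downstream application breaks. In short: you verify the literal statement by a neighborhood-of-the-diagonal compactness argument, whereas the paper needs, and proves, the stronger radially uniform version in which the decomposition is angular and the restriction $\xi\in E_{\eta}$ does real work; to serve the paper you would need to replace the small-diameter cells by sectors and control $\mathrm{Hess}\,F_{\eta}(\xi)$ for all $|\xi|\geq|\eta|$ in a sector, which is exactly the paper's argument.
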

\begin{proof}[\textbf{Proof of Proposition \ref{P:Angle decomposition}}]
	Firstly it is not hard to see that, by dividing the unit sphere $\bS^{d-1}$ into $\bar{K}_{d,\alpha}$ disjoint parts and then decomposing $\mathcal{A}_1$ accordingly, we can achieve that for every $(\xi,\eta)\in \mathcal{R}_j\times \mathcal{R}_j$ and every $y\in\bR^d$ there holds
	\begin{equation}\label{E:Angle decomposition-1}
		\l|\cos^2\theta(\xi,y)-\cos^2\theta(\xi+\eta,y)\r|\leq \frac{1}{2\alpha-4}.
	\end{equation}
	Indeed, since $|\theta(\xi+\eta,y)-\theta(\xi,y)|\leq \theta(\xi+\eta,\eta) \leq \theta(\xi,\eta)$, the continuity of trigonometric functions gives the existence of such number $\bar{K}_{d,\alpha}$. 
	
	A direct computation shows
	\[\mathrm{Hess} F_{\eta}(\xi)=\alpha\l[|\xi|^{\alpha-2}-\frac{|\xi+\eta|^{\alpha-2}}{2^{\alpha-1}}\r]E_d +\alpha(\alpha-2)\l[|\xi|^{\alpha-4}\xi^T\xi-\frac{|\xi+\eta|^{\alpha-4}(\xi+\eta)^T(\xi+\eta)}{2^{\alpha-1}}\r].\]
	Therefore the inequality $|\xi+\eta|^{\alpha-2}\leq (|\xi|+|\eta|)^{\alpha-2} \leq 2^{\alpha-2}|\xi|^{\alpha-2}$ implies the following estimate
	\[\mathrm{Hess} F_{\eta}(\xi) \geq \frac{\alpha|\xi|^{\alpha-2}}{2}\l[E_d+(\alpha-2)\frac{\xi^T\xi}{|\xi|^2}-(\alpha-2) \frac{(\xi+\eta)^T(\xi+\eta)}{|\xi+\eta|^2}\r].\]
	We aim to show that for every $y\in\bR^d$ there holds
	\[c_3|y|^2\leq y\l[E_d+(\alpha-2)\frac{\xi^T\xi}{|\xi|^2}-(\alpha-2) \frac{(\xi+\eta)^T(\xi+\eta)}{|\xi+\eta|^2}\r]y^T,\]
	where $c_3$ is independent of $y$. This fact will give the desired constant $c_1$ since $|\xi|\sim 1$ in our situation. By homogeneity we may assume $y\in\bS^{d-1}$. Then we conclude that
	\begin{align*}
		&y\l[E_d+(\alpha-2)\frac{\xi^T\xi}{|\xi|^2}-(\alpha-2) \frac{(\xi+\eta)^T(\xi+\eta)}{|\xi+\eta|^2}\r]y^T \\
		&=1+(\alpha-2)\l[\frac{|\xi y^T|^2}{|\xi|^2}-\frac{|(\xi+\eta)y^T|^2}{|\xi+\eta|^2}\r] \\
		&\geq 1-(\alpha-2)\l|\cos^2\theta(\xi,y)-\cos^2\theta(\xi+\eta,y)\r| \\
		&\geq 1/2,
	\end{align*}
	where in the last inequality we have used the condition \eqref{E:Angle decomposition-1}. Finally, since $F_{\eta}(\xi)$ is smooth and the domain is bounded, the existence of $c_2$ is obvious.
\end{proof}
\begin{remark}\label{R:Angle decomposition}
	By the proof, we can define
	\[\bar{\theta}_j:=\sup\{\theta(\xi,\eta): (\xi,\eta)\in\mathcal{R}_j\times\mathcal{R}_j, |\xi|\geq |\eta|\}, \quad \bar{\theta}_0:=\min\{\bar{\theta}_j: j=1,2,\ldots, \bar{K}_{d,\alpha}\}.\]
	Using these notations, Proposition \ref{P:Angle decomposition} implies that if $\theta(\xi,\eta)\leq \bar{\theta}_0$ with $|\xi|\geq |\eta|$, then there holds
	\[c_1 E_d \leq\mathrm{Hess} F_{\eta}(\xi)\leq c_2 E_d.\]
\end{remark}

\end{document}